\pdfoutput=1
\documentclass[12pt]{article}

\usepackage[utf8]{inputenc}
\usepackage[T1]{fontenc}
\usepackage[english]{babel}

\usepackage{amsmath, amssymb, amsfonts, amsthm}
\usepackage{amscd}
\usepackage{bm}
\usepackage{cancel}

\usepackage{graphicx}
\graphicspath{{./}{./graphics/}}
\usepackage{tikz}
\usetikzlibrary{arrows.meta, positioning, shapes.geometric}
\usepackage{fullpage}

\usepackage{xcolor}
\usepackage{enumitem}
\usepackage{array, booktabs, multirow}
\usepackage{fancybox}
\usepackage[normalem]{ulem}
\usepackage{lipsum}
\usepackage{appendix}
\usepackage{comment}

\usepackage{authblk}

\usepackage{caption}
\usepackage{subcaption}
\usepackage{algorithm}
\usepackage{algpseudocode}
\usepackage{listings}
\usepackage[authoryear,square]{natbib}
\let\cite\citep

\usepackage{hyperref}
\hypersetup{
	colorlinks=true,
	linkcolor=blue,
	citecolor=blue,
	urlcolor=teal
}
\usepackage{cleveref} 

\newtheorem{theorem}{Theorem}[section]
\newtheorem{lemma}[theorem]{Lemma}
\newtheorem{proposition}[theorem]{Proposition}
\newtheorem{corollary}[theorem]{Corollary}

\theoremstyle{definition}
\newtheorem{definition}[theorem]{Definition}
\newtheorem{example}[theorem]{Example}
\newtheorem{remark}[theorem]{Remark}

\newtheorem{method}[theorem]{Method}

\DeclareMathOperator{\supp}{supp}
\DeclareMathOperator{\Span}{span}
\DeclareMathOperator{\diam}{diam}

\DeclareMathOperator\parent{\texttt{parent}}
\DeclareMathOperator\children{\texttt{children}}

\DeclareMathOperator{\lev}{lev}
\DeclareMathOperator{\dist}{dist}
\DeclareMathOperator{\core}{core}
\DeclareMathOperator{\Id}{Id}

\newcommand\QQ{\mathcal{Q}} 
\newcommand\QQQ{\mathbb{Q}} 
\newcommand\BB{\mathcal{B}}
\newcommand\NN{\mathbb{N}}

\newcommand\AAA{\mathcal{A}}
\newcommand\HH{\mathcal{H}}
\newcommand\MM{\mathcal{M}} 
\newcommand\MMM{\mathbb{M}} 

\newcommand\EE{\mathcal{E}}

\newcommand{\Muno}[1]{\mathcal{M}^{\scriptstyle \mathrm{OPT}}_{#1}}
\newcommand{\Mdos}[1]{\mathcal{M}^{\scriptstyle \mathrm{SUBOPT}}_{#1}}
\newcommand{\Mtres}[1]{\mathcal{M}^{\scriptstyle \mathrm{AUX}}_{#1}}
\newcommand{\Mcuatro}[1]{\mathcal{M}^{\scriptstyle \mathrm{ADM}}_{#1}}

\newcommand{\MMMuno}[1]{\mathbb{M}^{\scriptstyle \mathrm{OPT}}_{#1}}
\newcommand{\MMMdos}[1]{\mathbb{M}^{\scriptstyle \mathrm{SUBOPT}}_{#1}}

\title{\bf Local power of approximation in hierarchical spline spaces on weakly admissible meshes}

\author{Gustavo A. Fernandez Lezcano\thanks{Corresponding author: Gustavo A. Fernandez Lezcano (gafernandezl@fiq.unl.edu.ar)} , Eduardo M.~Garau \& Bárbara Ivaniszyn}

\affil{\small Universidad Nacional del Litoral, \\
	Consejo Nacional de Investigaciones Cient\'ificas y T\'ecnicas, \\
	FIQ, Santa Fe, Argentina}

\begin{document}
	
	\maketitle

	\begin{abstract}
		
		We study local approximation properties in hierarchical spline spaces through a twofold approach. First, we design and analyze a robust adaptive refinement algorithm to construct locally graded meshes. Second, we establish rigorous stability and approximation results using computationally efficient quasi-interpolation operators. The primary contribution is the analysis of \emph{weakly admissible hierarchical meshes}. Our framework relies on strictly nested cell sets that locally reproduce the full tensor-product spline space at each level. Theoretical and numerical results demonstrate that this intuitive approach is mathematically elegant and outperforms existing adaptive refinement strategies in various practical scenarios.
		
	\end{abstract}

\begin{quote}\small
		{\bf Keywords:}
	adaptive refinement, local approximation, quasi-interpolation, hierarchical splines
\end{quote}

		\section{Introduction}
		
		The theory of spline approximation plays a central role in modern numerical analysis. On the one hand, spline spaces allow for the representation of smooth functions using locally supported and stable bases; on the other hand, their approximation properties can be rigorously studied within the framework of Sobolev spaces. In numerous applications---particularly in the formulation of finite element methods---explicit local error estimates are required for linear operators that project or approximate functions in spline spaces. Among these operators, quasi-interpolants are widely recognized as effective and efficient approximation tools~\cite{Schumi,LMS18}. They provide significant advantages by combining simplicity in construction with flexibility in adapting to singular data, making them highly versatile for various computational applications.
		
		This work focuses on refinement strategies within hierarchical spline spaces and the analysis of approximation errors using quasi-interpolation. Hierarchical B-splines~\cite{K98,VGJS2011} are defined by a hierarchy of graded meshes that represent different levels of refinement, enabling local approximation.
		
		To obtain optimal error estimates via quasi-interpolants~\cite{MS15,S17}, existing theory imposes restrictions on the hierarchical mesh. One such condition is the standard notion of admissibility~\cite{BGi15}, which controls, independently of the mesh depth, the number of different levels of THB-spline basis functions that do not vanish on a given cell. The same concept was introduced in~\cite{GHP2017} for HB-splines, limiting the number of levels to two. A condition that guarantees these properties is the so-called strict admissibility, which allows one to work directly with the mesh structure (hierarchical subdomains) rather than involving the THB-spline or HB-spline functions directly. 
		
		Previous work~\cite{BGV2018} established results relating admissible and strictly admissible meshes of class $m$ for both THB-splines and HB-splines. Furthermore, they presented an adaptive refinement algorithm that takes as input a mesh satisfying strict admissibility of class $m$ alongside a set of marked elements of interest, and outputs a refined mesh preserving this property. Additionally, a refinement complexity result for this algorithm was obtained in~\cite{BGMP2016}, which controls the number of newly created cells relative to the number of marked cells during an iterative procedure.
		
		This work has a twofold purpose. First, we design a robust adaptive refinement algorithm satisfying a complexity estimate, aimed at enriching the space through the construction of locally graded meshes wherever greater approximation power is required. In particular, we consider a class of meshes that we term \textit{weakly admissible}, whose grading requires a nesting of cell sets from successive levels where the hierarchical space locally reproduces the tensor-product space of each level. This class of meshes, although without a specific name at the time, was first considered in~\cite{K98} to prove pointwise approximation properties through the construction of a quasi-interpolant. It was later revisited in~\cite{BG15}, where local approximation properties in $L^q$ were established. As a second objective, this article extends these results to Sobolev norms, demonstrating that this new class of meshes allows for a much more flexible hierarchical structure while maintaining sufficient control to develop a robust local approximation theory.
		
		The remainder of this article is organized as follows. \Cref{S:WAHM} briefly reviews the notion of hierarchical spline spaces and defines and characterizes the concept of weakly admissible hierarchical meshes. Section~\ref{S:algorithm} presents the design and implementation of a refinement algorithm that preserves the mesh structure, along with the corresponding complexity result analogous to the one established in~\cite{BGMP2016}. Section~\ref{S:QIs} establishes optimal local error estimates in higher-order Sobolev norms for a multilevel quasi-interpolant over weakly admissible meshes. Finally, Section~\ref{S:tests} provides an experimental analysis of refinement strategies through several numerical tests.

		\section{Weakly admissible meshes for hierarchical splines}\label{S:WAHM}

		Let $\Omega = [0,1]^d$ be a closed hyper-rectangle in $\mathbb{R}^d$. We construct a hierarchy of uniform tensor-product meshes $\{\mathcal{Q}_{\ell}\}_{\ell \in \mathbb{N}_0}$ over $\Omega$ via dyadic refinement. Specifically, a cell $Q \in \mathcal{Q}_{\ell}$ at level $\ell$ is a hypercube $Q = I_1 \times \dots \times I_d$ of side length $h_{\ell} = 2^{-\ell}$. The mesh $\mathcal{Q}_{\ell}$ is obtained by bisecting each edge of the cells in $\mathcal{Q}_{\ell-1}$, yielding the relation $h_{\ell} = h_{\ell-1}/2$. Associated with this nested mesh sequence is a corresponding sequence of nested $d$-variate tensor-product spline spaces $\{\mathcal{S}_{\ell}\}_{\ell \in \mathbb{N}_0}$, satisfying $\mathcal{S}_{\ell} \subset \mathcal{S}_{\ell+1}$ for all $\ell \geq 0$. Each space $\mathcal{S}_{\ell}$ is spanned by a tensor-product B-spline basis $\mathcal{B}_{\ell}$ of maximum smoothness and uniform polynomial degree $\mathbf{p} = (p, \dots, p)$ (cf.~\cite{Schumi}); although different degrees could be considered in each coordinate direction, they are assumed equal throughout to simplify the presentation.
		
		\begin{definition}[Hierarchy of subdomains]\label{D: hierarchy of subdomains}
			Let $n \in \mathbb{N}$. We say that $\mathbf{\Omega}_{n}=\left\lbrace \Omega_{0}, \Omega_{1},...,\Omega_{n-1},\Omega_n\right\rbrace $ is a hierarchy of subdomains of $\Omega$ of depth $n$ if:
			\begin{itemize}
				\item $\Omega_{\ell}$ is union of cells of level $\ell-1$, for $\ell=1,2,...,n-1.$
				\item $\Omega=\Omega_{0} \supset \Omega_{1} \supset ... \supset \Omega_{n-1} \supset \Omega_{n}= \emptyset$.
			\end{itemize}
		\end{definition}
		Given a hierarchy of subdomains $\mathbf{\Omega}_{n}$, the corresponding hierarchical mesh $\QQQ=\QQQ(\mathbf{\Omega}_{n})$  is given by
		$    \QQQ:=\overset{n-1}{\underset{\ell=0}{\bigcup}} \left\lbrace Q \in \mathcal{Q}_{\ell} \;| \; Q \subset \Omega_{\ell} \wedge Q \not\subset \Omega_{\ell+1} \right\rbrace.$ We say that
		$Q$ is an \emph{active cell} of level $\ell$ if $Q \in \QQQ \cap \mathcal{Q}_{\ell}$. Additionally, we define the set of HB-splines $\mathcal{H}$ as follows 
		\begin{equation*}
			\mathcal{H}:=\overset{n-1}{\underset{\ell=0}{\bigcup}} \left\lbrace \beta \in \mathcal{B}_{\ell} \;| \;\supp\beta \subset \Omega_{\ell} \wedge 
			\supp\beta \not\subset \Omega_{\ell+1} \right\rbrace,
		\end{equation*}
		where $\supp\beta$ denotes the intersection of the support of $\beta$ with $\Omega_0$. We say that $\beta$ is an \emph{active function} of level $\ell$ if $\beta \in\mathcal{H}\cap\mathcal{B}_{\ell}$.

		\begin{definition}[Hierarchical spline space]
			The space of hierarchical splines in $\Omega$ is given by $\mathrm{span}\mathcal{H}$.
		\end{definition}
		For a more detailed discussion on hierarchical splines we refer to~\cite{GJS14}.

		%
			
			\smallskip

According to the existing literature, the development of an adaptivity theory for isogeometric methods based on hierarchical splines typically relies on enforcing suitable mesh grading conditions to govern local refinement. The concept of admissible meshes~\cite{BGi15} for truncated hierarchical B-splines (THB-splines~\cite{GJS2012}) plays a central role in this framework.

\begin{definition}[Admissible hierarchical mesh]\label{Def: Malla Admmisible}
	A mesh $\mathcal{Q}$ is \textit{admissible of class} $m\geq 2$ if the THB-splines taking non-zero values on any cell $Q\in \mathcal{Q}$ belong to at most $m$ successive levels.
\end{definition}

Restricting the hierarchy in this manner limits the overlap of basis functions across different levels. A sufficient condition for admissibility can be established using the following auxiliary domains.

For $\ell=0,1,\dots,n-1$, let $\omega_\ell$ be the union of the elements at level $\ell$ whose support extension is fully contained within $\Omega_\ell$; that is,
\begin{equation*}
	\omega_\ell:=\bigcup_{\substack{Q\in\QQ_\ell\\ \tilde Q\subset\Omega_\ell}} Q,
\end{equation*}
where $\tilde{Q} := \bigcup_{\substack{\beta\in\BB_\ell\\\supp\beta\supset Q}}\supp\beta$ denotes the support extension of $Q$. Equivalently, $\omega_\ell$ is the largest subset of $\Omega_\ell$ such that the basis functions $\{\beta\in\BB_\ell\mid \supp\beta\subset\Omega_{\ell}\}$ span the entire tensor-product space of level $\ell$ restricted to $\omega_\ell$.

\begin{definition}[Strictly admissible hierarchical mesh]\label{Def: Malla Estrictamente Admmisible}
	A mesh $\QQQ$ is \emph{strictly admissible of class} $m\geq 2$ if $\Omega_\ell \subset \omega_{\ell-m+1}$ holds for all $\ell=m, m+1, \dots, n-1$.
\end{definition}

Note that strict admissibility of class $m$ implies admissibility of class $m$ (see, e.g.,~\cite[Proposition 1]{BGV2018}). Relaxing the constraints of strictly admissible meshes of class $2$ leads to the following broader category:

\begin{definition}[Weakly admissible hierarchical mesh]\label{Def: Debilmente Admisible}
	A mesh $\QQQ$ is a \emph{weakly admissible hierarchical mesh} (WAHM) if $\omega_\ell \subset \omega_{\ell-1}$ holds for all $\ell=1, \dots, n-1$.
\end{definition}

Meshes of this type were initially introduced in~\cite{Kraft}—albeit without this specific nomenclature—to establish $L^\infty$ approximation properties for hierarchical spline spaces via the construction of a quasi-interpolant. They were later revisited in~\cite{BG15} to analyze local $L^q$ approximation properties. Section~\ref{S:QIs} advances this theoretical framework by deriving new results specifically tailored to weakly admissible meshes.

\begin{remark}\label{Ob: EA2 es DA}
	While it is straightforward to verify that any strictly admissible mesh of class $2$ is also weakly admissible, the converse is generally false (see~\cite[Remark 4.10 and Figure 3]{BG15}).
\end{remark}

The primary objective of this section is to thoroughly explore and characterize the concept of weakly admissible meshes.

\subsection{Some properties of meshes in $\mathbb{R}^d$}\label{subsec: propiedades de mallas infinitas}

In order to obtain a characterization of weakly admissible meshes, this section presents the study of a family of tensor-product meshes, denoted by $\{\QQ_\ell^\infty\}_{\ell\in\mathbb{N}}$. Here, we establish properties involving different neighborhoods for a given cell $Q\in\QQ_\ell^\infty$.

For each level $\ell\in\mathbb{N}$, we consider the mesh $\QQ_\ell^\infty$ as the one whose cells have their vertices in the set $2^{-\ell}\mathbb{Z}^d$, where $d\in\mathbb{N}$. We also assume a polynomial degree vector $\mathbf{p} = (p,\dots,p)$ in each direction and maximum smoothness in the whole domain. Based on the (univariate) cardinal B-spline $M_p$ of degree $p$, the B-spline function associated to the point $\mathbf{k} = (k_1,\dots,k_d) \in \mathbb{Z}^d$ at level $\ell$ is given by \(B^\ell_\mathbf{k}(x) =
B^\ell_{k_1,\dots,k_d}(x_1,\dots,x_d) =
M_{p}(2^\ell x_1-k_1)\dots M_{p}(2^\ell x_d-k_d)\),
with support
\begin{equation}\label{supp de B-splines d dimensional}
	\supp B^\ell_\mathbf{k} =
	[\tfrac{k_1}{2^\ell},\tfrac{k_1+p+1}{2^\ell}]
	\times \dots \times [\tfrac{k_d}{2^\ell},\tfrac{k_d+p+1}{2^\ell}].
\end{equation}
Thus, $\mathcal{B}^\infty_{\mathbf{p},\ell} = \{B^\ell_\mathbf{k} \mid \mathbf{k}\in\mathbb{Z}^d\}$ is the set of all B-spline functions of level $\ell$. Furthermore, $\mathbf{B}^\infty_{\mathbf{p},\ell} = \{\supp \beta \mid \beta\in \mathcal{B}^\infty_{\mathbf{p},\ell}\}$ is the set of the supports of all B-splines at level $\ell$. 
\begin{remark}\label{Obs: repreesentacion forma p}
	From \eqref{supp de B-splines d dimensional}, it follows that the elements of $\mathbf{B}^\infty_{\mathbf{p},\ell}$ are $d$-dimensional cubes formed by $(p+1)\times(p+1)\times\dots\times(p+1)$ cells of level $\ell$.
\end{remark}
In addition, the support extension $\widehat{Q}$ of a cell $Q\in\QQ^\infty_{\ell}$ with respect to $\BB^\infty_{\mathbf{p},\ell}$ is defined as
\begin{equation}\label{Eq: soperte extendido infinito de una celda}
	\widehat{Q} =\displaystyle\bigcup_{\substack{\beta\in\BB^\infty_{\mathbf{p},\ell}\\Q\subset\supp\beta}}\supp\beta.
\end{equation}

We now give some key definitions in order to obtain the results in this section.

\begin{definition}[Parent of a cell]
	Let $\ell\in\NN$ and $Q\in\QQ^\infty_\ell$. For $0\le k<\ell$, we define $\parent_k(Q)$ as the unique cell in $\QQ^\infty_{k}$ satisfying $Q\subset\parent_k(Q)$. In the particular case where $k=\ell-1$, we denote $\parent_{\ell-1}(Q) = \parent(Q)$.
\end{definition}

\begin{definition}[Children of a cell]
	Let $\ell\in\NN$ and $Q\in\QQ^\infty_\ell$. For $k>\ell$, we define $\children_k(Q)$ as the set of cells $Q'\in\QQ^\infty_{k}$ satisfying $Q'\subset Q$. In the particular case where $k = \ell+1$, we denote $\children_{\ell+1}(Q) = \children(Q)$.
\end{definition}

\begin{definition}[$\mathbf{p}$-form of level $\ell$]\label{Def: Formas_p_nivel_l}
	Given a subset $\Omega_0$ of $\mathbb{R}^d$ formed by cells of $\QQ_0^\infty$, we call the elements of the following set a \emph{$\mathbf{p}$-form of level $\ell$}:
	\begin{equation*}\label{Eq: Formas p nivel l}
		\mathbf{F}^\infty_{\mathbf{p},\ell} (\Omega_0) = \{C\in \mathbf{B}^\infty_{\mathbf{p},\ell} \mid \exists \:Q\in\QQ^\infty_{\ell+1} \text{ such that } Q\subset\Omega_0, \widehat{Q} \subset C\},
	\end{equation*}
	where $\widehat{Q}$ is given by \eqref{Eq: soperte extendido infinito de una celda}. For simplicity, we denote the set $\mathbf{F}^\infty_{\mathbf{p},\ell} (\Omega_0)$ as $\mathbf{F}^\infty_{\mathbf{p},\ell}$.
\end{definition}

\begin{remark} \label{Formas_p_interpretacion}
	Note that not all supports of the functions in $\mathcal{B}^\infty_{\mathbf{p},\ell}$ constitute a $\mathbf{p}$-form (see Figure \ref{Fig: Supp de B-spline y Formas p}).
		
	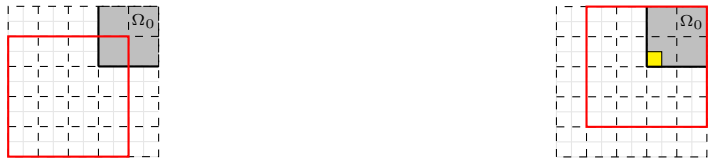
\begin{figure}[ht]
	\centering
	\begin{minipage}{0.4\textwidth}
		\centering
		\begin{tikzpicture}[scale=2]
				
				\foreach \x in {0,1/10,3/10,5/10,7/10,9/10}{
					\draw[thin,gray!20] (\x,0) -- (\x,1);
					\draw[thin,gray!20] (0,\x) -- (1,\x);
				}
				\fill[gray!50] (3/5,3/5) rectangle (1,1);
				
				\draw[dashed] (0,0) rectangle (1,1);
				\foreach \x in {1/5, 2/5, 3/5, 4/5} {
					\draw[dashed] (\x,0) -- (\x,1);
				}
				\foreach \y in {1/5, 2/5, 3/5, 4/5} {
					\draw[dashed] (0,\y) -- (1,\y);
				}
				
				\draw[dashed] (3/5,3/5) rectangle (1,1);
				\draw[thick] (3/5,3/5) -- (3/5,1);
				\draw[thick] (3/5,3/5) -- (1,3/5);
				
				\draw[thick, red] (0,0) rectangle (4/5,4/5);
				
				\node at (1-1/10,9/10) {\tiny $\Omega_0$};
		\end{tikzpicture}
			\vspace{0.2em}
			
	\end{minipage}
		\hspace{1em}
	\begin{minipage}{0.4\textwidth}
			\centering
		\begin{tikzpicture}[scale=2]
				
				\foreach \x in {0,1/10,3/10,5/10,7/10,9/10}{
					\draw[thin,gray!20] (\x,0) -- (\x,1);
					\draw[thin,gray!20] (0,\x) -- (1,\x);
				}
				
				\fill[gray!50] (3/5,3/5) rectangle (1,1);
				
				\fill[yellow!100] (3/5,3/5) rectangle (7/10,7/10);
				
				\draw[dashed] (0,0) rectangle (1,1);
				\foreach \x in {1/5, 2/5, 3/5, 4/5} {
					\draw[dashed] (\x,0) -- (\x,1);
				}
				\foreach \y in {1/5, 2/5, 3/5, 4/5} {
					\draw[dashed] (0,\y) -- (1,\y);
				}
				
				\draw (3/5,3/5) rectangle (7/10,7/10);
				
				\draw[dashed] (3/5,3/5) rectangle (1,1);
				\draw[thick] (3/5,3/5) -- (3/5,1);
				\draw[thick] (3/5,3/5) -- (1,3/5);
				
				\draw[thick, red] (1/5,1/5) rectangle (1,1);
				
				\node at (1-1/10,9/10) {\tiny $\Omega_0$};
		\end{tikzpicture}
		\vspace{0.2em}
			
	\end{minipage}
	\caption{Portion of the domain $\Omega_0$ (gray) and support of bicubic B-splines of level $\ell$ (red). Left: the support is not a $\mathbf{p}$-form. Right: the support is a $\mathbf{p}$-form as it contains the extended support of a cell of level $\ell+1$ (yellow) contained in $\Omega_0$.}\label{Fig: Supp de B-spline y Formas p}
\end{figure}
\end{remark}

In what follows, for the sake of simplicity and better visualization, respectively, the proofs and mesh plots will be presented for the case $d=2$.

\begin{definition}[Neighborhood of a cell] \label{Def: C_Q}
	Let $Q\in\QQ^\infty_\ell$. We define the set $$\mathcal{N}_Q := \bigcup_{\substack{Q^*\in\QQ^\infty_\ell, \\ Q^*\subset \widehat{Q}}} \parent(Q^*).$$
\end{definition}

The following result establishes that $\mathcal{N}_Q$ is the support of a B-spline of level $\ell-1$, which consists of $(p+1)\times(p+1)$ cells of level $\ell-1$; and moreover, if $Q$ is in $\Omega_0$, $\mathcal{N}_Q$ is a $\mathbf{p}$-form of level $\ell-1$.

\begin{proposition}\label{Prop: C_Q es Forma_p}
	Let $Q\in\QQ^\infty_\ell$. Then, $\mathcal{N}_Q\in \mathbf{B}^\infty_{\mathbf{p},\ell-1}$. Furthermore, if $Q \subset \Omega_0$, it holds that $\mathcal{N}_Q\in \mathbf{F}^\infty_{\mathbf{p},\ell-1}$.
\end{proposition}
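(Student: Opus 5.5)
Since everything is of tensor-product form, the plan is to reduce to one dimension and compute explicitly. Write $Q=\prod_{i=1}^d[\tfrac{j_i}{2^\ell},\tfrac{j_i+1}{2^\ell}]$ with $\mathbf{j}\in\mathbb{Z}^d$. The B-splines of level $\ell$ whose support contains $Q$ are exactly $B^\ell_\mathbf{k}$ with $j_i-p\le k_i\le j_i$ for each $i$, by \eqref{supp de B-splines d dimensional}. Hence
\[
\widehat{Q}=\prod_{i=1}^d\bigl[\tfrac{j_i-p}{2^\ell},\tfrac{j_i+p+1}{2^\ell}\bigr],
\]
a box of $(2p+1)^d$ cells of level $\ell$. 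Because taking parents commutes with the product structure, the cells $Q^*\subset\widehat Q$ are the products of the univariate level-$\ell$ intervals in each factor. Therefore $\mathcal N_Q$ is the product over $i$ of the union of the parents of the intervals $[\tfrac{m}{2^\ell},\tfrac{m+1}{2^\ell}]$ for $m=j_i-p,\dots,j_i+p$.

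In each coordinate, the parent of $[\tfrac{m}{2^\ell},\tfrac{m+1}{2^\ell}]$ is $[\tfrac{\lfloor m/2\rfloor}{2^{\ell-1}},\tfrac{\lfloor m/2\rfloor+1}{2^{\ell-1}}]$. As $m$ runs over consecutive integers, the union of these parents is the interval
\[
\Bigl[\tfrac{\lfloor (j_i-p)/2\rfloor}{2^{\ell-1}},\tfrac{\lfloor (j_i+p)/2\rfloor+1}{2^{\ell-1}}\Bigr].
\]
The key computation is that it always contains exactly $p+1$ cells of level $\ell-1$, i.e.\ $\lfloor (j+p)/2\rfloor-\lfloor (j-p)/2\rfloor=p$ for every integer $j$. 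This holds because $j+p$ and $j-p$ differ by the even number $2p$, so the floors differ by exactly $p$. Setting $k_i=\lfloor (j_i-p)/2\rfloor$ gives $\mathcal N_Q=\supp B^{\ell-1}_{\mathbf k}\in\mathbf{B}^\infty_{\mathbf p,\ell-1}$. This counting step is the only real content, and the parity observation settles it with no case split.

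For the second claim, assume $Q\subset\Omega_0$. By \Cref{Def: Formas_p_nivel_l} at level $\ell-1$, it suffices to exhibit a cell $Q'\in\QQ^\infty_{\ell}$ with $Q'\subset\Omega_0$ whose support extension with respect to level $\ell$ B-splines satisfies $\widehat{Q'}\subset\mathcal N_Q$. Take $Q'=Q$ itself. Every $Q^*\subset\widehat Q$ satisfies $Q^*\subset\parent(Q^*)\subset\mathcal N_Q$, so $\widehat Q\subset\mathcal N_Q$ directly from \Cref{Def: C_Q}. Combined with $\mathcal N_Q\in\mathbf{B}^\infty_{\mathbf p,\ell-1}$, this gives $\mathcal N_Q\in\mathbf F^\infty_{\mathbf p,\ell-1}$.

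The only potential obstacle is bookkeeping: one must check that the union of parents over a rectangular block of cells is again the corresponding product of unions. This follows because $\parent$ acts coordinatewise on products of dyadic intervals.
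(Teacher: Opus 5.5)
Your proof is correct and takes the same route as the paper: you show that $\mathcal{N}_Q$ is a box of $(p+1)^d$ cells of level $\ell-1$ containing $\widehat{Q}$, hence a B-spline support of level $\ell-1$, and you then use $Q$ itself as the witness cell in the definition of a $\mathbf{p}$-form. Your floor/parity identity $\lfloor (j+p)/2\rfloor-\lfloor (j-p)/2\rfloor=p$ simply makes explicit, with the index $\mathbf{k}$ written out, the count of $(2p+2)^d$ level-$\ell$ cells that the paper reads off from the dyadic-refinement picture.
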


\begin{proof}
	For $Q\in\QQ^\infty_\ell$, we consider its support extension $\widehat{Q}$, which is a set formed by $(2p+1)\times(2p+1)$ cells of level $\ell$ centered at $Q$. Then, since $\QQ^\infty_\ell$ is obtained from $\QQ^\infty_{\ell-1}$ by dyadic refinement, $\mathcal{N}_Q$ is formed by $(2p+2)\times(2p+2)$ cells of level $\ell$, or equivalently, $(p+1)\times(p+1)$ cells of level $\ell-1$ (see Figure \ref{Fig:CQ}). Therefore, $\mathcal{N}_Q$ is an element of $\mathbf{B}^\infty_{\mathbf{p},\ell-1}$ for which $\widehat{Q}\subset \mathcal{N}_Q$. Furthermore, if $Q\subset\Omega_0$ using the Definition \ref{Def: Formas_p_nivel_l}, it follows that $\mathcal{N}_Q\in \mathbf{F}^\infty_{\mathbf{p},\ell-1}$.
\end{proof}

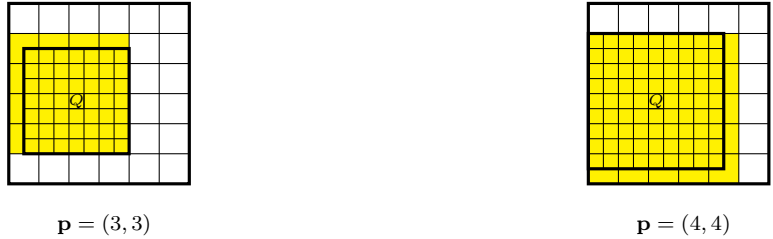
\begin{figure}[ht]
	\centering
	
	\begin{minipage}{0.4\textwidth}
		\centering
		\begin{tikzpicture}[scale=2]
			\fill[yellow!100] (0.0,0.2) rectangle (0.8,1.0);
			\draw[very thick] (0,0) rectangle (1.2,1.2);
			
			\foreach \x in {0.2, 0.4, 0.6, 0.8, 1} {
				\draw (\x,0) -- (\x,1.2);
			}
			
			\foreach \y in {0.2, 0.4, 0.6, 0.8, 1} {
				\draw (0,\y) -- (1.2,\y);
			}
			
			\draw[very thick] (0.1,0.2) rectangle (0.8,0.9);
			
			\foreach \x in {0.3, 0.5, 0.7} {
				\draw (\x,0.2) -- (\x,0.9);
			}
			
			\foreach \y in {0.3, 0.5, 0.7} {
				\draw (0.1,\y) -- (0.8,\y);
			}
			
			\draw (0.5,0.5) -- (0.5,0.6); 
			\draw (0.4,0.5) -- (0.5,0.5); 
			
			\node at (0.45,0.55) {\tiny $Q$};

		\end{tikzpicture}
		\vspace{0.2em}
		
		{\scriptsize $\mathbf{p}=(3,3)$}
	\end{minipage}
	\hspace{2em}
	\begin{minipage}{0.4\textwidth}
		\centering
		\begin{tikzpicture}[scale=2]
			
			\fill[yellow!100] (0.0,0.0) rectangle (1.0,1.0);
			
			\draw[very thick] (0,0) rectangle (1.2,1.2);
			
			\foreach \x in {0.2, 0.4, 0.6, 0.8, 1} {
				\draw (\x,0) -- (\x,1.2);
			}
			
			\foreach \y in {0.2, 0.4, 0.6, 0.8, 1} {
				\draw (0,\y) -- (1.2,\y);
			}
			
			\draw[very thick] (0.,0.1) rectangle (0.9,1.);
			
			\foreach \x in {0.1, 0.3, 0.5, 0.7} {
				\draw (\x,0.1) -- (\x,1.);
			}
			
			\foreach \y in {0.3, 0.5, 0.7, 0.9} {
				\draw (0.,\y) -- (0.9,\y);
			}
			
			\draw (0.5,0.5) -- (0.5,0.6); 
			\draw (0.4,0.5) -- (0.5,0.5); 
			
			\node at (0.45,0.55) {\tiny $Q$};
			
		\end{tikzpicture}
		\vspace{0.2em}
		
		{\scriptsize $\mathbf{p}=(4,4)$}
	\end{minipage}
	\caption{Neighborhood $\mathcal{N}_Q$ of a cell $Q$, distinguishing the refined region $\widehat{Q}$ from the highlighted area $\mathcal{N}_Q$.}\label{Fig:CQ}
\end{figure}

The set $\mathcal{N}_Q$ for a cell $Q\in\QQ_\ell$ was previously considered in \cite[Definition 5]{BGV2018}. This neighborhood of $Q$ will be highly useful in the following section, and its properties are fundamental for characterizing weakly admissible meshes.

\begin{proposition} \label{Prop: representacion de supp B-splines}
	If $C \in \mathbf{B}^\infty_{\mathbf{p},\ell-1}$, then there exist unique cells $\core(C) := \{Q_i\}_{i=1}^{2^d} \subset \QQ^\infty_\ell$ such that:
	\begin{enumerate}[label=(\roman*)]
		\item \label{item1} $C = \mathcal{N}_{Q_i}$ for $i=1,\dots,2^d$.
		\item \label{item2} $C = \bigcup_{i=1}^{2^d} \widehat{Q_i}$.
		\item \label{item3} Additionally, if $C\in \mathbf{F}^\infty_{\mathbf{p},\ell-1}$, then
		\begin{equation*}\label{Eq: representación de forma p}
			C \: \cap \: \Omega_0 = \bigcup_{\substack{Q_i\in \core(C), \\ Q_i\subset\Omega_0}} \: \widehat{Q_i} \:\cap\: \Omega_0.
		\end{equation*}
	\end{enumerate}
\end{proposition}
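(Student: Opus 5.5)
The proof reduces everything to one-dimensional integer arithmetic, since every object involved is a Cartesian product of intervals. Write $C=\prod_{j=1}^d[\tfrac{k_j}{2^{\ell-1}},\tfrac{k_j+p+1}{2^{\ell-1}}]$ with $\mathbf{k}\in\mathbb{Z}^d$. A cell $Q\in\QQ^\infty_\ell$ is $Q=\prod_j[\tfrac{m_j}{2^\ell},\tfrac{m_j+1}{2^\ell}]$. By \eqref{supp de B-splines d dimensional}, its support extension is
\[
\widehat{Q}=\prod_j\Bigl[\tfrac{m_j-p}{2^\ell},\tfrac{m_j+p+1}{2^\ell}\Bigr].
\]
Consequently, $\mathcal{N}_Q$ is obtained coordinatewise by rounding the endpoints $m_j-p$ down and $m_j+p+1$ up to even integers; this is the computation already used in the proof of Proposition~\ref{Prop: C_Q es Forma_p}. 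Both $\mathcal{N}_Q$ and $\widehat{Q}$ factor over coordinates, so it suffices to treat the univariate case and then take products.

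\textbf{Item (i).} In one dimension, the interval $[m-p,m+p+1]$ has length $2p+1$. Exactly one of its endpoints has the parity opposite to the required one, and rounding that endpoint to the next even integer outward yields an interval of length $2p+2$ with even endpoints. Requiring this interval to equal $[2k,2k+2p+2]$ leaves exactly two solutions for $m$. If $m-p$ is even, we need $m-p=2k$. If $m-p$ is odd, we need $m+p+1=2k+2p+2$. Hence the two solutions are $m\in\{2k+p,\,2k+p+1\}$, which are the two middle level-$\ell$ subintervals of $C$. Taking products of these choices gives exactly $2^d$ cells $Q_i$ with $\mathcal{N}_{Q_i}=C$. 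Uniqueness follows because these were the only solutions in each coordinate.

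\textbf{Item (ii).} Note that $\widehat{Q_i}\subset\mathcal{N}_{Q_i}=C$ by construction, so one inclusion is immediate. For the reverse, in one dimension the two extensions are $[2k,2k+2p+1]$ and $[2k+1,2k+2p+2]$, whose union is $[2k,2k+2p+2]$. In $d$ dimensions, the point to check is that a union of products covers the whole product. Take $x\in C$. For each coordinate $j$, choose the index $m_j\in\{2k_j+p,2k_j+p+1\}$ whose one-dimensional extension contains $x_j$. The corresponding product cell $Q_i$ then satisfies $x\in\widehat{Q_i}$.

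\textbf{Item (iii).} This is the step needing most care, since the union now runs only over core cells inside $\Omega_0$. The inclusion $\supseteq$ follows from item (ii). For $\subseteq$, I would use that $\Omega_0$ is a union of level-$0$ cells, so whether a level-$\ell$ cell lies in $\Omega_0$ depends only on its level-$0$ parent. The hypothesis $C\in\mathbf{F}^\infty_{\mathbf{p},\ell-1}$ provides a cell $Q\subset\Omega_0$ with $\widehat{Q}\subset C$. Comparing lengths, $\widehat{Q}$ has $2p+1$ level-$\ell$ subintervals per coordinate while $C$ has $2p+2$, so $Q$ is forced to be a core cell.

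Now take $x\in C\cap\Omega_0$, lying in some level-$\ell$ cell $Q'\subset C\cap\Omega_0$. Coordinatewise, I would try to choose the core index that is closest to $Q'$ among those whose extension contains $Q'$. The key fact to verify is that this core cell lies in the same level-$0$ cell as $Q'$, or in the same level-$0$ cell as $Q$; in either case it is contained in $\Omega_0$. In one dimension this is clean, because the two core intervals $2k+p$ and $2k+p+1$ are adjacent. Either they share a level-$0$ parent, or a dyadic level-$0$ boundary separates them, and then the core cell on the side of $Q'$ shares its parent with $Q'$.

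The main obstacle is the multidimensional case. Here $\Omega_0$ is not a product set, so membership must be argued through a whole level-$0$ cell. I would handle this by building the chosen core cell coordinatewise: in coordinate $j$, take the index of $Q$ if $Q'$ and $Q$ agree in their level-$0$ coordinate, and otherwise the index on the side of $Q'$. I would then check that the resulting cell lies in the level-$0$ cell containing $Q'$, or in the one containing $Q$. If this coordinatewise argument fails for a mixed choice, I would fall back to an explicit case analysis in $d=2$, which is the setting in which the paper presents its proofs.
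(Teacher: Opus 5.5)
Your treatment of (i) and (ii) is correct, and so is your observation that the witness $Q$ of $C\in\mathbf{F}^\infty_{\mathbf{p},\ell-1}$ must be a core cell. The gap is in (iii). Your one-dimensional claim that ``the core cell on the side of $Q'$ shares its parent with $Q'$'' fails as soon as $C$ extends past the level-$0$ cell adjacent to the central node on that side, which can happen when $2^\ell<p+1$.

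More seriously, if $\Omega_0$ is an arbitrary union of level-$0$ cells (the reading you adopt), no argument can work, because (iii) is false. Take $d=1$, $p=3$, $\ell=1$, $C=[0,4]\in\mathbf{B}^\infty_{\mathbf{p},0}$ and $\Omega_0=[0,2]\cup[3,4]$. The core cells are $[1.5,2]$ and $[2,2.5]$, and only the first lies in $\Omega_0$. It witnesses $C\in\mathbf{F}^\infty_{\mathbf{p},0}$, since $\widehat{[1.5,2]}=[0,3.5]\subset C$. Yet $3.75\in C\cap\Omega_0$ is not covered.

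The mixed-coordinate situation you flagged also genuinely fails in $d=2$, even when $2^\ell\ge p+1$. Take $p=3$, $\ell=2$, $C=[0,2]\times[0.5,2.5]\in\mathbf{B}^\infty_{\mathbf{p},1}$ and the L-shaped $\Omega_0=[0,1]\times[1,3]\cup[1,2]\times[2,3]$. Only the two core cells with first coordinate in $[0.75,1]$ lie in $\Omega_0$. Their extensions cover only $[0,1.75]\times[0.5,2.5]$, so they miss $(1.9,2.25)\in C\cap\Omega_0$. Your coordinatewise rule produces $[1,1.25]\times[1.5,1.75]$ there, whose level-$0$ parent $[1,2]^2$ is neither that of $Q$ nor that of $Q'$. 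So a fallback case analysis in $d=2$ cannot rescue the general version either.

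The missing ingredient is that $\Omega_0$ is a box $\prod_j I_j$. In the paper $\Omega_0=[0,1]^d$, and the three configurations drawn in the paper's proof (a half-space or a quadrant cut off by hyperplanes through the central node) are exactly the box cases. With this assumption, your product structure finishes (iii) directly.

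Work in level-$\ell$ units. A core cell lies in $\Omega_0$ iff each of its coordinate intervals lies in $I_j$. Hence the core cells in $\Omega_0$ form a product $\prod_j S_j$ with $S_j\subset\{2k_j+p,\,2k_j+p+1\}$, and each $S_j$ is nonempty by your witness argument. The right-hand side of (iii) is then $\prod_j\bigl(\bigcup_{m\in S_j}[m-p,m+p+1]\cap I_j\bigr)$. The left-hand side is $C\cap\Omega_0=\prod_j(C_j\cap I_j)$ with $C_j=[2k_j,2k_j+2p+2]$.

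It remains to compare coordinate by coordinate. If $S_j=\{2k_j+p\}$, the right endpoint of $I_j$ must be the node $2k_j+p+1$. Then $C_j\cap I_j\subset[2k_j,2k_j+p+1]\subset[2k_j,2k_j+2p+1]$, which is the extension of the core interval $2k_j+p$. The case $S_j=\{2k_j+p+1\}$ is symmetric, and the case where $S_j$ contains both indices is the one-dimensional version of (ii).
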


\begin{proof}
	We provide the proof for $d=2$, the general case follows from the same argument.	
	We prove \ref{item1} and \ref{item2} simultaneously. We consider $C \in \mathbf{B}^\infty_{\mathbf{p},\ell-1}$, which is a set formed by $(p+1)\times(p+1)$ cells of level $\ell-1$ (see \Cref{Obs: repreesentacion forma p}). Then, through dyadic refinement, $C$ is a set formed by $(2p+2)\times(2p+2)$, or equivalently, $(p+2+p)\times(p+2+p)$ cells of level $\ell$ (see Figure \ref{Fig: C como union de extensiones}). This indicates that there are four unique cells $Q_1, Q_2, Q_3, Q_4 \in \QQ^\infty_\ell$ (since $2^d=4$ when $d=2$) such that $C = \mathcal{N}_{Q_i}$ for $i = 1,2,3,4$. Furthermore, $C = \bigcup_{i=1}^4 \widehat{Q_i}$.
	
	\begin{figure}[ht]
		\centering
		\begin{minipage}{0.4\textwidth}
			\centering
		\begin{tikzpicture}[scale=2]
			\draw[very thick] (0,0) rectangle (1,1);
			
			\foreach \x in {0.2, 0.4, 0.6, 0.8} {
				\draw (\x,0) -- (\x,1);
			}
			
			\foreach \y in {0.2, 0.4, 0.6, 0.8} {
				\draw (0,\y) -- (1,\y);
			}
			
			\node at (0.5,1.1) {\scriptsize $p+1$};
		\end{tikzpicture}
		\vspace{0.2em}
		
	\end{minipage}
	\hspace{2em}
	\begin{minipage}{0.4\textwidth}
		\centering
		\begin{tikzpicture}[scale=2]
			
			\fill[yellow!90] (0.4,0.4) rectangle (0.6,0.6);
			\draw[very thick] (0,0) rectangle (1,1);
			
			\foreach \x in {0.1, 0.2, 0.3, 0.4, 0.5, 0.6, 0.7, 0.8, 0.9} {
				\draw (\x,0) -- (\x,1);
			}
			
			\foreach \y in {0.1, 0.2, 0.3, 0.4, 0.5, 0.6, 0.7, 0.8, 0.9} {
				\draw (0,\y) -- (1,\y);
			}
			
			\node at (0.5,1.1) {\scriptsize $p+2+p$};
		\end{tikzpicture}
		\vspace{0.2em}
		
	\end{minipage}

	\caption{Dyadically refining a set $C \in \mathbf{B}^\infty_{\mathbf{p},\ell-1}$ (on the left) we obtain $4$ cells $Q$ of level $\ell$ (highlighted on the right) that have the same $\mathcal{N}_Q$, which coincides with $C$, and moreover, the union of their support extensions equals $C$.} \label{Fig: C como union de extensiones}
\end{figure}
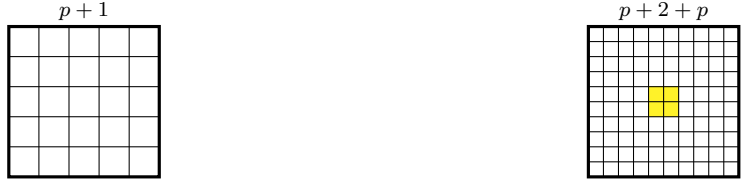

To prove \ref{item3}, suppose that $C\in \mathbf{F}^\infty_{\mathbf{p},\ell-1}$. If $Q_1, Q_2, Q_3, Q_4 \subset \Omega_0$, the result follows from~\ref{item2}. Otherwise, we can write:
\[
C \: \cap \: \Omega_0 = \left[ \bigcup_{\substack{Q_i\in \core(C), \\ Q_i\subset\Omega_0}} \: \widehat{Q_i} \: \cap \: \Omega_0 \right] \: \cup \left[ \bigcup_{\substack{Q_i\in \core(C), \\ Q_i\not\subset\Omega_0}} \: \widehat{Q_i} \: \cap \: \Omega_0 \right] =: A \cup B.
\]
It is sufficient to show that $B \subset A$. This situation can only occur in the distinct cases, which are illustrated in Figure \ref{Fig: tres casos}, where the inclusion holds.

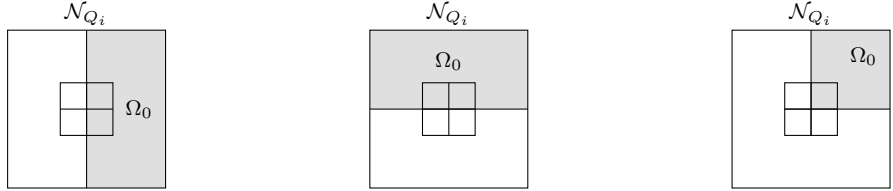
\begin{figure}[ht]
	\centering
	
	\begin{minipage}{0.2\textwidth}
		\centering
		\begin{tikzpicture}[scale=.7]
			
			\fill[gray! 25] (0.5,-1) rectangle (2,2);
			
			\draw (0.5,-1) -- (0.5,2);
			\draw (-1,-1) rectangle (2,2);
			\draw (0,0) rectangle (1,1);
			\foreach \x in {0.5} {
				\draw (\x,0) -- (\x,1);
			}
			\foreach \y in {0.5} {
				\draw (0,\y) -- (1,\y);
			}
			
			\node at (1.5,0.5) {\scriptsize $\Omega_0$};
			
			\node at (0.5,2.3) {\scriptsize $\mathcal{N}_{Q_i}$};
		\end{tikzpicture}
		\vspace{0.2em}
		
	\end{minipage}
	\hspace{1em}
	\begin{minipage}{0.3\textwidth}
		\centering
		\begin{tikzpicture}[scale=.7]
			
			\fill[gray! 25] (-1,0.5) rectangle (2,2);
			
			\draw (-1,0.5) -- (2,0.5);
			
			\draw (-1,-1) rectangle (2,2);
			\draw (0,0) rectangle (1,1);
			\foreach \x in {0.5} {
				\draw (\x,0) -- (\x,1);
			}
			\foreach \y in {0.5} {
				\draw (0,\y) -- (1,\y);
			}
			
			\node at (0.5,1.4) {\scriptsize $\Omega_0$};						\node at (0.5,2.3) {\scriptsize $\mathcal{N}_{Q_i}$};
		\end{tikzpicture}
		\vspace{0.2em}
		
	\end{minipage}
	\hspace{1em}
	\begin{minipage}{0.2\textwidth}
		\centering
		\begin{tikzpicture}[scale=.7]
			
			\fill[gray! 25] (0.5,0.5) rectangle (2,2);
			
			\draw (0.5,0) -- (0.5,2);
			\draw (0,0.5) -- (2,0.5);
			\draw (-1,-1) rectangle (2,2);
			
			\draw (0,0) rectangle (1,1);
			\foreach \x in {0.5} {
				\draw (\x,0) -- (\x,1);
			}
			\foreach \y in {0.5} {
				\draw (0,\y) -- (1,\y);
			}
			
			\node at (1.5,1.5) {\scriptsize $\Omega_0$};						\node at (0.5,2.3) {\scriptsize $\mathcal{N}_{Q_i}$};
		\end{tikzpicture}
		\vspace{0.2em}
		
	\end{minipage}

	\caption{The possible configurations for the cells $\{Q_i\}_{i=1}^4 \subset \QQ^\infty_\ell$ that satisfy \ref{item2} in Proposition \ref{Prop: representacion de supp B-splines} for which some $Q_i$ is not contained in $\Omega_0$.} \label{Fig: tres casos}
\end{figure}
\end{proof}

\begin{proposition} \label{C_Q*_incluido_extendido_parent_Q}
If $Q\in\QQ^\infty_\ell$, then $\widehat{\parent(Q)} = \bigcup\{\mathcal{N}_{Q^*} \,\mid\,Q^*\in\QQ^\infty_\ell,  Q^*\subset \widehat{Q}\}$.
\end{proposition}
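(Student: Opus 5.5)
The identity is purely combinatorial, so I will prove it by computing both sides as boxes of cells. Everything factors over coordinates: $\widehat{Q}$, $\mathcal{N}_{Q^*}$ and $\widehat{\parent(Q)}$ are all Cartesian products of intervals, and the index set $\{Q^*\subset\widehat{Q}\}$ is a product of index sets. Hence it suffices to treat $d=1$ and then take products. In doing so I must check that a union over a product index set of product sets equals the product of the one-dimensional unions. This holds because each factor depends only on its own coordinate index.

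In one dimension, write $Q=[j2^{-\ell},(j+1)2^{-\ell}]$. By \eqref{supp de B-splines d dimensional}, the level-$\ell$ supports containing $Q$ are $[k2^{-\ell},(k+p+1)2^{-\ell}]$ with $j-p\le k\le j$. So
\[
\widehat{Q}=[(j-p)2^{-\ell},(j+p+1)2^{-\ell}],
\]
which consists of the cells $Q^*_i=[i2^{-\ell},(i+1)2^{-\ell}]$ for $i=j-p,\dots,j+p$.

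By the proof of Proposition~\ref{Prop: C_Q es Forma_p}, $\mathcal{N}_{Q^*_i}$ is the union of the parents of the cells of $\widehat{Q^*_i}$. These are the level-$(\ell-1)$ cells meeting the indices $i-p,\dots,i+p$ at level $\ell$. Hence
\[
\mathcal{N}_{Q^*_i}=[\lfloor (i-p)/2\rfloor 2^{-\ell+1},(\lfloor (i+p)/2\rfloor+1)2^{-\ell+1}].
\]

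Taking the union over $i\in\{j-p,\dots,j+p\}$ gives a single interval. The intervals are nested-monotone in $i$, and consecutive ones overlap since each has length $p+1\ge1$ coarse cells. The resulting interval is
\[
[\lfloor (j-2p)/2\rfloor 2^{-\ell+1},(\lfloor (j+2p)/2\rfloor+1)2^{-\ell+1}] .
\]
On the other side, $\parent(Q)$ has index $m=\lfloor j/2\rfloor$ at level $\ell-1$. Its support extension is
\[
\widehat{\parent(Q)}=[(m-p)2^{-\ell+1},(m+p+1)2^{-\ell+1}].
\]
Since $\lfloor (j\pm 2p)/2\rfloor=\lfloor j/2\rfloor\pm p$, the two intervals coincide.

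The key step, and the only real obstacle, is the floor-function bookkeeping: the identity $\lfloor(j\pm2p)/2\rfloor=m\pm p$ holds because the shift is even. This is exactly why the union of the $\mathcal{N}_{Q^*}$ exactly reproduces the coarse support extension rather than overshooting by one cell.

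As a more geometric alternative, I could argue both inclusions directly. For ``$\supset$'': each $\mathcal{N}_{Q^*}$ is a level-$(\ell-1)$ support containing $\parent(Q^*)$, which lies within distance $p$ coarse cells of $\parent(Q)$ in each direction. I would then need to show it actually contains $\parent(Q)$. For ``$\subset$'': every coarse support containing $\parent(Q)$ equals $\mathcal{N}_{Q^*}$ for some $Q^*\in\core(C)$ by Proposition~\ref{Prop: representacion de supp B-splines}\ref{item1}, and one checks that such a core cell lies in $\widehat{Q}$. I would present the explicit index computation, since it handles both inclusions at once.
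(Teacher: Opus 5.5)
Your proof is correct and follows essentially the same route as the paper, which also describes both sides as explicit boxes of cells. The paper forms the union $A$ of the support extensions of the level-$\ell$ cells in $\widehat{Q}$, a block of $(4p+1)^d$ cells centered at $Q$; taking their parents gives $(4p+2)^d$ level-$\ell$ cells, i.e.\ $(2p+1)^d$ level-$(\ell-1)$ cells centered at $\parent(Q)$. Your coordinatewise reduction and the identity $\lfloor (j\pm 2p)/2\rfloor=\lfloor j/2\rfloor\pm p$ are a rigorous version of that counting step, and taking parents before rather than after forming the union makes no difference.
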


\begin{proof}
Let $Q\in\QQ^\infty_\ell$. First, note that $\widehat{\parent(Q)}$ is the unique set formed by $(2p+1)\times(2p+1)$ cells of level $\ell-1$ centered at the cell $\parent(Q)$. On the other hand, by considering $\widehat{Q}$, let be the set
\(
A := \bigcup\{ \widehat{Q^*}\,\mid\,Q^*\in\QQ^\infty_\ell, Q^*\subset \widehat{Q}\},
\)
consisting of $(4p+1)\times(4p+1)$ cells of level $\ell$ centered at $Q$ (see Figure \ref{Fig: extension del padre de Q}). Since \(A=\bigcup\{Q^{**}\,\mid\,Q^{**}\in\QQ^\infty_\ell, Q^{**}\subset \widehat{Q^*}, Q^*\in\QQ^\infty_\ell, Q^*\subset \widehat{Q}\}\)
and taking Definition \ref{Def: C_Q} into account, we obtain that \[\bigcup\{\parent(Q^{**})\,\mid\,Q^{**}\in\QQ^\infty_\ell,\, Q^{**}\subset A\} = \bigcup\{\mathcal{N}_{Q^*} \,\mid\,Q^*\in\QQ^\infty_\ell,  Q^*\subset \widehat{Q}\}.\]
\begin{figure}[ht]
	\centering
	
	\begin{minipage}{0.4\textwidth}
		\centering
		\begin{tikzpicture}[scale=0.25]
			
			\fill[yellow!] (4,4) rectangle (5,5);
			
			\foreach \x in {0,1,...,10} {
				\draw[thin,gray!20] (\x,0) -- (\x,10);
				\draw[thin,gray!20] (0,\x) -- (10,\x);
			}
			
			\foreach \x in {0,2,...,10} {
				\draw (\x,0) -- (\x,10);
				\draw (0,\x) -- (10,\x);
			}
			
			\foreach \i in {5} {
				\node at (\i,11) {$2p+1$};
			}
			
			\draw[very thick] (4,4) rectangle (6,6);
			
		\end{tikzpicture}
	\end{minipage}
	\hfill
	\begin{minipage}{0.4\textwidth}
		\centering
		\begin{tikzpicture}[scale=0.25]
			
			\fill[gray!50] (0,0) rectangle (9,9);
			
			\fill[gray!25] (2,2) rectangle (7,7);
			
			\fill[yellow!] (4,4) rectangle (5,5);
			
			\foreach \x in {0,1,...,10} {
				\draw[thin,gray!80] (\x,0) -- (\x,10);
				\draw[thin,gray!80] (0,\x) -- (10,\x);
			}
			
			\foreach \x in {0,2,...,10} {
				\draw (\x,0) -- (\x,10);
				\draw (0,\x) -- (10,\x);
			}
			
			\foreach \i in {5} {
				\node at (\i,11) {$4p+2$};
			}
			
		\end{tikzpicture}
	\end{minipage}
	\caption{
	The set $\widehat{\parent{Q}}$ is represented as the union of cells of level $\ell-1$ (left) and level $\ell$ (right), where $Q$ denotes the cell of level $\ell$ shown in yellow. In the right figure, the set $\widehat{Q}$ is depicted in light gray, while the set $A$, defined as the union of support extensions of cells of level $\ell$ in $\widehat{Q}$, is shown in dark gray.}\label{Fig: extension del padre de Q}
\end{figure}
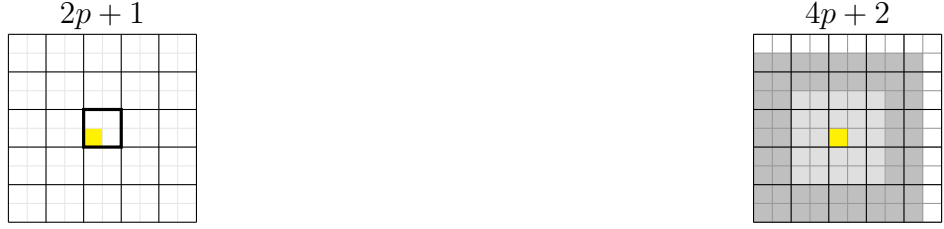
We complete te proof by noticing that this last set consists of $(4p+2)\times(4p+2)$ cells of level $\ell$, or equivalently, $(2p+1)\times(2p+1)$ cells of level $\ell-1$, centered at $\parent(Q)$. 
\end{proof}

\begin{remark}\label{parent_extendido_Q_incluido_extendido_Parent_Q}
Notice that, for each $Q\in\QQ^\infty_\ell$, the set $\mathcal{N}_Q$ represents a neighborhood of $Q$ that contains the support extension of $Q$ and is contained within the support extension of the parent of $Q$, i.e., 
\[
\widehat{Q}\subset \mathcal{N}_Q \subset \widehat{\parent(Q)}.
\]
Indeed, the first inclusion follows from \Cref{Def: C_Q}, while the second follows from the previous proposition.
\end{remark}

\subsection{Characterization of weakly admissible hierarchical meshes}\label{subsec: Caracterizacion MDA}

The objective of this section is to present two characterizations of weakly admissible hierarchical meshes (WAHMs) based on the cell neighborhoods $\mathcal{N}_Q$. The first characterization relates cells of level $\ell$ within $\omega_\ell$ to the set $\Omega_{\ell-1}$, while the second relates those within $\Omega_\ell$ to $\Omega_{\ell-1}$.

We consider $\Omega= [0,1]^d \subset \mathbb{R}^d$ and a hierarchical spline space constructed from a hierarchy of subdomains of $\Omega$ with depth $n$, as explained at the begining of~\Cref{S:WAHM}. We first observe the relationship between the support extensions $\widehat{Q}$ and $\tilde{Q}$, which correspond to the infinite meshes $\{\QQ^\infty_\ell\}_{\ell\in\mathbb{Z}}$ and the bounded meshes $\{\QQ_\ell\}_{\ell\in\mathbb{Z}}$, respectively.

\begin{remark}\label{Obs: extension infinita y extension}
	For any cell $Q\in \QQ^\infty_\ell$ such that $Q\subset \Omega_0$, it holds that
	\(
	\tilde{Q} = \widehat{Q} \cap \Omega_0.
	\)
	This follows directly from the one-to-one correspondence between the B-splines $\beta \in \BB_\ell$ and the B-splines $\beta' \in \BB^\infty_{\mathbf{p},\ell}$. Since their supports satisfy $\supp\beta = \supp\beta' \cap \Omega_0$, we obtain:
	\[
	\widehat{Q} \cap \Omega_0 = \bigcup_{\substack{\beta'\in\BB^\infty_{\mathbf{p},\ell}\\\supp\beta'\supset Q}} (\supp\beta' \cap \Omega_0) = \bigcup_{\substack{\beta\in\BB_\ell\\\supp\beta\supset Q}} \supp\beta = \tilde{Q}.
	\]
\end{remark}

\begin{proposition} \label{extendido_parent_Q_incluido_Omega_ell-1}
	If $\QQQ$ is a WAHM, the support extension of the parents of all cells of level $\ell$ contained in $\omega_\ell$ is contained in $\Omega_{\ell-1}$, i.e.,
	\begin{equation*}
		\bigcup_{Q\in\QQ_\ell, Q\subset\omega_\ell} \widetilde{\parent(Q)} \subset \Omega_{\ell-1}.
	\end{equation*}
\end{proposition}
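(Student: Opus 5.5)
Fix a level $\ell\ge 1$ and a cell $Q\in\QQ_\ell$ with $Q\subset\omega_\ell$. The proof strategy is to rewrite $\widetilde{\parent(Q)}$ as a union of neighborhoods $\mathcal{N}_{Q^*}$ of level-$\ell$ cells lying in $\omega_\ell$, and then to show that each such neighborhood, intersected with $\Omega_0$, is the support of a level-$(\ell-1)$ B-spline contained in $\Omega_{\ell-1}$.

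\emph{Step 1: reduce to neighborhoods.} By Remark~\ref{Obs: extension infinita y extension} applied at level $\ell-1$, we have $\widetilde{\parent(Q)}=\widehat{\parent(Q)}\cap\Omega_0$. By Proposition~\ref{C_Q*_incluido_extendido_parent_Q},
\[
\widetilde{\parent(Q)}=\bigcup\{\mathcal{N}_{Q^*}\cap\Omega_0 \mid Q^*\in\QQ^\infty_\ell,\ Q^*\subset\widehat{Q}\}.
\]
It therefore suffices to show that $\mathcal{N}_{Q^*}\cap\Omega_0\subset\Omega_{\ell-1}$ for every such $Q^*$.

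\emph{Step 2: restrict to cells inside $\omega_\ell$.} Since $Q\subset\omega_\ell$, we have $\widehat{Q}\cap\Omega_0=\widetilde{Q}\subset\Omega_\ell$. The cells $Q^*\subset\widehat{Q}$ split into two groups.
\begin{itemize}
\item Those with $Q^*\subset\Omega_0$. These lie in $\widetilde{Q}\subset\Omega_\ell$, and their neighborhoods are $\mathbf p$-forms by Proposition~\ref{Prop: C_Q es Forma_p}.
\item Those outside $\Omega_0$. For these, Proposition~\ref{Prop: representacion de supp B-splines}\ref{item3} (applied to the $\mathbf p$-form $\mathcal{N}_{Q^*}$, when it is one) shows that $\mathcal{N}_{Q^*}\cap\Omega_0$ is covered by the support extensions of the core cells lying in $\Omega_0$. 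Those core cells $Q_i$ satisfy $\mathcal{N}_{Q_i}=\mathcal{N}_{Q^*}$, so this case reduces to the first.
\end{itemize}
I would then refine the covering so that it uses only cells $Q^*\subset\omega_\ell$. The natural choice is $Q^*=Q$ itself, together with the cells of $\widehat Q$ whose extensions are inside $\Omega_\ell$. I expect this reduction to be the main obstacle: a cell $Q^*\subset\widehat Q$ need not lie in $\omega_\ell$, so the union must be reorganized through the core decomposition. Concretely, I would show that every $\mathcal{N}_{Q^*}$ appearing above coincides, after intersection with $\Omega_0$, with a union of sets $\widehat{Q'}\cap\Omega_0$. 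Here $Q'$ ranges over level-$\ell$ cells for which membership in $\omega_\ell$ can be verified using $\widetilde{Q}\subset\Omega_\ell$. If this direct route fails, I would instead use the weak admissibility $\omega_\ell\subset\omega_{\ell-1}$ more globally.

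\emph{Step 3: the key use of weak admissibility.} For $Q'\subset\omega_\ell$, we have $Q'\subset\omega_{\ell-1}$. Hence the level-$(\ell-1)$ cell $P=\parent(Q')$ is a subset of $\omega_{\ell-1}$, since $\omega_{\ell-1}$ is a union of level-$(\ell-1)$ cells and $Q'\subset P$ forces $P$ to be one of them. So $\widetilde{P}\subset\Omega_{\ell-1}$. By Remark~\ref{parent_extendido_Q_incluido_extendido_Parent_Q}, $\mathcal{N}_{Q'}\cap\Omega_0\subset\widehat{P}\cap\Omega_0=\widetilde{P}\subset\Omega_{\ell-1}$.

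Applying Step 3 to $Q'=Q$ already gives $\widetilde{\parent(Q)}\subset\Omega_{\ell-1}$ directly, since $\parent(Q)\subset\omega_{\ell-1}$ means precisely that $\widetilde{\parent(Q)}\subset\Omega_{\ell-1}$. This makes Steps 1--2 unnecessary. I would therefore present the short argument: $Q\subset\omega_\ell\subset\omega_{\ell-1}$, so $\parent(Q)$ is a level-$(\ell-1)$ cell meeting $\omega_{\ell-1}$ in its interior and hence contained in it, and by the definition of $\omega_{\ell-1}$ its support extension lies in $\Omega_{\ell-1}$. The only delicate point is that containment of $Q$ in a union of level-$(\ell-1)$ cells forces containment of $\parent(Q)$. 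This holds by nestedness of the dyadic meshes.
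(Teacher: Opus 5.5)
Your final short argument is correct and is exactly the paper's proof: weak admissibility gives $Q\subset\omega_\ell\subset\omega_{\ell-1}$. Since $\omega_{\ell-1}$ is a union of level-$(\ell-1)$ cells, $\parent(Q)$ must be one of them, so $\widetilde{\parent(Q)}\subset\Omega_{\ell-1}$; the detour through Steps 1--2, including the hedged covering argument there, is unnecessary and can be omitted.
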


\begin{proof}
	Let $Q\in\QQ_\ell$ such that $Q\subset\omega_\ell$. By the weak admissibility of the mesh, it follows that $Q\subset\omega_{\ell-1}$. Since $\omega_{\ell-1}$ is a union of cells of level $\ell-1$, we necessarily have $\parent(Q)\subset\omega_{\ell-1}$, from which the result follows.
\end{proof}

The following theorem characterizes the cells $Q\subset\omega_\ell$ in terms of the neighborhood $\mathcal{N}_Q$.

\begin{theorem}[Characterization of $\omega_\ell$] \label{Teo: Omega_ell y omega_ell}
	Let $\QQQ$ be a hierarchical mesh. For any cell $Q\in\QQ_\ell$, it holds that $Q\subset\omega_\ell$ if and only if $\mathcal{N}_Q \cap \Omega_0 \subset \Omega_{\ell}$.
\end{theorem}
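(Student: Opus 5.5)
The plan is to reduce the statement to the definition of $\omega_\ell$ and then compare $\widetilde{Q}$ with $\mathcal{N}_Q\cap\Omega_0$. We take $\ell\ge 1$, so that $\parent$ is defined.

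\emph{Reduction.} Since $\omega_\ell$ is a union of cells of level $\ell$, a cell $Q\in\QQ_\ell$ satisfies $Q\subset\omega_\ell$ if and only if $\widetilde{Q}\subset\Omega_\ell$; two distinct cells of the same level have disjoint interiors. By \Cref{Obs: extension infinita y extension} we have $\widetilde{Q}=\widehat{Q}\cap\Omega_0$. The theorem therefore amounts to
\[
\widehat{Q}\cap\Omega_0\subset\Omega_\ell \iff \mathcal{N}_Q\cap\Omega_0\subset\Omega_\ell .
\]

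\emph{Direction ($\Leftarrow$).} This is immediate from \Cref{parent_extendido_Q_incluido_extendido_Parent_Q}: $\widehat{Q}\subset\mathcal{N}_Q$, hence $\widehat{Q}\cap\Omega_0\subset\mathcal{N}_Q\cap\Omega_0\subset\Omega_\ell$.

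\emph{Direction ($\Rightarrow$).} By \Cref{Def: C_Q}, $\mathcal{N}_Q$ is the union of $\parent(Q^*)$ over the level-$\ell$ cells $Q^*\subset\widehat{Q}$. Two granularity facts are needed.
\begin{itemize}
\item $\Omega_\ell$ is a union of cells of level $\ell-1$.
\item $\Omega_0$ is a union of cells of level $0$, hence also of level $\ell-1$ and of level $\ell$.
\end{itemize}
Take such a $Q^*$ and split into two cases.

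In the first case $Q^*\subset\Omega_0$. Then $Q^*\subset\widehat{Q}\cap\Omega_0\subset\Omega_\ell$. Since $\Omega_\ell$ is a union of level-$(\ell-1)$ cells and $Q^*$ has nonempty interior, the level-$(\ell-1)$ cell containing $Q^*$ lies in $\Omega_\ell$. That is, $\parent(Q^*)\subset\Omega_\ell$.

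In the second case $Q^*\not\subset\Omega_0$. Then $\parent(Q^*)\not\subset\Omega_0$, so by the granularity of $\Omega_0$ the set $\parent(Q^*)\cap\Omega_0$ is contained in the boundary of $\parent(Q^*)$. We must show these boundary points still lie in $\Omega_\ell$.

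\emph{Main obstacle: the boundary points in the second case.} Let $x\in\parent(Q^*)\cap\Omega_0$. Then $x$ belongs to some level-$\ell$ cell $Q'\subset\Omega_0$ with $x\in Q'$. The aim is to show that such a $Q'$ can be chosen inside $\widehat{Q}$; then the first case gives $x\in\parent(Q')\subset\Omega_\ell$.

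The first attempt is to argue from the geometry of $\mathcal{N}_Q$. This set is $\widehat{Q}$ enlarged by one layer of level-$\ell$ cells on one side in each direction. So $x$ lies either in $\widehat{Q}$ itself or in that extra layer. In the latter case, the adjacent cells of $\widehat{Q}$ that touch $x$ lie in $\Omega_0$, because $\Omega_0$ is a box aligned with the level-$0$ grid.

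If this direct argument gets messy, the fallback is \Cref{Prop: representacion de supp B-splines}\ref{item3}. Apply it to $C=\mathcal{N}_Q$, which lies in $\mathbf{F}^\infty_{\mathbf{p},\ell-1}$ when $Q\subset\Omega_0$ by \Cref{Prop: C_Q es Forma_p}. This writes $\mathcal{N}_Q\cap\Omega_0$ as a union of $\widehat{Q_i}\cap\Omega_0$ over the core cells $Q_i\subset\Omega_0$. It then reduces the claim to $\widetilde{Q_i}\subset\Omega_\ell$ for those $Q_i$, which would again follow cell by cell from the first case.

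Combining the two cases gives $\mathcal{N}_Q\cap\Omega_0\subset\Omega_\ell$.
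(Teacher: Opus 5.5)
Your argument is the paper's: you reduce $Q\subset\omega_\ell$ to $\tilde{Q}=\widehat{Q}\cap\Omega_0\subset\Omega_\ell$, obtain ($\Leftarrow$) from $\widehat{Q}\subset\mathcal{N}_Q$, and obtain ($\Rightarrow$) from the fact that $\Omega_\ell$ is a union of level-$(\ell-1)$ cells. That fact gives $\parent(Q^*)\subset\Omega_\ell$ for every level-$\ell$ cell $Q^*\subset\widehat{Q}\cap\Omega_0$. The paper then writes ``consequently $\mathcal{N}_Q\cap\Omega_0\subset\Omega_\ell$''. This tacitly assumes that these parents cover $\mathcal{N}_Q\cap\Omega_0$, and you are right that this step needs justification.

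Your first attempt at that justification fails as stated, because a point of your second case need not lie in $\widehat{Q}$ or touch any of its cells. Take $p=1$, $\ell=2$ and $Q=[0,\tfrac14]\times[\tfrac12,\tfrac34]$. Then $\widehat{Q}=[-\tfrac14,\tfrac12]\times[\tfrac14,1]$ and $\mathcal{N}_Q=[-\tfrac12,\tfrac12]\times[0,1]$. For $Q^*=[-\tfrac14,0]\times[\tfrac14,\tfrac12]$, the set $\parent(Q^*)\cap\Omega_0=\{0\}\times[0,\tfrac12]$ contains $x=(0,\tfrac18)\notin\widehat{Q}$. So no $Q'\subset\widehat{Q}$ contains $x$.

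The right target is $x\in\parent(Q')$ rather than $x\in Q'$; here $x\in\parent([0,\tfrac14]\times[\tfrac14,\tfrac12])$. This follows in one line. $\mathcal{N}_Q$ and $\Omega_0$ are boxes aligned with the level-$(\ell-1)$ grid, and their intersection contains $Q$, so $\mathcal{N}_Q\cap\Omega_0$ is the union of the level-$(\ell-1)$ cells it contains. Each such cell lies in $\mathcal{N}_Q$, so it equals $\parent(Q')$ for some level-$\ell$ cell $Q'\subset\widehat{Q}$. Moreover $Q'\subset\parent(Q')\subset\Omega_0$, so your first case applies. This removes the second case entirely.

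Your fallback through Proposition~\ref{Prop: representacion de supp B-splines}(iii) is valid, but its ``cell by cell'' step silently needs exactly this observation. Cells of $\widetilde{Q_i}$ need not lie in $\widehat{Q}$: in the example, the core cell $Q_i=[0,\tfrac14]\times[\tfrac14,\tfrac12]$ has $[0,\tfrac14]\times[0,\tfrac14]\subset\widetilde{Q_i}$, which is not in $\widehat{Q}$. Only their parents coincide with parents of cells of $\widehat{Q}\cap\Omega_0$, so the detour through $\core(\mathcal{N}_Q)$ gains nothing.
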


\begin{proof}
	Let $Q\in\QQ_\ell$ and assume that $\mathcal{N}_Q \cap \Omega_0 \subset \Omega_\ell$. Since $\widehat{Q} \subset \mathcal{N}_Q$, applying \Cref{Obs: extension infinita y extension} yields 
	\(
	\tilde{Q} = \widehat{Q} \cap \Omega_0 \subset \mathcal{N}_Q \cap \Omega_0 \subset \Omega_\ell,
	\)
	which implies $Q \subset \omega_\ell$. Conversely, let $Q\in\QQ_\ell$ such that $Q\subset\omega_\ell$. Then $\widehat{Q} \cap \Omega_0= \tilde{Q}  \subset \Omega_\ell$. Since $\Omega_\ell$ consists of cells of level $\ell-1$, it follows that $\parent(Q^*) \subset \Omega_\ell$ for every $Q^* \in \QQ^\infty_\ell$ such that $Q^* \subset \widehat{Q} \cap \Omega_0$. Consequently, $\mathcal{N}_Q \cap \Omega_0 \subset \Omega_\ell$.
\end{proof}

We now establish the first characterization of weakly admissible hierarchical meshes.

\begin{theorem}[First WAHM characterization] \label{Teo: 1ra equivalencia MDA}
	A hierarchical mesh $\QQQ$ is a WAHM if and only if, for every cell $Q \in \QQ_\ell$ such that $Q \subset \omega_\ell$, it holds that $\mathcal{N}_{\parent(Q)} \cap \Omega_0 \subset \Omega_{\ell-1}$.
\end{theorem}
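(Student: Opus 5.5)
The plan is to reduce both directions to \Cref{Teo: Omega_ell y omega_ell}, applied at level $\ell-1$ to the cell $\parent(Q)\in\QQ_{\ell-1}$. That theorem says $\parent(Q)\subset\omega_{\ell-1}$ if and only if $\mathcal{N}_{\parent(Q)}\cap\Omega_0\subset\Omega_{\ell-1}$. Note that $\parent(Q)\subset\Omega_0$ whenever $Q\subset\Omega_0$, because $\Omega_0$ is a union of level-$0$ cells; for $\ell=1$ the parent lies at level $0$, and the argument is the same. The statement is therefore equivalent to: $\QQQ$ is a WAHM if and only if $\parent(Q)\subset\omega_{\ell-1}$ for every $Q\in\QQ_\ell$ with $Q\subset\omega_\ell$, for all $\ell=1,\dots,n-1$.

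For the direction ($\Rightarrow$), I repeat the argument of \Cref{extendido_parent_Q_incluido_Omega_ell-1}. Take $Q\subset\omega_\ell$. Weak admissibility gives $Q\subset\omega_{\ell-1}$. Since $\omega_{\ell-1}$ is a union of level-$(\ell-1)$ cells and $Q$ is a level-$\ell$ cell, $Q$ lies in one of them. By nestedness of dyadic cells that cell must be $\parent(Q)$, so $\parent(Q)\subset\omega_{\ell-1}$. Applying \Cref{Teo: Omega_ell y omega_ell} at level $\ell-1$ then yields $\mathcal{N}_{\parent(Q)}\cap\Omega_0\subset\Omega_{\ell-1}$.

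For the direction ($\Leftarrow$), fix $\ell$ and write $\omega_\ell$ as the union of the level-$\ell$ cells $Q$ with $\tilde Q\subset\Omega_\ell$. Every such $Q$ satisfies $Q\subset\omega_\ell$. By hypothesis, $\mathcal{N}_{\parent(Q)}\cap\Omega_0\subset\Omega_{\ell-1}$, and \Cref{Teo: Omega_ell y omega_ell} then gives $\parent(Q)\subset\omega_{\ell-1}$. Since $Q\subset\parent(Q)$, taking the union over all such $Q$ gives $\omega_\ell\subset\omega_{\ell-1}$, which is the WAHM condition.

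The only delicate point, and the step I expect to need the most care, is the bookkeeping of \Cref{Teo: Omega_ell y omega_ell} at level $\ell-1$. First, $\parent(Q)$ must be a genuine cell of $\QQ_{\ell-1}$ inside $\Omega_0$, which holds as noted above. Second, the theorem's proof uses the fact that $\Omega_{\ell-1}$ is a union of level-$(\ell-2)$ cells. For $\ell-1=0$ this is replaced by $\Omega_0=\Omega$, and then both conditions are trivially true. I would also check that the index range $\ell=1,\dots,n-1$ matches the one in \Cref{Def: Debilmente Admisible}.
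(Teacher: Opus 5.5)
Your proposal is correct and follows the paper's proof. In both, each direction reduces to \Cref{Teo: Omega_ell y omega_ell} applied to $\parent(Q)$ at level $\ell-1$, and the forward direction uses that weak admissibility forces $\parent(Q)\subset\omega_{\ell-1}$. The only cosmetic difference is the converse: you prove it directly by taking the union over the level-$\ell$ cells of $\omega_\ell$, whereas the paper argues by contrapositive from a cell $Q\subset\omega_\ell$ with $Q\not\subset\omega_{\ell-1}$.
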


\begin{proof}
	Let $Q \in \QQ_\ell$ such that $Q \subset \omega_\ell$. If $\QQQ$ is a WAHM, then $\parent(Q) \subset \omega_{\ell-1}$. By \Cref{Teo: Omega_ell y omega_ell}, this is equivalent to $\mathcal{N}_{\parent(Q)} \cap \Omega_0 \subset \Omega_{\ell-1}$.
	Conversely, suppose $\QQQ$ is not a WAHM. Then, for some level $\ell$, there exists $Q \in \QQ_\ell$ such that $Q \subset \omega_\ell$ but $Q \not\subset \omega_{\ell-1}$. Since $\parent(Q) \in \QQ_{\ell-1}$ and $\parent(Q) \not\subset \omega_{\ell-1}$, \Cref{Teo: Omega_ell y omega_ell} implies $\mathcal{N}_{\parent(Q)} \cap \Omega_0 \not\subset \Omega_{\ell-1}$.
\end{proof}

The second characterization relates the cells of $\Omega_\ell$ to the subdomain $\Omega_{\ell-1}$. In general, the condition $\mathcal{N}_Q \cap \Omega_0 \subset \Omega_{\ell-1}$ for all $Q \subset \Omega_\ell$ is not always satisfied by WAHMs, as it requires $\Omega_\ell$ to be free of \emph{isolated} deactivated cells (see \Cref{Fig: contraejemplo_malla_DA}, left). To address this, we restrict our focus to hierarchies that avoid such configurations.

\begin{figure}[H]
	\centering
	
	\begin{minipage}{0.32\textwidth}
		\centering
		\begin{tikzpicture}[scale=1.7]
			
			\fill[gray!50] (0.5,0.5) rectangle (0.7,0.7);
			
			\fill[gray!] (0.55,0.55) rectangle (0.65,0.65);
			
			\draw[thick] (0,0) rectangle (1.2,1.2);
			\foreach \x in {0.2, 0.4, 0.6, 0.8, 1} {
				\draw[thin] (\x,0) -- (\x,1.2);
			}
			\foreach \y in {0.2, 0.4, 0.6, 0.8, 1} {
				\draw[thin] (0,\y) -- (1.2,\y);
			}
			
			\foreach \x in {0.3, 0.5, 0.7, 0.9} {
				\draw[thin] (\x,0.2) -- (\x,1);
			}
			\foreach \y in {0.3, 0.5, 0.7, 0.9} {
				\draw[thin] (0.2,\y) -- (1,\y);
			}
			
			\foreach \x in {0.45, 0.55, 0.65, 0.75} {
				\draw[thin] (\x,0.4) -- (\x,0.8);
			}
			\foreach \y in {0.45, 0.55, 0.65, 0.75} {
				\draw[thin] (0.4,\y) -- (0.8,\y);
			}
			\draw[thin] (0.25,0.2) -- (0.25,0.3);
			\draw[thin] (0.2,0.25) -- (0.3,0.25);
		\end{tikzpicture}
	\end{minipage}
	\hfill
	\begin{minipage}{0.32\textwidth}
		\centering
		\begin{tikzpicture}[scale=2]
			\foreach \x in {0,1/8,2/8,3/8,4/8,5/8,6/8,7/8,1}{
				\draw[thin,gray!20] (\x,0) -- (\x,1);
				\draw[thin,gray!20] (0,\x) -- (1,\x);
			}
			
			\draw (1/4,1/4) rectangle (1,1);
			
			\foreach \x in {1/4, 2/4, 3/4} {
				\draw (\x,1/4) -- (\x,1);
				\draw (1/4,\x) -- (1,\x);
			}
			
			\foreach \x in {3/8, 5/8} {
				\draw (\x,1/4) -- (\x,3/4);
				\draw (1/4,\x) -- (3/4,\x);
			}
			
			\draw[red!] (0,0) rectangle (3/4,3/4);
			\node at (1.2,0.8) {\footnotesize $\Omega_0$};
			\node at (1.2,0.5) {\footnotesize $\Omega_1$};
			
		\end{tikzpicture}
	\end{minipage}
	\hfill
	\begin{minipage}{0.32\textwidth}
		\centering
		\begin{tikzpicture}[scale=2]
			
			\foreach \x in {0,1/8,2/8,3/8,4/8,5/8,6/8,7/8,1}{
				\draw[thin,gray!20] (\x,0) -- (\x,1);
				\draw[thin,gray!20] (0,\x) -- (1,\x);
			}
			
			\draw (1/4,1/4) rectangle (1,1);

			\foreach \x in {1/4, 2/4, 3/4} {
				\draw (\x,1/4) -- (\x,1);
				\draw (1/4,\x) -- (1,\x);
			}
			
			\foreach \x in {3/8, 5/8} {
				\draw (\x,1/4) -- (\x,2/4);
			}
			\foreach \x in {3/8} {
				\draw (1/4,\x) -- (3/4,\x);
			}
			
			\node at (1.2,0.8) {\footnotesize $\Omega_0$};
			\node at (1.2,0.4) {\footnotesize $\Omega_1$};
		\end{tikzpicture}
	\end{minipage}
	\caption{Left: A three-level WAHM with $\mathbf{p}=(3,3)$ containing an isolated deactivated cell of level $1$. Middle and right: Meshes with $\mathbf{p}=(2,2)$. The middle mesh arises from a clustered hierarchy because $\Omega_1$ is associated to a $\mathbf{p}$-form, whereas in the right mesh, $\Omega_1$ is merely a union of cells of level $0$.} \label{Fig: contraejemplo_malla_DA}
\end{figure}
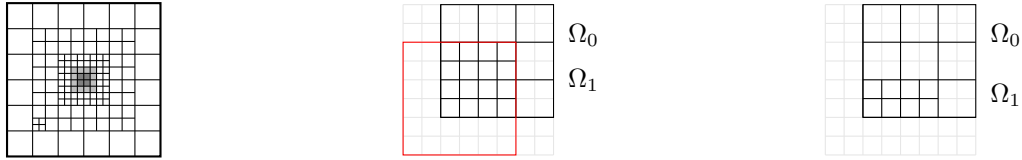

Isolated areas inside $\Omega_\ell$ that do not cover the support of at least one B-spline of level $\ell-1$ result in a hierarchical space lacking active functions at level $\ell$. For practical purposes, such cells can be excluded from $\Omega_\ell$. While more general hierarchies are discussed in \cite{VGJS2011, GJS2012}, Kraft \cite{Kraft} restricts subdomains to unions of B-spline supports. This ensures the space maintains representation power consistent with its refinement level.

\begin{definition}[Clustered hierarchy of subdomains] \label{Def: Jerearquia subdominio agrupada}
	A hierarchy $\mathbf{\Omega}_n$ is \emph{clustered} if, for every $\ell$, there exists an index set $J$ such that
	\(
	\Omega_{\ell} = \bigcup_{i \in J} (C_i \cap \Omega_0),
	\)
	where $C_i \in \mathbf{F}^\infty_{\mathbf{p},\ell-1}$ are ${\bf p}$-forms of level $\ell-1$.
\end{definition}

This definition ensures that $\Omega_\ell$ is a union of supports of B-splines of level $\ell-1$, localized to $\Omega_0$ (see \Cref{Fig: contraejemplo_malla_DA}, middle). Under this assumption, we can derive alternative representations for $\Omega_\ell$.

\begin{theorem}\label{Teo: jerarquia de subdominio agrupada}
	In a clustered hierarchy $\mathbf{\Omega}_n$, the following identities hold for every $\ell=1,\dots,n-1$:
	\begin{equation}\label{Eq: Omega_ell_desde_omega_ell_1}
		\Omega_\ell = \bigcup_{Q \in \QQ_\ell, Q \subset \omega_\ell} (\mathcal{N}_{Q} \cap \Omega_0) = \bigcup_{Q \in \QQ_\ell, Q \subset \omega_\ell} \tilde{Q}.
	\end{equation}
\end{theorem}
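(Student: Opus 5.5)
The natural plan is a cycle of three inclusions. Write $A := \bigcup_{Q \in \QQ_\ell,\, Q \subset \omega_\ell} (\mathcal{N}_Q \cap \Omega_0)$ and $B := \bigcup_{Q \in \QQ_\ell,\, Q \subset \omega_\ell} \tilde{Q}$. It then suffices to show
\[
\Omega_\ell \subset B \subset A \subset \Omega_\ell .
\]
Two of these are immediate from earlier results.

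The inclusion $A \subset \Omega_\ell$ is exactly the forward direction of \Cref{Teo: Omega_ell y omega_ell}: every cell $Q \in \QQ_\ell$ with $Q \subset \omega_\ell$ satisfies $\mathcal{N}_Q \cap \Omega_0 \subset \Omega_\ell$. Taking the union over such $Q$ gives the claim.

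The inclusion $B \subset A$ follows from $\widehat{Q} \subset \mathcal{N}_Q$ (\Cref{parent_extendido_Q_incluido_extendido_Parent_Q}) together with $\tilde Q = \widehat{Q} \cap \Omega_0$ (\Cref{Obs: extension infinita y extension}). Together these give $\tilde Q \subset \mathcal{N}_Q \cap \Omega_0$ for each $Q$.

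The substantive step is $\Omega_\ell \subset B$, and this is where clusteredness enters. By \Cref{Def: Jerearquia subdominio agrupada} we write $\Omega_\ell = \bigcup_{i \in J} (C_i \cap \Omega_0)$ with each $C_i \in \mathbf{F}^\infty_{\mathbf{p},\ell-1}$. Fix one such $C = C_i$. By \Cref{Prop: representacion de supp B-splines}\ref{item3},
\[
C \cap \Omega_0 = \bigcup_{Q_j \in \core(C),\, Q_j \subset \Omega_0} \widehat{Q_j} \cap \Omega_0 = \bigcup_{Q_j \in \core(C),\, Q_j \subset \Omega_0} \tilde{Q_j},
\]
where the second equality uses \Cref{Obs: extension infinita y extension} again. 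It then remains to check that each core cell $Q_j \subset \Omega_0$ is a cell of $\QQ_\ell$ lying in $\omega_\ell$. By \Cref{Prop: representacion de supp B-splines}\ref{item1}, $\mathcal{N}_{Q_j} = C$, so
\[
\mathcal{N}_{Q_j} \cap \Omega_0 = C \cap \Omega_0 \subset \Omega_\ell .
\]
The reverse direction of \Cref{Teo: Omega_ell y omega_ell} then gives $Q_j \subset \omega_\ell$. Hence $C \cap \Omega_0 \subset B$, and taking the union over $i \in J$ yields $\Omega_\ell \subset B$.

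I expect the main obstacle to be this last step. The difficulty is not conceptual; it is making sure the boundary bookkeeping is right. The core cells of a $\mathbf{p}$-form that lies partly outside $\Omega_0$ need not all belong to $\Omega_0$. This is precisely why part \ref{item3} of \Cref{Prop: representacion de supp B-splines} is needed: the core cells outside $\Omega_0$ contribute nothing new. One also has to confirm that \Cref{Teo: Omega_ell y omega_ell} applies to those core cells, i.e.\ that a cell of $\QQ^\infty_\ell$ contained in $\Omega_0$ is a cell of $\QQ_\ell$, which is immediate. The case $\Omega_\ell = \emptyset$ (empty $J$) is trivial, since then all three sets are empty.
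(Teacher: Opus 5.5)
Your proposal is correct and follows essentially the same route as the paper. The paper also gets the forward inclusions from $\tilde{Q} \subset \mathcal{N}_Q \cap \Omega_0$ together with \Cref{Teo: Omega_ell y omega_ell}, and the reverse inclusion by writing each $C_i$ of the clustered representation as $\mathcal{N}_{Q}$ for its core cells and applying the same characterization; your explicit restriction to core cells contained in $\Omega_0$, via \Cref{Prop: representacion de supp B-splines}\ref{item3}, makes precise a step the paper leaves implicit.
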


\begin{proof}
	For any $Q \in \QQ_\ell$, we have $\tilde{Q} \subset \mathcal{N}_Q \cap \Omega_0$. If $Q \subset \omega_\ell$, then $\mathcal{N}_Q \cap \Omega_0 \subset \Omega_\ell$ by \Cref{Teo: Omega_ell y omega_ell}, which confirms the forward inclusions.
	Conversely, let $C_i \cap \Omega_0 \subset \Omega_\ell$ for some $i \in J$. By \Cref{Prop: representacion de supp B-splines}, $C_i = \mathcal{N}_{Q_k^i}$ for $Q_k^i \in \core(C_i)$. Since $\mathcal{N}_{Q_k^i} \cap \Omega_0 \subset \Omega_\ell$, \Cref{Teo: Omega_ell y omega_ell} implies $Q_k^i \subset \omega_\ell$. The reverse inclusions then follow from the fact that $C_i \cap \Omega_0 = \bigcup \tilde{Q}_k^i$.
\end{proof}

\begin{theorem}[Second WAHM characterization] \label{Equivalencia_Mallas_DA}
	Let $\QQQ$ be constructed from a clustered hierarchy. Then $\QQQ$ is a WAHM if and only if, for every cell $Q \in \QQ_\ell$ such that $Q \subset \Omega_\ell$, it holds that $\mathcal{N}_Q \cap \Omega_0 \subset \Omega_{\ell-1}$.
\end{theorem}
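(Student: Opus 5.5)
We prove the two implications separately. Both rely on the representation of $\Omega_\ell$ in \Cref{Teo: jerarquia de subdominio agrupada} (valid because the hierarchy is clustered) and on the characterizations \Cref{Teo: Omega_ell y omega_ell} and \Cref{Teo: 1ra equivalencia MDA}.

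\emph{($\Leftarrow$)} Assume that $\mathcal{N}_Q\cap\Omega_0\subset\Omega_{\ell-1}$ for every $Q\in\QQ_\ell$ with $Q\subset\Omega_\ell$. Let $Q\in\QQ_\ell$ with $Q\subset\omega_\ell\subset\Omega_\ell$. We must show $Q\subset\omega_{\ell-1}$, which by \Cref{Teo: Omega_ell y omega_ell} at level $\ell-1$ means $\mathcal{N}_{\parent(Q)}\cap\Omega_0\subset\Omega_{\ell-1}$. The plan is as follows.
\begin{itemize}
\item By \Cref{C_Q*_incluido_extendido_parent_Q} and \Cref{parent_extendido_Q_incluido_Omega_ell-1}, $\mathcal{N}_{\parent(Q)}\subset\widehat{\parent(Q)}=\bigcup\{\mathcal{N}_{Q^*}: Q^*\in\QQ^\infty_\ell,\ Q^*\subset\widehat Q\}$.
\item Intersect with $\Omega_0$. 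Only cells $Q^*\subset\widehat Q\cap\Omega_0=\tilde Q\subset\Omega_\ell$ should contribute, since $Q\subset\omega_\ell$. For each such $Q^*$ the hypothesis gives $\mathcal{N}_{Q^*}\cap\Omega_0\subset\Omega_{\ell-1}$.
\end{itemize}
The delicate point is the cells $Q^*\subset\widehat Q$ lying outside $\Omega_0$, near the boundary. Their neighborhoods $\mathcal{N}_{Q^*}$ may still meet $\Omega_0$. I will handle them with a reflection-type argument in the spirit of \Cref{Prop: representacion de supp B-splines}\ref{item3}. Because $\Omega_0$ is a box and $Q^*\subset\widehat Q$ with $Q\subset\Omega_0$, one can choose a cell $Q^{**}\subset\widehat Q\cap\Omega_0$ with $\mathcal{N}_{Q^*}\cap\Omega_0\subset\mathcal{N}_{Q^{**}}\cap\Omega_0$: clamp each coordinate index of $Q^*$ into the range of $\Omega_0$. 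This covers the trimmed $\mathcal{N}_{\parent(Q)}\cap\Omega_0$ by sets inside $\Omega_{\ell-1}$, so $\parent(Q)\subset\omega_{\ell-1}$. Then \Cref{Teo: 1ra equivalencia MDA}, or directly the definition, gives WAHM.

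\emph{($\Rightarrow$)} Assume $\QQQ$ is a WAHM and let $Q\in\QQ_\ell$ with $Q\subset\Omega_\ell$. The steps are:
\begin{itemize}
\item By \Cref{Teo: jerarquia de subdominio agrupada}, $Q\subset\tilde{Q'}\subset\mathcal{N}_{Q'}\cap\Omega_0$ for some $Q'\in\QQ_\ell$ with $Q'\subset\omega_\ell$.
\item Weak admissibility gives $\parent(Q')\subset\omega_{\ell-1}$, hence $\widetilde{\parent(Q')}\subset\Omega_{\ell-1}$ by \Cref{extendido_parent_Q_incluido_Omega_ell-1}, and by \Cref{Teo: Omega_ell y omega_ell}, $\mathcal{N}_{\parent(Q')}\cap\Omega_0\subset\Omega_{\ell-1}$.
\item It remains to show $\mathcal{N}_Q\cap\Omega_0\subset\widetilde{\parent(Q')}$, or better $\subset\bigcup\{\widetilde{P}: P\in\QQ_{\ell-1},\,P\subset\omega_{\ell-1}\}$, which lies in $\Omega_{\ell-1}$ by definition of $\omega_{\ell-1}$.
\end{itemize}
For the last step, $Q\subset\mathcal{N}_{Q'}$ gives $\parent(Q)\subset\mathcal{N}_{Q'}$. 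Since $\mathcal{N}_{Q'}\in\mathbf{B}^\infty_{\mathbf p,\ell-1}$ is the support of a level-$(\ell-1)$ B-spline, and $Q'$ is one of its core cells, I expect $\parent(Q')$ to be a central cell of that support. Hence $\parent(Q)$ lies within index distance $\lceil p/2\rceil$ of $\parent(Q')$ in each direction. $\mathcal{N}_Q$ has $p+1$ level-$(\ell-1)$ cells per direction and contains $\parent(Q)$, so it stays within $\widehat{\parent(Q')}$ only if the offsets add up correctly. This counting is where the argument may fail.

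If the direct count does not close, I will instead use all cells $Q''\in\core(\mathcal{N}_{Q'})$ together with their parents, which belong to $\omega_{\ell-1}$ after intersecting with $\Omega_0$ as in \Cref{Prop: representacion de supp B-splines}\ref{item3}. These $2^d$ parent cells form the central block of $\mathcal{N}_{Q'}$. The union of their support extensions is a box of $2p+2$ cells per side, which covers any $(p+1)$-box containing a cell of $\mathcal{N}_{Q'}$. This index counting, together with the boundary trimming by $\Omega_0$, is the main obstacle in both directions. I will carry it out in $d=2$ coordinates, as the paper does.
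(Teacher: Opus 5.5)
Your $(\Leftarrow)$ half is, in substance, the paper's argument run directly instead of by contrapositive. Your coordinate-clamping step is a real improvement: it justifies restricting the union in \eqref{Eq: extension padre de Q local} to cells inside $\Omega_0$, which the paper asserts without comment. There is one slip. You write $\mathcal{N}_{\parent(Q)}\subset\widehat{\parent(Q)}$, but \Cref{parent_extendido_Q_incluido_extendido_Parent_Q} at level $\ell-1$ gives the reverse inclusion: $\widehat{\parent(Q)}$ has $2p+1$ level-$(\ell-1)$ cells per side, while $\mathcal{N}_{\parent(Q)}$ has $2p+2$. So covering $\widehat{\parent(Q)}\cap\Omega_0$ does not cover $\mathcal{N}_{\parent(Q)}\cap\Omega_0$. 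Instead aim for $\widetilde{\parent(Q)}=\widehat{\parent(Q)}\cap\Omega_0\subset\Omega_{\ell-1}$, which is the definition of $\parent(Q)\subset\omega_{\ell-1}$; the rest of your argument then goes through.

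The genuine gap is $(\Rightarrow)$, which you leave unresolved. It does not close because you discard information you already have: you obtain $Q\subset\tilde{Q'}$ but then use only the weaker $Q\subset\mathcal{N}_{Q'}$. With that alone the inclusion you need is false. Take $d=1$, $p=3$, and $Q'$ the level-$\ell$ cell with index $20$:
\begin{itemize}
\item $\widehat{Q'}$ consists of level-$\ell$ cells $17,\dots,23$, and $\mathcal{N}_{Q'}$ of level-$(\ell-1)$ cells $8,\dots,11$, i.e.\ level-$\ell$ cells $16,\dots,23$.
\item For $Q$ the cell $16$, $\mathcal{N}_Q$ consists of level-$(\ell-1)$ cells $6,\dots,9$.
\item But $\widehat{\parent(Q')}$ consists only of cells $7,\dots,13$.
\end{itemize}
Your fallback covering claim is also false as stated. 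Here $\core(\mathcal{N}_{Q'})=\{19,20\}$, with parents $9$ and $10$, whose support extensions cover cells $6,\dots,13$. Yet the $(p+1)$-box $5,\dots,8$ contains cell $8$ of $\mathcal{N}_{Q'}$ and is not covered. Moreover, for even $p$ the core cells share a single parent, so there are not $2^d$ of them. A repaired fallback would need the fact that $\mathcal{N}_Q$ is essentially centred at $\parent(Q)$, plus a separate boundary argument.

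The missing idea takes one line: keep $Q\subset\tilde{Q'}\subset\widehat{Q'}$ and apply \Cref{C_Q*_incluido_extendido_parent_Q} to $Q'$. Since $\widehat{\parent(Q')}$ is the union of the $\mathcal{N}_{Q^*}$ over $Q^*\subset\widehat{Q'}$, and $Q$ is one such $Q^*$, you get $\mathcal{N}_Q\subset\widehat{\parent(Q')}$ immediately. Then \Cref{Obs: extension infinita y extension} and \Cref{extendido_parent_Q_incluido_Omega_ell-1} give $\mathcal{N}_Q\cap\Omega_0\subset\widetilde{\parent(Q')}\subset\Omega_{\ell-1}$. This uses only the ``$\supseteq$'' half of the union identity, so no index counting or boundary trimming is needed. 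It is exactly the paper's proof.
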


\begin{proof}
	Suppose $\QQQ$ is a WAHM and let $Q^* \in \QQ_\ell$ such that $Q^* \subset \Omega_\ell$. By \eqref{Eq: Omega_ell_desde_omega_ell_1}, there exists $Q \in \QQ_\ell$ with $Q \subset \omega_\ell$ such that $Q^* \subset \tilde{Q}$. Taking into account Proposition \ref{C_Q*_incluido_extendido_parent_Q} and Remark \ref{Obs: extension infinita y extension}, we obtain that 
	\begin{equation} \label{Eq: extension padre de Q local}
		\widetilde{\parent(Q)} = \bigcup_{\substack{Q'\in\QQ_\ell,\\ Q'\subset \tilde{Q}}}( \mathcal{N}_{Q'} \cap \Omega_0).
	\end{equation}
Using the last identity, it follows that $\mathcal{N}_{Q^*} \cap \Omega_0 \subset \widetilde{\parent(Q)}$. Since the mesh is a WAHM, $\widetilde{\parent(Q)} \subset \Omega_{\ell-1}$, and thus $\mathcal{N}_{Q^*} \cap \Omega_0 \subset \Omega_{\ell-1}$.
	The converse follows by contrapositive: if $\QQQ$ is not a WAHM, there exists $Q \in \QQ_\ell$ such that $Q \subset \omega_\ell$ but $\widetilde{\parent(Q)} \not\subset \Omega_{\ell-1}$. The identity~\eqref{Eq: extension padre de Q local} then implies that for at least one $Q^* \subset \tilde{Q} \subset \Omega_\ell$, we must have $\mathcal{N}_{Q^*} \cap \Omega_0 \not\subset \Omega_{\ell-1}$.
\end{proof}

		\section{A new adaptive refinement strategy for HB-splines}\label{S:algorithm}

			In this section, we present an adaptive refinement algorithm applied to a weakly admissible mesh (WAHM) constructed from a clustered hierarchy of subdomains. This algorithm preserves the structure of the initial mesh, similar to the one presented in \cite{BGV2018} for strictly admissible meshes of class $m$. First, we recall the standard refinement procedure for hierarchical splines using a enlargement of the hierarchy of subdomains.

			\begin{definition}[Refinement of a hierarchical mesh]\label{def: construccion de refinamiento}
				Let $\mathbf{\Omega}_n:=\{\Omega_0,\dots,\Omega_n\}$ be a hierarchy of subdomains and $\QQQ \equiv \QQQ({\bf\Omega}_n)$ the associated hierarchical mesh. If $\MMM = \{\MM_\ell\}_{\ell=0}^{n-1}$, where $\mathcal{M}_\ell\subset \QQQ\cap\QQ_\ell$ is a set of \emph{marked cells of level $\ell$}, for $\ell=0,\dots,n-1$, we define the refined mesh $\QQQ^* \equiv \QQQ^*({\bf\Omega}_{n+1}^*)$ associated to the enlargement $\mathbf{\Omega}_{n+1}^*:=\{\Omega^*_0,\dots,\Omega^*_{n+1}\}$ given by
				\begin{equation*} \label{Eq: refinamiento por construccion}
					\begin{cases}
						\Omega^*_0:=\Omega_0, \\
						\Omega^*_\ell:=\Omega_\ell \cup \{Q \: |\: Q\in \mathcal{M}_{\ell-1}\}, \quad \ell=1,\dots, n,\\
						\Omega^*_{n+1}= \emptyset.
					\end{cases}
				\end{equation*}
			\end{definition}
		Notice that $\Omega^*_\ell$ is obtained from $\Omega_\ell$ by adding the marked cells of level $\ell-1$. We also remark that the hierarchical spaces generated by the hierarchical bases $\mathcal{H}$ and $\mathcal{H}^*$, associated with the hierarchies of subdomains $\mathbf{\Omega}_n$ and $\mathbf{\Omega}^*_{n^*}$, respectively, satisfy \(\Span\mathcal{H}\subset\Span\mathcal{H}^*\)~(cf.~\cite[Proposition 4]{VGJS2011}).

			Let us consider a weakly admissible hierarchical mesh $\QQQ$, where $\mathbf{\Omega}_n = \{\Omega_\ell\}_{\ell=0}^n$ is a clustered hierarchy of subdomains, and let $\MMM = \{\MM_\ell\}_{\ell=0}^{n-1}$ be a subset of active cells in $\QQQ$ given by
			\(
			\MM_\ell\subset\QQ_{\ell}\cap\QQQ.
			\)
			Prior to the standard refinement, we reorder and swap (if necessary) the cells in  $\MMM$ according to a specific property of the cells and the mesh $\QQQ$. This characteristic allows us to quantify the local approximation power of the hierarchical space. We formalize this property in the following definition and remark before detailing the full procedure in~\Cref{alg:update_marked_elements}.

			\begin{definition}[Approximation power of a cell]\label{Def: poder de proximacion}
				Let $Q\in\QQ_{\ell}$ such that $Q\subset\Omega_\ell$, for $\ell=0,1,\dots,n-1$. We say that $Q$ \emph{has approximation power $k$ in $\QQQ$}, and we write $Q\in\AAA_k(\QQQ)$, if $k$ is the largest index such that
				\(
				Q\subset\omega_k
				\)
				and
				\(
				Q\not\subset\omega_{k+1}.
				\)
				We say that a cell $Q\in\QQ_\ell$ is optimal or has optimal approximation power if $Q\in\AAA_\ell(\QQQ)$, and we say it is suboptimal or has suboptimal approximation power if $Q\in\AAA_{\ell-1}(\QQQ)$.
			\end{definition}
			
			\begin{remark}\label{obs: poder aproximacion activas}
				Let $Q\in\QQ_{\ell}$ be an active cell, then $Q\in\AAA_k(\QQQ)$ for some $k\leq \ell$. Indeed, since $Q$ is an active cell of level $\ell$, we have that $Q \not\subset \Omega_j$, and consequently, $Q \not\subset \omega_j$ for all $j > \ell$. Therefore, the approximation power of $Q$ is at most $\ell$.
			\end{remark}

			\begin{algorithm}[H]
				\footnotesize 
				\caption{update\_marked\_elements}\label{alg:update_marked_elements}
				\begin{algorithmic}[1]
					\Statex \textbf{Input:} $\{\QQQ, \MMM\}$
					
					\For {$\ell = n-1,\dots, 1$}
					\State Split $\MM_ \ell$ into three parts:
					\begin{itemize}
						\item $\Muno{\ell}:=\{Q\in\MM_\ell\mid Q\subset \omega_{\ell}\}$.
						\item $\Mdos{\ell}:=\{Q\in\MM_\ell\mid Q\not\subset \omega_{\ell} \wedge \parent(Q)\subset\omega_{\ell-1}\}$.
						\item $\Mtres{\ell}:=\{Q\in\MM_\ell\mid Q\not\subset \omega_{\ell} \wedge \parent(Q)\not\subset\omega_{\ell-1}\}$.
					\end{itemize}
					\State $\MM_{\ell-1} \gets \MM_{\ell-1} \cup\{\parent(Q)\mid Q\in\Mtres{\ell}\}$
					\EndFor
					\State $\Muno{0} \gets \MM_{0}$
					\State $\Mdos{0}\gets \emptyset$
					\Statex \textbf{Output:} $\MMMuno{} = \{\Muno{\ell}\}_{\ell=0}^{n-1}, \; \MMMdos{} = \{\Mdos{\ell}\}_{\ell=0}^{n-1}$
				\end{algorithmic}
			\end{algorithm}
			
			\begin{remark}\label{ob: cardinal de M}
				Note that the initial number of marked cells in $\MMM$ and the number of cells obtained from \Cref{alg:update_marked_elements} satisfy
				\(
				\#(\MMMuno{}\cup\MMMdos{}) \leq \# \MMM.
				\)
				This is because when two marked active cells share the same lower-level parent, \Cref{alg:update_marked_elements} replaces them with the same single cell.
			\end{remark}
			
			The following proposition outlines the key properties of $\MMMuno{}$ and $\MMMdos{}$. Their proof is straightforward from the construction in Algorithm~\ref{alg:update_marked_elements} and is therefore omitted.

			\begin{proposition}\label{Prop: celdas M1 y M2}
				Let $\MMM = \{\MM_\ell\}_{\ell=0}^{n-1}$ be a set of active cells in $\QQQ$ and $\MMMuno{} = \{\Muno{\ell}\}_{\ell=0}^{n-1}$, $\MMMdos{} = \{\Mdos{\ell}\}_{\ell=0}^{n-1}$ be the outputs of \Cref{alg:update_marked_elements}. Then:
				\begin{enumerate}[label=(\roman*)]
					\item \label{item1 Prop: celdas M1 y M2} The cells in $\Muno{\ell}$ are active cells of level $\ell$ and have approximation power $\ell$.
					\item \label{item2 Prop: celdas M1 y M2} The cells in $\Mdos{\ell}$ are active or deactivated cells of level $\ell$ and have approximation power $\ell-1$.
				\end{enumerate}
			\end{proposition}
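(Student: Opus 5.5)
I will argue by tracking how each marked cell moves through the loop of \Cref{alg:update_marked_elements}, and then reading off the approximation power of each cell in the output from \Cref{Def: poder de proximacion}, using the nesting $\omega_\ell\subset\omega_{\ell-1}$ given by weak admissibility.

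First I will show that at the moment $\MM_\ell$ is split (iteration $\ell$), every cell in it lies in $\Omega_\ell$. The cells of $\MM_\ell$ are either originally marked active cells, which satisfy $Q\subset\Omega_\ell$ by definition of $\QQQ$, or parents $\parent(Q')$ added at iteration $\ell+1$ with $Q'\in\Mtres{\ell+1}$. In the second case, $Q'\subset\Omega_{\ell+1}\subset\Omega_\ell$, and $\Omega_\ell$ is a union of cells of level $\ell-1$, hence also of level $\ell$. Since $Q'$ lies in the level-$\ell$ cell $\parent(Q')$ and in $\Omega_\ell$, it follows that $\parent(Q')\subset\Omega_\ell$.

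Once this holds, \Cref{Def: poder de proximacion} applies to every cell examined by the algorithm. For $Q\in\Muno{\ell}$ we have $Q\subset\omega_\ell$. By \Cref{obs: poder aproximacion activas}, or simply because $Q\not\subset\Omega_{\ell+1}\supset\omega_{\ell+1}$ whenever $Q$ is active, the power is exactly $\ell$. For $Q\in\Mdos{\ell}$ we have $Q\not\subset\omega_\ell$, so the WAHM nesting gives $Q\not\subset\omega_k$ for all $k\ge\ell$. Moreover $Q\subset\parent(Q)\subset\omega_{\ell-1}$, so the power is exactly $\ell-1$. For $\ell=0$, $\Muno{0}=\MM_0$ and $\omega_0=\Omega_0$, so every level-$0$ cell trivially has power $0$, and $\Mdos{0}=\emptyset$.

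It remains to settle the active/deactivated status claimed in (i) and (ii). The main obstacle is (i): cells of $\Muno{\ell}$ that came from parents $\parent(Q')$ with $Q'\in\Mtres{\ell+1}$ are deactivated, since they are contained in $\Omega_{\ell+1}$. So I must show such parents never land in $\Muno{\ell}$, i.e.\ $\parent(Q')\not\subset\omega_\ell$. This is exactly the defining condition of $\Mtres{\ell+1}$, namely $\parent(Q')\not\subset\omega_{\ell}$. Consequently, added parents always fall into $\Mdos{\ell}$ or $\Mtres{\ell}$ (hence the ``active or deactivated'' phrasing in (ii)), and $\Muno{\ell}$ consists only of originally marked active cells. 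For $\ell=0$ the same condition shows that no added parent lies in $\omega_0=\Omega_0$. This is impossible for a level-$0$ cell, so at $\ell=1$ we have $\Mtres{1}=\emptyset$ and $\MM_0$ keeps only active cells.
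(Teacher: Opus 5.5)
Your argument is correct and follows the direct tracking through \Cref{alg:update_marked_elements} that the paper has in mind when it omits the proof as straightforward. It even checks a point the paper glosses over: added parents lie in $\Omega_\ell$, so \Cref{Def: poder de proximacion} applies to them. One small imprecision is that at $\ell=0$ it is the \emph{active} level-$0$ cells, not every level-$0$ cell, that have power exactly $0$, since a deactivated one may lie in $\omega_1$; this causes no harm because your last paragraph shows $\MM_0$ receives no added parents.
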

			
			After redefining $\MMM$ via~\Cref{alg:update_marked_elements}, the subsequent section addresses the selection of a potentially larger set of cells for actual refinement. This process generates the new mesh $\QQQ^*$, preserving the underlying structure of $\QQQ$ while enriching the spline space on the initially marked cells.

			\subsection{Design, description and analysis of the algorithm}
			
			In this section we develop a procedure to refine a WAHM $\QQQ$ so that the new mesh $\QQQ^*$ remains weakly admissible and all active cells selected in $\MMM$ increase their approximation power by one with respect to the mesh $\QQQ$.
			
			After applying \Cref{alg:update_marked_elements} to $\MMM$ we will proceed as follows. For each $\ell=0,1,\dots, n-1$, we will refine all cells in $\Muno{\ell}$ (and some additional ones) so that their children achieve an approximation power of $\ell+1$ (see Figure \ref{Fig: M_opt descripcion}).

			\begin{figure}[H]
				\centering
				
				\begin{minipage}{0.3\textwidth}
					\centering
					\begin{tikzpicture}[scale=3]
						
						\fill[gray! 30] (5/8,0) rectangle (1,3/8);
						
						\fill[gray! 80] (11/16,0) rectangle (1,5/16);
						
						\fill[red!] (11/16,2/8) rectangle (12/16,5/16);
						
						\draw (0,0) rectangle (1,1);
						\foreach \x in {1/4, 2/4, 3/4}{
							\draw (\x,0) -- (\x,1);
							\draw (0,\x) -- (1,\x);}

						\foreach \x in {3/8, 5/8, 6/8, 7/8}{
							\draw (\x,0) -- (\x,3/4);
							\draw (1/4,1-\x) -- (1,1-\x);}
						
						\foreach \x in {9/16, 11/16, 13/16, 14/16, 15/16}{
							\draw (\x,0) -- (\x,4/8);
							\draw (4/8,1-\x) -- (1,1-\x);}

					\end{tikzpicture}
					\vspace{0.2em}
					
				\end{minipage}
				\hspace{1em}
				\begin{minipage}{0.3\textwidth}
					\centering
					\begin{tikzpicture}[scale=3]
						
						\fill[red!] (0,0) rectangle (1,1);
						\fill[white!] (1/4,0) rectangle (1,3/4);
						\fill[red!] (3/8,0) rectangle (1,5/8);
						\fill[white!] (1/2,0) rectangle (1,1/2);
						
						\fill[gray! 30] (5/8,0) rectangle (1,3/8);
						\fill[gray! 80] (11/16,0) rectangle (1,5/16);
						
						\fill[red!] (9/16,1/8) rectangle (7/8,7/16);
						
						\draw (0,0) rectangle (1,1);
						\foreach \x in {1/4, 2/4, 3/4}{
							\draw (\x,0) -- (\x,1);
							\draw (0,\x) -- (1,\x);}

						\foreach \x in {3/8, 5/8, 6/8, 7/8}{
							\draw (\x,0) -- (\x,3/4);
							\draw (1/4,1-\x) -- (1,1-\x);}
						
						\foreach \x in {9/16, 11/16, 13/16, 14/16, 15/16}{
							\draw (\x,0) -- (\x,4/8);
							\draw (4/8,1-\x) -- (1,1-\x);}
						
					\end{tikzpicture}
					\vspace{0.2em}
					
				\end{minipage}
				\hspace{1em}
				\begin{minipage}{0.3\textwidth}
					\centering
					\begin{tikzpicture}[scale=3]
						
						\fill[gray! 30] (0,0) rectangle (1,1);
						
						\fill[gray! 80] (9/16,0) rectangle (1,7/16);
						
						\fill[gray! 120] (21/32,7/32) rectangle (25/32,11/32);
						
						\draw (0,0) rectangle (1,1);
						
						\foreach \x in {1/4, 2/4, 3/4}{
							\draw (\x,0) -- (\x,1);
							\draw (0,\x) -- (1,\x);}
						
						\foreach \x in {1/8, 3/8, 5/8, 6/8, 7/8}{
							\draw (\x,0) -- (\x,1);
							\draw (0,1-\x) -- (1,1-\x);}
						
						\foreach \x in {7/16, 9/16, 11/16, 13/16, 14/16, 15/16}{
							\draw (\x,0) -- (\x,5/8);
							\draw (3/8,1-\x) -- (1,1-\x);}
						
						\foreach \x in {19/32, 21/32, 23/32, 25/32, 27/32}{
							\draw (\x,1/8) -- (\x,14/32);
							\draw (9/16,1-\x) -- (7/8,1-\x);}
						
					\end{tikzpicture}
					\vspace{0.2em}
					
				\end{minipage}
				
				\caption{Considering $\mathbf{p}=(3,3)$, the initial mesh, on the left, is weakly admissible and contains a marked cell with optimal approximation power. The central figure indicates the cells that must be refined so that the initially marked cell increases its approximation power in the new mesh while ensuring that the mesh remains weakly admissible. Finally, the figure on the right shows the resulting mesh after this refinement process.}
				\label{Fig: M_opt descripcion}
			\end{figure}
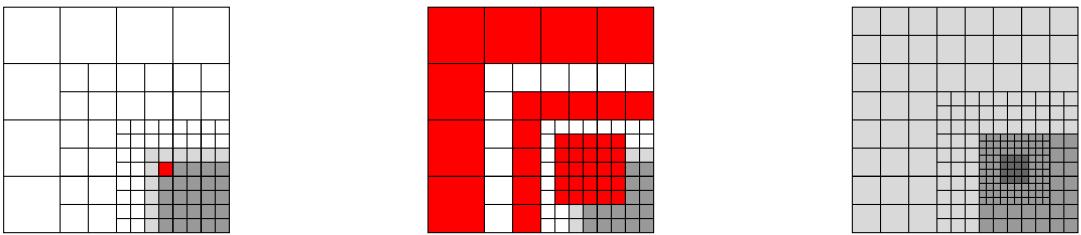
			
			We must also refine the cells of level $\ell-1$ surrounding the cells in $\Mdos{\ell}$ so that, upon completion, they increase their approximation power. Note that marking a cell does not guarantee that it will be refined during the process. This is the case for the cells in $\MMMdos{}$ (see Figure \ref{Fig: M_subopt descripcion}). However, they might be refined as a consequence of processing another marked cell.

			\begin{figure}[ht]
				\centering
				
				\begin{minipage}{0.3\textwidth}
					\centering
					\begin{tikzpicture}[scale=3]
						
						\fill[gray! 30] (5/8,0) rectangle (1,3/8);
						
						\fill[gray! 80] (11/16,0) rectangle (1,5/16);
						
						\fill[red!] (10/16,5/16) rectangle (11/16,6/16);
						
						\draw (0,0) rectangle (1,1);
						\foreach \x in {1/4, 2/4, 3/4}{
							\draw (\x,0) -- (\x,1);
							\draw (0,\x) -- (1,\x);}

						\foreach \x in {3/8, 5/8, 6/8, 7/8}{
							\draw (\x,0) -- (\x,3/4);
							\draw (1/4,1-\x) -- (1,1-\x);}
						
						\foreach \x in {9/16, 11/16, 13/16, 14/16, 15/16}{
							\draw (\x,0) -- (\x,4/8);
							\draw (4/8,1-\x) -- (1,1-\x);}

					\end{tikzpicture}
					\vspace{0.2em}
					
				\end{minipage}
				\hspace{1em}
				\begin{minipage}{0.3\textwidth}
					\centering
					\begin{tikzpicture}[scale=3]
						
						\fill[red!] (0,0) rectangle (1,1);
						\fill[white!] (1/4,0) rectangle (1,3/4);
						\fill[red!] (3/8,0) rectangle (1,5/8);
						\fill[white!] (1/2,0) rectangle (1,1/2);
						\fill[white!] (3/8,0) rectangle (4/8,1/8);
						\fill[white!] (7/8,1/2) rectangle (1,5/8);
						
						\fill[gray! 30] (5/8,0) rectangle (1,3/8);
						\fill[gray! 80] (11/16,0) rectangle (1,5/16);

						\draw (0,0) rectangle (1,1);
						\foreach \x in {1/4, 2/4, 3/4}{
							\draw (\x,0) -- (\x,1);
							\draw (0,\x) -- (1,\x);}

						\foreach \x in {3/8, 5/8, 6/8, 7/8}{
							\draw (\x,0) -- (\x,3/4);
							\draw (1/4,1-\x) -- (1,1-\x);}
						
						\foreach \x in {9/16, 11/16, 13/16, 14/16, 15/16}{
							\draw (\x,0) -- (\x,4/8);
							\draw (4/8,1-\x) -- (1,1-\x);}
						
					\end{tikzpicture}
					\vspace{0.2em}
					
				\end{minipage}
				\hspace{1em}
				\begin{minipage}{0.3\textwidth}
					\centering
					\begin{tikzpicture}[scale=3]
						
						\fill[gray! 30] (0,0) rectangle (1,1);
						
						\fill[gray! 80] (11/16,0) rectangle (1,5/16);
						
						\fill[gray! 80] (9/16,5/16) rectangle (11/16,7/16);
						
						\draw (0,0) rectangle (1,1);
						
						\foreach \x in {1/4, 2/4, 3/4}{
							\draw (\x,0) -- (\x,1);
							\draw (0,\x) -- (1,\x);}
						
						\foreach \x in {1/8, 3/8, 5/8, 6/8, 7/8}{
							\draw (\x,0) -- (\x,1);
							\draw (0,1-\x) -- (1,1-\x);}
						
						\foreach \x in {9/16, 11/16, 13/16, 14/16, 15/16}{
							\draw (\x,0) -- (\x,4/8);
							\draw (4/8,1-\x) -- (1,1-\x);}
						
						\foreach \x in {7/16, 9/16, 11/16, 13/16, 14/16}{
							\draw (\x,1/8) -- (\x,5/8);
							\draw (3/8,1-\x) -- (7/8,1-\x);}
						
						
					\end{tikzpicture}
					\vspace{0.2em}
					
				\end{minipage}
				\caption{Considering $\mathbf{p}=(3,3)$, the initial mesh, on the left, contains a cell with suboptimal approximation power. The central figure shows the cells that must be refined to increase the approximation power of the marked cell and preserve the weakly admissible property in the new mesh. The figure on the right presents the mesh obtained after this refinement.}
				\label{Fig: M_subopt descripcion}
			\end{figure}
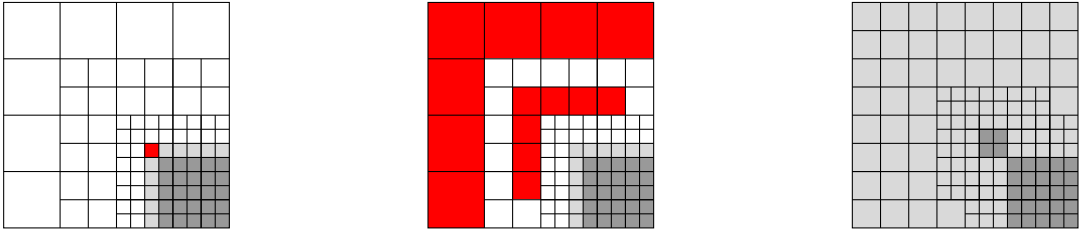
			
			We now describe the proposed method to iteratively obtain the active cells of the mesh $\QQQ$ that must be refined.

			\begin{method}[Adaptive refinement procedure]\label{met: procedimiento adaptativo}
				\leavevmode
				\begin{description}
					\item[Initialize:] Let $\QQQ$ be a WAHM constructed from a clustered hierarchy of subdomains $\mathbf{\Omega}_n = \{\Omega_\ell\}_{\ell=0}^n$, and let $\MMM$ be a subset of active cells. We consider the subsets $\MMMuno{} = \{\Muno{\ell}\}_{\ell=0}^{n-1}$ and $\MMMdos{} = \{\Mdos{\ell}\}_{\ell=0}^{n-1}$ obtained by applying \Cref{alg:update_marked_elements} to $\MMM$. We set $\Omega_{n+1}^*:=\emptyset$ and $\Omega_{0}^* :=\Omega_0$.
					
					\item[Iterative Step:] For $\ell = n, n-1, \ldots, 1$, we define
					\begin{equation}\label{Eq: M_ADM nivel l}
						\Mcuatro{\ell} := \{\parent(Q) \mid Q \in \QQ_{\ell+1}, Q \subset \omega_{\ell+1}^* \setminus \omega_{\ell}\},
					\end{equation}
					noting that $\Mcuatro{n} = \emptyset$. To collect all cells requiring refinement at level $\ell-1$, we define:
					\begin{equation}\label{Eq: W_ell^*}
						W_\ell^* := \children(\Muno{\ell-1}) \cup \Mdos{\ell} \cup \Mcuatro{\ell}.
					\end{equation}
					Then, the updated subdomain is defined as:
					\begin{equation} \label{Def: Omega_ell*}
						\Omega_\ell^* := \Omega_\ell \cup \bigcup_{Q \in W_\ell^*} (\mathcal{N}_Q \cap \Omega_0),
					\end{equation}
					which consequently defines $\omega_\ell^*$.
				\end{description}
			\end{method}
			
			This unified procedure seamlessly handles all refinement requirements. The set $W_\ell^*$ gathers three types of cells that must be included in $\omega_\ell^*$:
			\begin{enumerate}
				\item $\children(\Muno{\ell-1})$: Children of marked optimal cells, ensuring they achieve an approximation power of $\ell$. (Note that for $\ell=n$, $W_n^*$ reduces entirely to this set, as $\Mdos{n}=\emptyset$).
				\item $\Mdos{\ell}$: Suboptimal marked cells whose approximation power must be increased.
				\item $\Mcuatro{\ell}$: Parent cells needed strictly to preserve the weakly admissible property. Since $\omega_{\ell+1}^* \setminus \omega_\ell \subset \omega_\ell^*$ guarantees $\omega_{\ell+1}^* \subset \omega_\ell^*$, including $\Mcuatro{\ell}$ in $W_\ell^*$ is sufficient.
			\end{enumerate}
			
			To ensure that every cell $Q \in W_\ell^*$ is effectively included in $\omega_\ell^*$, \eqref{Def: Omega_ell*} expands $\Omega_\ell$ by adding the required cells of level $\ell-1$ from the neighborhoods $\mathcal{N}_Q$ (see \Cref{Fig: conjuntos W y Z}).
			
			\begin{figure}[H]
					\centering
				\begin{minipage}{0.3\textwidth}
					\centering
					\begin{tikzpicture}[scale=2.5]
						\fill[gray!50] (0,0) rectangle (1.0,1.0);
						
						\fill[red] (0.4,0.4) rectangle (0.6,0.6);
						\draw[very thick] (0,0) rectangle (1,1);
						
						\foreach \x in {0.2, 0.4, 0.6, 0.8} {
							\draw (\x,0) -- (\x,1);
							\draw (0,\x) -- (1,\x);
						}
						
						\draw (0.5,0.4) -- (0.5,0.6); 
						\draw (0.4,0.5) -- (0.6,0.5); 
					\end{tikzpicture}
					\vspace{0.2em}
					
				\end{minipage}
				\hspace{1em}
				\begin{minipage}{0.4\textwidth}
					\centering
					\begin{tikzpicture}[scale=2.5]
						
						\fill[red! 50] (0.3,0.3) rectangle (0.7,0.7);
						\fill[red!] (0.4,0.4) rectangle (0.6,0.6);

						\draw[very thick] (0,0) rectangle (1,1);
						
						\foreach \x in {0.1, 0.2,..., 0.9} {
							\draw[gray! 50] (\x,0) -- (\x,1);
							\draw[gray! 50] (0,\x) -- (1,\x);
						}
						
						\fill[red] (0.3,0.3) rectangle (0.7,0.7);
						
						\foreach \x in {0.2, 0.4, 0.6, 0.8} {
							\draw (\x,0) -- (\x,1);
							\draw (0,\x) -- (1,\x);
						}
						
						\draw (0.5,0.4) -- (0.5,0.6); 
						\draw (0.4,0.5) -- (0.6,0.5); 

					\end{tikzpicture}
					\vspace{0.2em}
					
				\end{minipage}
				\caption{Cutaway of the mesh at levels $n-1$ and $n$ for $\mathbf{p}=(3,3)$. Left: A marked cell $Q' \in \Muno{n-1}$ at level $n-1$ with its children $W_n^*$. The set of cells of level $n-1$ contained in $\mathcal{N}_{Q}\cap\Omega_0$ for each $Q\in W_n^*$ is also depicted. Right: After refinement, the cells of $W_n^*$ are successfully included in $\omega^*_n$.}\label{Fig: conjuntos W y Z}
			\end{figure}
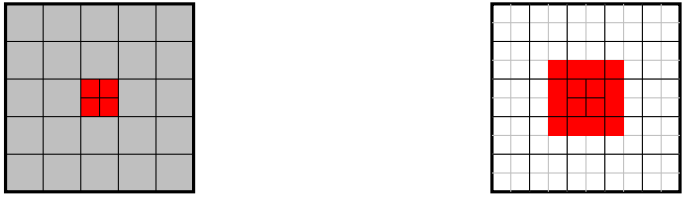
			
			\begin{remark}\label{Ob: algoritmo 2 refinamiento} 
				Using \Cref{met: procedimiento adaptativo} yields the following properties for all valid $\ell$:
				\begin{enumerate}[label=(\roman*)]
					\item \label{item1 Ob: algoritmo 2 refinamiento} $\Omega_\ell \subset \Omega_\ell^*$ and $\omega_\ell \subset \omega_\ell^*$.
					\item \label{item3 Ob: algoritmo 2 refinamiento} If $Q \in \QQ_\ell$ and $Q \subset \omega_\ell^* \setminus \omega_{\ell-1}$, then $Q \in W_{\ell-1}^*$. This follows directly from \eqref{Eq: M_ADM nivel l} and \eqref{Eq: W_ell^*}.
				\end{enumerate}
			\end{remark}
			
			We now guarantee that the sequence $\mathbf{\Omega}_{n+1}^*$ forms a valid hierarchical refinement.
			
			\begin{lemma}[Well-definedness of $\mathbf{\Omega}^*_{n+1}$]\label{Lemma: buena definicion de la malla}
				Let $\QQQ$ be a WAHM associated with a clustered hierarchy $\mathbf{\Omega}_n$. If $\mathbf{\Omega}_{n+1}^*$ is obtained via \Cref{met: procedimiento adaptativo}, then $\Omega^*_\ell \subset \Omega_{\ell-1}$ for all $\ell = n+1, \dots, 1$.
			\end{lemma}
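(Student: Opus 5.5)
We prove the inclusion by descending induction on $\ell$, following the order in which \Cref{met: procedimiento adaptativo} builds the sets $\Omega^*_\ell$. The case $\ell=n+1$ holds trivially, since $\Omega^*_{n+1}=\emptyset$. For $1\le\ell\le n$, definition \eqref{Def: Omega_ell*} gives $\Omega^*_\ell=\Omega_\ell\cup\bigcup_{Q\in W^*_\ell}(\mathcal{N}_Q\cap\Omega_0)$. Because $\Omega_\ell\subset\Omega_{\ell-1}$ holds for any hierarchy, it suffices to show that $\mathcal{N}_Q\cap\Omega_0\subset\Omega_{\ell-1}$ for every level-$\ell$ cell $Q\in W^*_\ell$. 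We check this separately for each of the three pieces of $W^*_\ell$ in \eqref{Eq: W_ell^*}.

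\emph{Children of optimal and suboptimal marked cells.} Let $Q\in\children(\Muno{\ell-1})$. By \Cref{Prop: celdas M1 y M2}\ref{item1 Prop: celdas M1 y M2}, $\parent(Q)\in\Muno{\ell-1}$ satisfies $\parent(Q)\subset\omega_{\ell-1}$. Similarly, if $Q\in\Mdos{\ell}$, the definition in \Cref{alg:update_marked_elements} gives $\parent(Q)\subset\omega_{\ell-1}$ directly. In both cases we combine \Cref{parent_extendido_Q_incluido_extendido_Parent_Q} with \Cref{Obs: extension infinita y extension}. Intersecting $\mathcal{N}_Q\subset\widehat{\parent(Q)}$ with $\Omega_0$ yields
\[
\mathcal{N}_Q\cap\Omega_0\subset\widehat{\parent(Q)}\cap\Omega_0=\widetilde{\parent(Q)}\subset\Omega_{\ell-1},
\]
where the last inclusion is the definition of $\parent(Q)\subset\omega_{\ell-1}$. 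Equivalently, one can apply \Cref{Teo: Omega_ell y omega_ell} at level $\ell-1$ to $\parent(Q)$ and use $\widehat{\parent(Q)}\subset\mathcal{N}_{\parent(Q)}$. Weak admissibility is not needed in these two cases.

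\emph{The admissibility cells $\Mcuatro{\ell}$.} This is the case that requires the induction hypothesis, and I expect it to be the main point. Let $P\in\Mcuatro{\ell}$, so $P=\parent(Q')$ for some $Q'\in\QQ_{\ell+1}$ with $Q'\subset\omega^*_{\ell+1}\setminus\omega_\ell$. Then $Q'\subset\omega^*_{\ell+1}\subset\Omega^*_{\ell+1}$, and the induction hypothesis at level $\ell+1$ gives $\Omega^*_{\ell+1}\subset\Omega_\ell$; the ordering $\ell=n,n-1,\dots$ in the method makes this legitimate. Since $\Omega_\ell$ is a union of cells of level $\ell-1$, it is also a union of cells of level $\ell$. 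Hence the level-$\ell$ cell $P$, which contains $Q'$ and therefore meets the interior of $\Omega_\ell$, satisfies $P\subset\Omega_\ell$.

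Now $\QQQ$ is a WAHM built from a clustered hierarchy, so the second characterization (\Cref{Equivalencia_Mallas_DA}) applies to $P\subset\Omega_\ell$ and gives $\mathcal{N}_P\cap\Omega_0\subset\Omega_{\ell-1}$. This is the only place where both hypotheses (weak admissibility and clustering) enter. The only care needed is to process levels from top to bottom, so that $\Omega^*_{\ell+1}$ is already known to lie in $\Omega_\ell$. Combining the three cases yields $\Omega^*_\ell\subset\Omega_{\ell-1}$, which closes the induction.
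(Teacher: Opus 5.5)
Your proof is correct and follows essentially the same backward induction as the paper: the same three-way split of $W_\ell^*$, and the same use of the induction hypothesis $\Omega^*_{\ell+1}\subset\Omega_\ell$ together with \Cref{Equivalencia_Mallas_DA} for $\Mcuatro{\ell}$. The only minor deviations are that you start at the trivial case $\ell=n+1$, and that you handle $\Mdos{\ell}$ through its defining condition $\parent(Q)\subset\omega_{\ell-1}$ (as for $\children(\Muno{\ell-1})$) rather than through the second characterization; this is equally valid.
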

			
			\begin{proof}
				We proceed by backward induction. For the base case $\ell=n$, if $Q \in W_n^*$, then $\parent(Q) \in \Muno{n-1}$ and, by~\Cref{Prop: celdas M1 y M2}, $\parent(Q)\subset\omega_{n-1}$. Taking into account \Cref{parent_extendido_Q_incluido_extendido_Parent_Q}, $\mathcal{N}_Q \cap \Omega_0 \subset \widetilde{\parent(Q)} \subset \Omega_{n-1}$, implying $\Omega_n^* \subset \Omega_{n-1}$.
				Assume $\Omega^*_{\ell+1} \subset \Omega_{\ell}$ holds for some $\ell$. To prove $\Omega^*_\ell \subset \Omega_{\ell-1}$, it suffices by \eqref{Def: Omega_ell*} to show that $\mathcal{N}_Q \cap \Omega_0 \subset \Omega_{\ell-1}$ for all $Q \in W_\ell^*$. 
				Since $\QQQ$ is a WAHM, \Cref{Equivalencia_Mallas_DA} implies $\mathcal{N}_Q \cap \Omega_0 \subset \Omega_{\ell-1}$ for any $Q \subset \Omega_\ell$. Both $\Mdos{\ell}$ and $\Mcuatro{\ell}$ consist of cells included in $\Omega_\ell$ (the latter due to the inductive hypothesis $\Omega^*_{\ell+1} \subset \Omega_{\ell}$). Thus, the condition holds for $Q \in \Mdos{\ell} \cup \Mcuatro{\ell}$. Finally, for $Q \in \children(\Muno{\ell-1})$, the same reasoning used in the base case yields $\mathcal{N}_Q \cap \Omega_0 \subset \Omega_{\ell-1}$. 
			\end{proof}
			
			The following theorem collects the main properties of the refined mesh $\QQQ^*$, proving it maintains the clustered and weakly admissible structure while appropriately increasing the approximation power of the marked cells.
			
			\begin{theorem}[Main result]\label{Teo: princial refinamiento}
				Let $\QQQ$ be a WAHM associated with a clustered hierarchy $\mathbf{\Omega}_n$. Let $\QQQ^*$ be generated by $\mathbf{\Omega}_{n+1}^*$ via \Cref{met: procedimiento adaptativo}. Then,
				\begin{enumerate}[label=(\roman*)]
					\item \label{item1 Teo: princial refinamiento} $\mathbf{\Omega}_{n+1}^*$ is a clustered hierarchy of subdomains.
					\item \label{item2 Teo: princial refinamiento} For $\ell < n$, if $Q \in \Muno{\ell}$, then $Q \in \AAA_{\ell+1}(\QQQ^*)$.
					\item \label{item3 Teo: princial refinamiento} For $\ell < n$, if $Q \in \Mdos{\ell}$, then $Q \in \AAA_\ell(\QQQ^*)$.
					\item \label{item4 Teo: principal refinamiento} If $Q$ is a cell in $\MMM$ and $Q \in \AAA_k(\QQQ)$, then $Q \in \AAA_{k+1}(\QQQ^*)$.
					\item \label{item5 Teo: princial refinamiento} $\QQQ^*$ is a WAHM.
				\end{enumerate}
			\end{theorem}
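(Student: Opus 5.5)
I will prove the five items in the order (i), then the monotonicity facts, then (ii)–(iii), then (iv), and (v) last, since (v) is the one that couples consecutive levels.

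For \ref{item1 Teo: princial refinamiento}, I start from the clustered representation $\Omega_\ell=\bigcup_{i\in J}(C_i\cap\Omega_0)$. By \eqref{Def: Omega_ell*}, $\Omega^*_\ell$ is this set together with the sets $\mathcal{N}_Q\cap\Omega_0$, $Q\in W^*_\ell$. Each $Q\in W^*_\ell$ is a cell of level $\ell$ contained in $\Omega_0$, so \Cref{Prop: C_Q es Forma_p} gives $\mathcal{N}_Q\in\mathbf{F}^\infty_{\mathbf{p},\ell-1}$. Hence $\Omega^*_\ell$ is again a union of $\mathbf{p}$-forms of level $\ell-1$ localized to $\Omega_0$. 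Nestedness, and the fact that $\Omega^*_\ell$ is a union of cells of level $\ell-1$, follow from \Cref{Lemma: buena definicion de la malla}: $\Omega^*_\ell\subset\Omega_{\ell-1}\subset\Omega^*_{\ell-1}$, using \Cref{Ob: algoritmo 2 refinamiento}\ref{item1 Ob: algoritmo 2 refinamiento}. So $\mathbf{\Omega}^*_{n+1}$ is a clustered hierarchy.

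For \ref{item2 Teo: princial refinamiento} and \ref{item3 Teo: princial refinamiento}, the key tool is \Cref{Teo: Omega_ell y omega_ell} applied to $\QQQ^*$. A cell $Q'$ of level $\ell$ lies in $\omega^*_\ell$ if and only if $\mathcal{N}_{Q'}\cap\Omega_0\subset\Omega^*_\ell$. This holds for every $Q'\in W^*_\ell$ by construction.
\begin{itemize}
\item If $Q\in\Muno{\ell}$, then $\children(Q)\subset W^*_{\ell+1}$, so $\children(Q)\subset\omega^*_{\ell+1}$, i.e.\ $Q\subset\omega^*_{\ell+1}$.
\item If $Q\in\Mdos{\ell}$, then $Q\in W^*_\ell$ gives $Q\subset\omega^*_\ell$.
\end{itemize}
To show the power is exactly $\ell+1$ (resp.\ $\ell$), I need $Q\not\subset\omega^*_{\ell+2}$ (resp.\ $Q\not\subset\omega^*_{\ell+1}$). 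For this I will use \Cref{Lemma: buena definicion de la malla}: $\omega^*_{j}\subset\Omega^*_j\subset\Omega_{j-1}$. For $Q\in\Muno{\ell}$ active, $Q\not\subset\Omega_{\ell+1}$, hence $Q\not\subset\omega^*_{\ell+2}$. For $Q\in\Mdos{\ell}$, I would argue that $Q\not\subset\Omega_{\ell+1}$: active cells satisfy this trivially, and the deactivated ones arising from $\Mtres{}$ are parents of active cells that were marked. I also need "largest index" in \Cref{Def: poder de proximacion} to be attained; if the last step is delicate, the monotonicity $\omega^*_{j}\subset\omega^*_{j-1}$ from (v) settles it.

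For \ref{item4 Teo: principal refinamiento}, I will trace a marked cell through \Cref{alg:update_marked_elements}. If $Q\in\MM_\ell$ lands in $\Muno{\ell}$ or $\Mdos{\ell}$, its power is $\ell$ or $\ell-1$ in $\QQQ$, and (ii)/(iii) raise it by one. If instead $Q\in\Mtres{\ell}$, its parent is passed down, possibly repeatedly, until some ancestor $P$ of level $j$ lands in $\Muno{j}$ or $\Mdos{j}$. I then check two things. First, the power of $Q$ in $\QQQ$ equals that of $P$, namely $Q\subset\omega_k$ iff $P\subset\omega_k$ for the relevant $k$, because the intermediate parents are not in $\omega$ of their level; here I use the first WAHM characterization \Cref{Teo: 1ra equivalencia MDA}. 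Second, after refinement $P\subset\omega^*_{k+1}$ forces $Q\subset\omega^*_{k+1}$, while $Q\not\subset\omega^*_{k+2}$ follows from the argument of the previous paragraph.

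For \ref{item5 Teo: princial refinamiento}, I must show $\omega^*_{\ell+1}\subset\omega^*_\ell$. Take a cell $Q\subset\omega^*_{\ell+1}$ of level $\ell+1$; there are two cases.
\begin{itemize}
\item If $Q\subset\omega_\ell$, then $Q\subset\omega^*_\ell$ by \Cref{Ob: algoritmo 2 refinamiento}\ref{item1 Ob: algoritmo 2 refinamiento}.
\item Otherwise $\parent(Q)\in\Mcuatro{\ell}\subset W^*_\ell$. Hence $\mathcal{N}_{\parent(Q)}\cap\Omega_0\subset\Omega^*_\ell$, so $\parent(Q)\subset\omega^*_\ell$ by \Cref{Teo: Omega_ell y omega_ell}.
\end{itemize}
The subtle point is circularity: $\Mcuatro{\ell}$ is defined from $\omega^*_{\ell+1}$. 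The backward iteration $\ell=n,\dots,1$ of \Cref{met: procedimiento adaptativo} resolves this, since $\Omega^*_{\ell+1}$ is fixed before $\Omega^*_\ell$ is built. The base case $\ell=0$ is trivial because $\omega^*_0=\Omega_0$. I expect the main obstacle to be the exactness claims in (ii)–(iv), especially tracking ancestors through $\Mtres{}$ in (iv). Item (v) should be routine.
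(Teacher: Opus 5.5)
\textbf{The gap: the upper bound in (iii) and (iv).} Your arguments for (i), for the inclusions $Q\subset\omega^*_{\ell+1}$ and $Q\subset\omega^*_\ell$ in (ii)--(iii), for the ancestor tracing in (iv), and for (v) are the paper's. In (v) you even correctly put $\parent(Q)$, not $Q$, into $W^*_\ell$. The paper's proof stops at those inclusions: it reads $Q\in\AAA_\ell(\QQQ^*)$ directly off $Q\subset\omega^*_\ell$ and never shows $Q\not\subset\omega^*_{\ell+1}$. You try to supply this upper bound too, and that is where your proposal has a genuine gap.

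For $Q\in\Muno{\ell}$ your argument is correct. Such $Q$ is an originally marked active cell, since parents pushed down from $\Mtres{\ell+1}$ are not in $\omega_\ell$. Hence $Q\not\subset\Omega_{\ell+1}$, whereas $\omega^*_{\ell+2}\subset\Omega^*_{\ell+2}\subset\Omega_{\ell+1}$ by \Cref{Lemma: buena definicion de la malla}. For $Q\in\Mdos{\ell}$ the argument fails on three counts:
\begin{enumerate}[label=(\alph*)]
\item The deactivated members of $\Mdos{\ell}$ (parents coming from $\Mtres{\ell+1}$) satisfy $Q\subset\Omega_{\ell+1}$ by the very definition of a deactivated cell. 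So your claim $Q\not\subset\Omega_{\ell+1}$ is false for them.
\item Even for active $Q$, knowing $Q\not\subset\Omega_{\ell+1}$ does not help. The lemma only gives $\omega^*_{\ell+1}\subset\Omega^*_{\ell+1}\subset\Omega_\ell$. Moreover, $\Omega^*_{\ell+1}$ can strictly contain $\Omega_{\ell+1}$ and can contain $Q$ itself, e.g.\ when $Q\subset\mathcal{N}_R$ for a child $R$ of a neighbouring cell in $\Muno{\ell}$.
\item The monotonicity $\omega^*_j\subset\omega^*_{j-1}$ from (v) only makes $\{k : Q\subset\omega^*_k\}$ an initial segment of levels. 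It gives no bound from above on the power.
\end{enumerate}

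\textbf{Consequences for (iv) and how to proceed.} The same gap affects (iv). Whenever $k<\ell$, either $Q$ itself or its ancestor $\parent_{k+1}(Q)$ lies in $\Mdos{k+1}$. Your step ``$Q\not\subset\omega^*_{k+2}$ follows from the argument of the previous paragraph'' is then exactly the unproved upper bound. Note also that an upper bound for the ancestor would not transfer to the smaller cell $Q$; only the lower bound does.

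Closing this needs a new ingredient that controls how far $\Omega^*_{\ell+1}$ can grow beyond $\Omega_{\ell+1}$. That growth comes through $\children(\Muno{\ell})$, $\Mdos{\ell+1}$ and the recursively generated $\Mcuatro{\ell+1}$. For instance, an inclusion $\omega^*_{j+1}\subset\omega_j$ for all $j$ would give all the upper bounds at once. Neither your sketch nor the paper's proof establishes anything of this kind. As it stands, your argument, like the paper's, proves in (iii) and (iv) only that the approximation power becomes \emph{at least} $\ell$, respectively at least $k+1$.
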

			
			\begin{proof}
				Before proceeding, we observe that if $Q \in W_\ell^*$, then $Q \subset \omega_\ell^*$. This follows from \Cref{Teo: Omega_ell y omega_ell}, as $\mathcal{N}_Q \cap \Omega_0 \subset \Omega_\ell^*$ by construction.
				
				\ref{item1 Teo: princial refinamiento} Applying \Cref{Teo: jerarquia de subdominio agrupada} to $\mathbf{\Omega}_n$ allows us to rewrite \eqref{Def: Omega_ell*} as $\Omega_\ell^* = \bigcup_{Q \in A} (\mathcal{N}_Q \cap \Omega_0)$ with $A = \{Q \in \QQ_\ell \mid Q \subset \omega_\ell \lor Q \in W_\ell^*\}$. Since $\mathcal{N}_Q \in \mathbf{F}^\infty_{\mathbf{p},\ell-1}$ (\Cref{Prop: C_Q es Forma_p}), $\mathbf{\Omega}_{n+1}^*$ is clustered.
				
				\ref{item2 Teo: princial refinamiento} If $Q \in \Muno{\ell}$, all its children belong to $W_{\ell+1}^*$. By our initial observation, they are included in $\omega_{\ell+1}^*$, meaning $Q \subset \omega_{\ell+1}^*$. Thus, $Q \in \AAA_{\ell+1}(\QQQ^*)$.
				
				\ref{item3 Teo: princial refinamiento} If $Q \in \Mdos{\ell}$, then $Q \in W_\ell^*$. Consequently, $Q \subset \omega_\ell^*$, implying $Q \in \AAA_\ell(\QQQ^*)$.
				
				\ref{item4 Teo: principal refinamiento} Let $Q \in \MM_\ell \cap \AAA_k(\QQQ)$ with $k \leq \ell$. 
				If $k=\ell$, then $Q \in \Muno{\ell}$ and it upgrades to $\AAA_{\ell+1}(\QQQ^*)$ by (ii). 
				If $k=\ell-1$, then $Q \in \Mdos{\ell}$ and it upgrades to $\AAA_\ell(\QQQ^*)$ by (iii). 
				If $k < \ell-1$, the weak admissibility of $\QQQ$ ensures that $\parent_{k+1}(Q) \in \Mdos{k+1}$ (\Cref{Prop: celdas M1 y M2}). By (iii), $\parent_{k+1}(Q) \in \AAA_{k+1}(\QQQ^*)$. Since $Q \subset \parent_{k+1}(Q)$, its approximation power increases to $k+1$.
				
				\ref{item5 Teo: princial refinamiento} To verify $\QQQ^*$ is a WAHM, consider $Q \subset \omega_\ell^* \setminus \omega_{\ell-1}$. By \Cref{Ob: algoritmo 2 refinamiento}\ref{item3 Ob: algoritmo 2 refinamiento}, $Q \in W_{\ell-1}^*$. This implies $Q \subset \omega_{\ell-1}^*$. Hence, $\omega_\ell^* \setminus \omega_{\ell-1} \subset \omega_{\ell-1}^*$, which combined with $\omega_{\ell-1} \subset \omega_{\ell-1}^*$ yields $\omega_\ell^* \subset \omega_{\ell-1}^*$.
			\end{proof}

			\subsection{Algorithm implementation}
			
			This section details the implementation of the adaptive refinement procedure described in \Cref{met: procedimiento adaptativo}. The core of this process is the marking strategy, which selects the cells requiring refinement. Once identified, these cells are used to generate a new hierarchy of subdomains, and consequently, the updated mesh and hierarchical space according to \Cref{def: construccion de refinamiento}.
			
			Given a weakly admissible hierarchical mesh $\QQQ$ and a subset of active cells $\MMM$, the updated set of marked cells $\hat{\MMM} = \{\hat{\MM}_\ell\}_{\ell=0}^{n-1}$ is computed via the primary marking function:
			\[
			\hat{\MMM} = \textbf{weakly\_admissible\_marking}(\QQQ, \MMM).
			\]
			Before detailing this algorithm, we introduce two essential routines, implemented using standard functions from the GeoPDEs package \cite{GEOPDES-NEW}:
			
			\begin{enumerate}[label=(\roman*)]\label{R: Rutinas}
				\item \label{R: rutina CQ de una celda} \textbf{Computation of the neighborhood $\mathcal{N}_Q$:} For a cell $Q \in \QQ_\ell$, the routine 
				\[
				\MM^Q_{\ell-1} = \textbf{mark\_for\_increasing\_order\_on}(Q)
				\]
				identifies the cells of level $\ell-1$ comprising $\mathcal{N}_Q \cap \Omega_0$. These are precisely the cells that must be refined to ensure $Q \subset \omega^*_\ell$. In GeoPDEs, this is achieved by composing internal functions:
				\[
				\MM^Q_{\ell-1} = \textbf{get\_parent\_of\_cell}(\textbf{get\_cells}(\textbf{get\_basis\_functions}(Q))).
				\]

				This composition first retrieves the B-splines at level $\ell$ that are non-zero on $Q$, then finds all cells of level $\ell$ in their supports, and finally extracts their parents of level $\ell-1$.
				
				\item \label{R: rutina omega de un conjunto} \textbf{Computation of cells with full approximation power:} Given an arbitrary set $B$ of cells of level $\ell$, the routine
				\[
				\omega_\ell(B) = \textbf{compute\_cells\_with\_full\_approximation}(B)
				\]
				extracts the subset of cells in $B$ whose support extension is fully contained within $B$. For instance, $\omega_\ell(\Omega_\ell)$ recovers all cells of level $\ell$ in $\omega_\ell$, and $\omega_\ell(\mathcal{N}_Q)$ yields the core set $\core(\mathcal{N}_Q)$ defined in \Cref{Prop: representacion de supp B-splines}. We also remark that the support extension of a cell $Q$ is computed via $\EE_\ell^Q = \textbf{get\_cells}(\textbf{get\_basis\_functions}(Q))$.
			\end{enumerate}
			
			With these tools, we summarize the implementation of \Cref{met: procedimiento adaptativo} in \Cref{alg:compute_marked_elements}.
			
			\begin{algorithm}[htb]
				\small 
				\caption{\textbf{weakly\_admissible\_marking}}\label{alg:compute_marked_elements}
				\begin{algorithmic}[1]
					\Statex \textbf{Input:} Mesh $\QQQ$, initial marked cells $\MMM$
					\State $\{\MMMuno{}, \MMMdos{}\} \gets \textbf{update\_marked\_elements}(\QQQ, \MMM)$ \Comment{Via \Cref{alg:update_marked_elements}}
					\State $\Mcuatro{n} \gets \emptyset$
					\For{$\ell = n, \dots, 1$}
					\State $\{\hat{\MM}_{\ell-1}, \Mcuatro{\ell-1}\} \gets \textbf{compute\_cells\_to\_refine\_in\_level}(\Muno{\ell-1}, \Mdos{\ell}, \Mcuatro{\ell})$
					\EndFor
					\Statex \textbf{Output:} Updated marked set $\hat{\MMM} = \{\hat{\MM}_\ell\}_{\ell=0}^{n-1}$
				\end{algorithmic}
			\end{algorithm}
			
			\Cref{alg:compute_marked_elements} begins by sorting the initial set $\MMM$ into the structurally significant subsets $\MMMuno{}$ and $\MMMdos{}$. The main loop (lines 3--5) iterates downwards through the levels. At each step $\ell$, the routine \textbf{compute\_cells\_to\_refine\_in\_level} performs two tasks:
			
			First, it computes $\hat{\MM}_{\ell-1}$, the set of active cells level of $\ell-1$ that must be refined to construct the updated subdomain $\Omega_\ell^*$. Following \Cref{met: procedimiento adaptativo}, it defines $W_\ell^*$ as in~\eqref{Eq: W_ell^*} and applies routine \ref{R: rutina CQ de una celda} to construct:
			\[
			\hat{\MM}_{\ell-1} := \left( \bigcup_{Q \in W_\ell^*} \MM^Q_{\ell-1} \right) \cap \QQQ.
			\]
			
			Second, it computes the set $\Mcuatro{\ell-1}$ required for the next iteration to preserve weak admissibility:
			\[
			\Mcuatro{\ell-1} = \{\parent(Q) \mid Q \in \QQ_\ell, \, Q \subset \omega_\ell^* \setminus \omega_{\ell-1}\}.
			\]
			Here, $\omega_\ell^*$ is evaluated using routine \ref{R: rutina omega de un conjunto} as $\textbf{compute\_cells\_with\_full\_approximation}(\Omega_\ell^*)$. While conceptually straightforward, computing $\Mcuatro{\ell-1}$ this way requires constructing the full set $\Omega_\ell^*$ at each step. A more efficient strategy to obtain $\Mcuatro{\ell-1}$ will be detailed later in the context of \Cref{alg: MARK_RECURSIVE}.
			
			Finally, upon completion of \Cref{alg:compute_marked_elements}, the output $\hat{\MMM}$ is passed to the mesh refinement module. Applying \Cref{def: construccion de refinamiento}, the updated hierarchical mesh $\QQQ^*$ and its associated basis $\HH^*$ are generated via:
			\[
			\QQQ^* = \textbf{refine\_hierarchical\_mesh}(\QQQ, \hat{\MMM}).
			\]

			\subsection{Refinement complexity}\label{Sec: Complejidad}
			
			We conclude this section by studying the complexity of an iterative procedure based on the proposed algorithm. Specifically, we bound the growth of the number of mesh cells relative to the number of marked elements, that is,
			\begin{equation*} \label{eq: bound_goal}
				\#\QQQ_J - \#\QQQ_0 \leq C \sum_{j=0}^{J-1} \#\MMM_j,
			\end{equation*}
			where $\#\MMM_j$ is the number of marked cells in the current mesh $\QQQ_j$ at iteration $j$, and the constant $C$ depends only on the dimension $d$ and the polynomial degree $\mathbf{p} = (p, \dots, p)$. The main results of this section build upon the ideas developed for standard adaptive finite element methods (see e.g.~\cite[Theorem 4.3]{NSV2009} and \cite{NV2012}) and the lines presented in \cite[Lemma 12, Theorem 13]{BGMP2016} for hierarchical splines on strictly admissible meshes of class $m$.
			
			To facilitate the analysis, we introduce \Cref{alg: MARK_WEAKLY}, a recursive implementation of the marking strategy previously detailed in \Cref{alg:compute_marked_elements}. 
			
			\begin{algorithm}[H]
				\small
				\caption{\textbf{weakly\_admissible\_marking\_recursive}}\label{alg: MARK_WEAKLY}
				\begin{algorithmic}[1]
					\Statex \textbf{Input:} Mesh $\QQQ$, initial marked cells $\MMM$
					\State $\{\MMMuno{}, \MMMdos{}\} \gets \textbf{update\_marked\_elements}(\QQQ, \MMM)$ \Comment{Via \Cref{alg:update_marked_elements}}
					\State $\hat{\MMM} = \{\hat{\MM}_\ell\}_{\ell=0}^{n-1} \gets \emptyset$
					\For{$\ell = n, \dots, 1$}
					\For{$Q \in \children(\Muno{\ell-1}) \cup \Mdos{\ell}$}
					\State $\hat{\MMM} \gets \textbf{mark\_recursive}(\QQQ, Q, \hat{\MMM})$
					\EndFor
					\EndFor
					\Statex \textbf{Output:} Updated marked set $\hat{\MMM}$
				\end{algorithmic}
			\end{algorithm}
			
			\Cref{alg: MARK_WEAKLY} applies the core routine, \textbf{mark\_recursive} (\Cref{alg: MARK_RECURSIVE}), to each cell in $\children(\Muno{\ell-1}) \cup \Mdos{\ell}$ across all levels. This recursive procedure systematically identifies all cells that must be added to $\hat{\MMM}$ to maintain weak admissibility.
			
			\begin{algorithm}[H]
				\small
				\caption{\textbf{mark\_recursive}}\label{alg: MARK_RECURSIVE}
				\begin{algorithmic}[1]
					\Statex \textbf{Input:} Mesh $\QQQ$, cell $Q \in \QQ_\ell$, current set $\hat{\MMM}$
					\For{$Q_i \in \omega_\ell(\mathcal{N}_Q \cap \Omega_0)$}
					\If{$Q_i \not\subset \omega_{\ell-1}$}
					\State $\hat{\MMM} \gets \textbf{mark\_recursive}(\QQQ, \parent(Q_i), \hat{\MMM})$ \Comment{Replaces $\Mcuatro{\ell-1}$ computation}
					\EndIf
					\EndFor
					\State Add all $Q' \in \QQQ$ such that $Q' \subset \mathcal{N}_Q$ to $\hat{\MM}_{\ell-1}$. \Comment{Replaces $\hat{\MM}_{\ell-1}$ computation}
					\Statex \textbf{Output:} Updated set $\hat{\MMM}$
				\end{algorithmic}
			\end{algorithm}
			
			\Cref{alg: MARK_RECURSIVE} utilizes the set $\omega_\ell(\mathcal{N}_Q \cap \Omega_0)$ defined via Routine \ref{R: rutina omega de un conjunto}. For each $Q_i$ in this set, if $Q_i \not\subset \omega_{\ell-1}$, it implies $Q_i \in \omega_\ell^* \setminus \omega_{\ell-1}$. To ensure this cell is properly supported by $\omega_{\ell-1}^*$, the recursion acts on $\parent(Q_i)$, elegantly replacing the explicit computation of $\Mcuatro{\ell-1}$ from \Cref{alg:compute_marked_elements}. Finally, the cells of level $\ell-1$ contained in $\mathcal{N}_Q$ are stored in $\hat{\MM}_{\ell-1}$.
			
			Following~\cite{BGV2018}, we require auxiliary results regarding the new cells generated by \Cref{alg: MARK_RECURSIVE}. Let $\dist(Q,Q')$ denote the Euclidean distance between the midpoints of cells $Q$ and $Q'$, and $\lev(Q)$ denote the hierarchical level of $Q$.
			
			\begin{remark}\label{ob: niveles refine_recursive}
				Let $Q \in \QQ_\ell$ and consider the set $\hat{\MMM}_Q = \textbf{mark\_recursive}(\QQQ, Q, \emptyset)$. If $\QQQ^* = \textbf{refine\_hierarchical\_mesh}(\QQQ, \hat{\MMM}_Q)$, then any new active cell $Q' \in \QQQ^* \setminus \QQQ$ satisfies $\lev(Q') \leq \lev(Q)$. Indeed, the recursive process only visits ancestors of level $k \leq \ell$. At each step of the recursion, the cells marked for refinement are of level $k-1$. Consequently, the children of these marked cells (the new cells $Q'$) will have level $k \leq \ell$.
			\end{remark}

			\begin{remark} \label{Ob: distancia entre Q y Q' de CQ}
				For $Q \in \QQ^\infty_\ell$ and $Q' \in \QQ^\infty_{\ell-1}$, if $Q' \subset \mathcal{N}_Q$, then basic geometric arguments yield:
				\begin{equation*} \label{eq: 1 distancia entre celdas y constantes}
					\dist(Q,Q') \leq \sqrt{d} \, (2p+1) 2^{-\lev(Q)-1}.
				\end{equation*}
			\end{remark}
			
			The following key lemma adapts the approach from \cite[Lemma 12]{BGV2018} to our refinement algorithm. It bounds the spatial distance between a newly created cell and the originally marked cell that triggered its creation.
			
			\begin{lemma} \label{lema: Complejidad}
				Let $\QQQ$ be a WAHM, $\MMM$ the set of marked cells, and $\{\MMMuno{}, \MMMdos{}\}$ the sets obtained via \Cref{alg:update_marked_elements}. Consider a newly generated cell $Q' \in \QQQ^* \setminus \QQQ$ which satisfies $Q' \in \children(Q_0)$ for some $Q_0$. 
				\begin{enumerate}[label=(\roman*)]
					\item \label{item 1:lema-complejidad} Assume that $Q_0 \in \hat{\MMM}_{Q_{ch}} = \textbf{mark\_recursive}(\QQQ, Q_{ch}, \emptyset)$ where $Q_{ch} \in \children(Q)$ and $Q \in \MMMuno{}$.
					\item \label{item 2:lema-complejidad} Assume that $Q_0 \in \hat{\MMM}_Q = \textbf{mark\_recursive}(\QQQ, Q, \emptyset)$ where $Q \in \MMMdos{}$.
				\end{enumerate}
				In both cases, it holds that
				\begin{equation*} \label{eq: lema complejidad}
					\dist(Q', Q) \leq 2^{-\lev(Q')} \tilde{C}, \quad \text{where} \quad \tilde{C} := \frac{3}{2} \sqrt{d}(2p+1).
				\end{equation*}
			\end{lemma}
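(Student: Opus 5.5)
The plan is to trace the recursion chain in \Cref{alg: MARK_RECURSIVE} that produces $Q_0$. Then sum the distances along that chain with the geometric bound of \Cref{Ob: distancia entre Q y Q' de CQ}, which yields a geometric series.

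\textbf{Case (ii).} Here the recursion starts from $Q^{(\ell)}:=Q\in\Mdos{\ell}$. Each recursive call is made on $Q^{(k-1)}=\parent(Q_i)$, where $Q_i\in\omega_k(\mathcal{N}_{Q^{(k)}}\cap\Omega_0)$. This gives a chain
\[
Q=Q^{(\ell)},\; Q^{(\ell-1)},\; \dots,\; Q^{(j)}, \qquad \lev(Q^{(k)})=k .
\]
The cell $Q_0$ is marked in the last call of the chain, so $Q_0\subset\mathcal{N}_{Q^{(j)}}$ with $\lev(Q_0)=j-1$, and $\lev(Q')=j$. I bound the distance in three pieces.

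\begin{itemize}
\item \emph{One step of the recursion.} We have $Q_i\subset\mathcal{N}_{Q^{(k)}}$ with $\lev(Q_i)=k$. The cells of $\omega_k(\mathcal{N}_{Q^{(k)}})$ are the core cells, i.e.\ the central $2^d$ cells of level $k$ (\Cref{Prop: representacion de supp B-splines}). Their parent lies within $\sqrt d\,(2p+1)2^{-k-1}$ of $Q^{(k)}$. The simplest route is to use \Cref{Ob: distancia entre Q y Q' de CQ} directly, since $\parent(Q_i)\subset\mathcal{N}_{Q^{(k)}}$ is a cell of level $k-1$. Hence
\[
\dist(Q^{(k)},Q^{(k-1)})\le \sqrt d\,(2p+1)2^{-k-1}.
\]
\item \emph{Final marking.} Again by \Cref{Ob: distancia entre Q y Q' de CQ}, $\dist(Q^{(j)},Q_0)\le\sqrt d(2p+1)2^{-j-1}$.
\item \emph{Child to parent.} The midpoint distance satisfies $\dist(Q_0,Q')=\sqrt d\,2^{-j-1}\le\sqrt d(2p+1)2^{-j-1}$.
\end{itemize}

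Summing with the triangle inequality,
\[
\dist(Q',Q)\le \sqrt d(2p+1)\Big(\sum_{k=j}^{\ell}2^{-k-1}+2^{-j-1}+2^{-j-1}\Big).
\]
The geometric sum is at most $2^{-j}$, so this crude bound gives $2\cdot 2^{-j}$ rather than $\tfrac32\cdot 2^{-j}$. I therefore have to tighten one of the pieces, and this bookkeeping is the main obstacle.

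\textbf{Tightening.} The child-to-parent term and the final-marking term should be merged. The cell $Q'$ is a child of a level $j-1$ cell inside $\mathcal{N}_{Q^{(j)}}$, so $Q'$ itself is a level-$j$ cell inside $\mathcal{N}_{Q^{(j)}}$. The distance from the midpoint of $Q^{(j)}$ to any level-$j$ cell of $\mathcal{N}_{Q^{(j)}}$ is at most $\sqrt d(p+\tfrac12)2^{-j}=\sqrt d(2p+1)2^{-j-1}$, because $\mathcal{N}_{Q^{(j)}}$ spans at most $p+1$ level-$j$ cells on each side of $Q^{(j)}$'s center. This gives $\dist(Q^{(j)},Q')\le\sqrt d(2p+1)2^{-j-1}$. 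The chain sum is
\[
\sum_{k=j+1}^{\ell}\sqrt d(2p+1)2^{-k-1}\le \sqrt d(2p+1)2^{-j-1}.
\]
Adding the two pieces gives
\[
\dist(Q',Q)\le \sqrt d(2p+1)2^{-j}\le \tilde C\,2^{-\lev(Q')}.
\]
I expect to verify the side-count claim carefully in coordinates for $d=2$ and then extend it componentwise.

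\textbf{Case (i).} The chain now starts at $Q_{ch}\in\children(Q)$, with $\lev(Q_{ch})=\ell+1$. The same estimate gives $\dist(Q',Q_{ch})\le\sqrt d(2p+1)2^{-j}$. To this I add $\dist(Q_{ch},Q)=\sqrt d\,2^{-\ell-2}$. Since $j\le\ell+1$ by \Cref{ob: niveles refine_recursive}, this extra term is at most $\tfrac12\sqrt d(2p+1)2^{-j}$, and the total stays below $\tilde C\,2^{-j}$. This slack is exactly what the factor $\tfrac32$ in $\tilde C$ accommodates.
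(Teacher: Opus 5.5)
Your strategy is the paper's: follow the chain of recursive calls, bound each link with the remark on level-$(\ell-1)$ cells contained in $\mathcal{N}_Q$, and sum a geometric series. However, the bookkeeping is off, and the bound you finally rely on is false.

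First, the ``crude'' estimate double-counts. The chain $Q^{(\ell)},\dots,Q^{(j)}$ has links only for $k=j+1,\dots,\ell$. So the $k=j$ term of $\sum_{k=j}^{\ell}2^{-k-1}$ is already the final-marking step $\dist(Q^{(j)},Q_0)$, and you add it a second time. Counted correctly:
\begin{itemize}
\item the links contribute at most $\sqrt d(2p+1)2^{-j-1}$;
\item the final marking contributes at most $\sqrt d(2p+1)2^{-j-1}$;
\item the child-to-parent step contributes $\sqrt d\,2^{-j-1}\le\sqrt d(2p+1)2^{-j-1}$.
\end{itemize}
Hence $\dist(Q',Q)\le 3\sqrt d(2p+1)2^{-j-1}=\tilde C\,2^{-\lev(Q')}$ at once. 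This is exactly the paper's argument, and no tightening is needed.

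Second, the tightening is wrong. In each coordinate direction, $\mathcal{N}_{Q^{(j)}}$ consists of $2p+2$ cells of level $j$: $p$ on one side of $Q^{(j)}$ and $p+1$ on the other. So the farthest level-$j$ cell has its midpoint $(p+1)2^{-j}$ away, not $(p+\tfrac12)2^{-j}$. Even your own count of $p+1$ cells on one side gives $p+1$ cell-widths between midpoints.

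For example, take $d=1$, $p=1$ and $Q^{(j)}=[2\cdot2^{-j},3\cdot2^{-j}]$, so that $\mathcal{N}_{Q^{(j)}}=[0,4\cdot2^{-j}]$. With $Q_0=[0,2\cdot2^{-j}]$ and $Q'=[0,2^{-j}]$ you get $\dist(Q^{(j)},Q')=2\cdot2^{-j}>\tfrac32\cdot2^{-j}=\sqrt d(2p+1)2^{-j-1}$. The correct merged bound $\sqrt d(p+1)2^{-j}$ equals what you get by going through $Q_0$. So merging gains nothing, and your intermediate claim $\dist(Q',Q)\le\sqrt d(2p+1)2^{-j}$ is not established.

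With the corrected term, case (ii) still closes, since $\sqrt d(2p+\tfrac32)2^{-j}\le\tilde C2^{-j}$. Case (i) does not close as written. There you bound the geometric sum by its infinite tail and bound $\dist(Q_{ch},Q)$ by $\sqrt d\,2^{-j-1}$ separately, which yields only $\sqrt d(2p+2)2^{-j}$. That is below $\tilde C2^{-j}$ only for $p\ge1$. To cover every $p$, keep the finite sum exact as the paper does: its remainder $-\sqrt d(2p+1)2^{-\ell-2}$ absorbs $\dist(Q_{ch},Q)=\sqrt d\,2^{-\ell-2}$.
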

			
			\begin{proof}
				We present a unified proof for both cases. Let $Q''$ be the cell that initiates the recursion: $Q'' = Q_{ch}$ in case \ref{item 1:lema-complejidad}, and $Q'' = Q$ in case \ref{item 2:lema-complejidad}. Since $Q'$ is generated from $Q_0$, the recursive nature of the algorithm implies the existence of a sequence of cells $\{Q_0, Q_1, \dots, Q_k = Q''\}$ such that $Q_{j-1} \subset \mathcal{N}_{Q_j} \cap \Omega_0$ and $\lev(Q_{j-1}) = \lev(Q_j) - 1$ for $j = 1, \dots, k$. 
				
				By \Cref{Ob: distancia entre Q y Q' de CQ}, the distance between consecutive cells in this sequence is bounded by:
				\begin{equation*}\label{eq: distancia Qj Qj-1}
					\dist(Q_j, Q_{j-1}) \leq \sqrt{d} \, (2p+1) 2^{-\lev(Q_j)-1}.
				\end{equation*}
				Using the triangle inequality along the sequence, and knowing $Q' \in \children(Q_0)$, we estimate $\dist(Q', Q'')$:
				\[
				\dist(Q', Q'') \leq \dist(Q', Q_0) + \sum_{j=1}^k \dist(Q_{j-1}, Q_j) \leq \sqrt{d} \, 2^{-\lev(Q')-1} + \sum_{j=1}^k \sqrt{d} \, (2p+1) 2^{-\lev(Q_j)-1}.
				\]
				Since $\lev(Q_j) = \lev(Q') + j - 1$, we can rewrite the sum:
				\begin{equation}\label{eq: bound_Q_tilde}
					\dist(Q', Q'') \leq \sqrt{d} \, (2p+1) 2^{-\lev(Q')-1} \left( 1 + \sum_{j=1}^k 2^{-j+1} \right).
				\end{equation}
				
				Now, we split into the two cases to evaluate $\dist(Q', Q)$:
				
				\textbf{Case \ref{item 2:lema-complejidad}:} Here, $Q'' = Q$. The sum in \eqref{eq: bound_Q_tilde} is strictly bounded by $2$. Thus,
				\[
				\dist(Q', Q) = \dist(Q', Q'') \leq 3 \sqrt{d} \, (2p+1) 2^{-\lev(Q')-1} = 2^{-\lev(Q')} \tilde{C}.
				\]
				
				\textbf{Case \ref{item 1:lema-complejidad}:} Here, $Q'' = Q_{ch} \in \children(Q)$, so $\dist(Q'', Q) \leq \sqrt{d} \, 2^{-\lev(Q_{ch})-1}$. Noting that $\lev(Q_{ch}) = \lev(Q') + k$, we apply the triangle inequality again:
				\[
				\dist(Q', Q) \leq \dist(Q', Q'') + \dist(Q'', Q) \leq \sqrt{d} \, (2p+1) 2^{-\lev(Q')-1} \left( 1 + \sum_{j=1}^k 2^{-j+1} + 2^{-k+1} \right).
				\]
				Since $1 + \sum_{j=1}^k 2^{-j+1} + 2^{-k+1} = 3$, the bound $\dist(Q', Q) \leq 2^{-\lev(Q')} \tilde{C}$ is directly obtained.
			\end{proof}
			
			We conclude with the main complexity result, whose proof follows from \cite[Theorem 13]{BGMP2016} by applying \Cref{lema: Complejidad}.
			
			\begin{theorem}[Refinement complexity] \label{teo: complejidad de refinamiento}
				Let $\QQQ_0 = \QQ_0$ and $J\in\NN$. For $j = 1, \dots, J$, let $\MMM_{j-1} \subset \QQQ_{j-1}$ be an arbitrary subset of marked cells, and let $\QQQ_j$ be the mesh obtained refining the cells given by the output of \textbf{weakly\_admissible\_marking\_recursive} (cf.~\Cref{alg: MARK_WEAKLY}). Then, the overall complexity is bounded by:
				\begin{equation*}
					\#\QQQ_J - \#\QQQ_0 \leq C \sum_{j=0}^{J-1} \#\MMM_j,
				\end{equation*}
				where $C \leq 4(4\tilde{C} + 1)^d$ is a constant depending only on $d$ and $\mathbf{p}$.
			\end{theorem}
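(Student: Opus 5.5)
The plan follows the standard counting argument of \cite[Theorem 13]{BGMP2016} (going back to \cite{NSV2009}), with \Cref{lema: Complejidad} as the geometric input. Fix a parameter $\lambda>0$ to be chosen, and define for each cell $Q'$ ever created and each marked cell $Q\in\MMMuno{}\cup\MMMdos{}$ (produced at some iteration $j$ from $\MMM_j$) the allocation
\[
\lambda(Q',Q):=\begin{cases}2^{\lev(Q)-\lev(Q')}, & \text{if } \dist(Q',Q)<\tilde C\,2^{-\lev(Q')+1} \text{ and } \lev(Q')\le \lev(Q)+1,\\ 0,&\text{otherwise.}\end{cases}
\]
The proof has two halves: (a) every newly created cell $Q'\in\QQQ_J\setminus\QQQ_0$ receives total allocation at least a fixed positive amount, and (b) every marked cell distributes at most a bounded total. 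Together with \Cref{ob: cardinal de M}, which gives $\#(\MMMuno{}\cup\MMMdos{})\le\#\MMM_j$ at each step, this yields $\#\QQQ_J-\#\QQQ_0\le\#\{\text{created cells}\}\le C\sum_j\#\MMM_j$.

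For the upper bound (b), fix a marked $Q$ and a level $k\le\lev(Q)+1$. The cells $Q'$ of level $k$ with $\dist(Q',Q)<\tilde C 2^{-k+1}$ have midpoints in a box of side $4\tilde C\,2^{-k}$, so there are at most $(4\tilde C+1)^d$ of them. Summing $2^{\lev(Q)-k}$ over $k\le \lev(Q)+1$ gives a geometric factor of at most $4$. Hence $\sum_{Q'}\lambda(Q',Q)\le 4(4\tilde C+1)^d$.

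For the lower bound (a), take a created cell $Q'=Q'_0$, produced at iteration $j_0$. By \Cref{lema: Complejidad} it descends from a marked $Q_1$ with $\dist(Q'_0,Q_1)\le\tilde C2^{-\lev(Q'_0)}$, and by \Cref{ob: niveles refine_recursive} (accounting for the children in case (i)) we have $\lev(Q'_0)\le\lev(Q_1)+1$. If $\lev(Q_1)\ge\lev(Q'_0)$, then $Q'_0$ already gets allocation at least $1$ and we stop. Otherwise $\lev(Q_1)=\lev(Q'_0)-1$, which means $Q_1$ is the parent generation; we then pick a cell $Q'_1$ created at an earlier iteration with $Q_1\subset Q'_1$ or $Q_1=Q'_1$, of level $\lev(Q'_0)-1$, and iterate. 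This produces a chain with $\lev(Q'_i)=\lev(Q'_0)-i$. The chain terminates because levels cannot drop below $0$ and $\QQQ_0=\QQ_0$; when it hits level $0$, or reaches a marked cell of level at least that of $Q'_i$, it provides weight at least $1$.

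The main obstacle is checking that along the whole chain the distance condition stays inside the allocation radius for the original $Q'_0$. The triangle inequality gives $\dist(Q'_0,Q_{i})\le\sum_{m<i}\tilde C2^{-\lev(Q'_0)+m}\le\tilde C\,2^{-\lev(Q'_0)+i}$, which is of the order of $2^{-\lev(Q'_0)+1}$ only up to the factor $2^{i-1}$. That factor must be absorbed by the corresponding allocation weight $2^{\lev(Q_i)-\lev(Q'_0)}=2^{-i+1}$. I would handle this as in \cite{BGMP2016}: redefine the radius condition with $\lev(Q)$ in place of $\lev(Q')$, namely $\dist(Q',Q)<\tilde C 2^{-\lev(Q)+1}$, and recheck that the count in (b) remains geometric. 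Then I would argue that if every cell in the chain received allocation less than $1$, the chain would be forced to continue indefinitely, contradicting termination at level $0$. Getting this bookkeeping right, together with the half-level shift in case (i) of \Cref{lema: Complejidad}, is the delicate step.
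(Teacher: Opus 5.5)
You follow the same route as the paper, which invokes \cite[Theorem 13]{BGMP2016} with \Cref{lema: Complejidad} as input. However, your allocation function has its exponent reversed, and both halves of your argument are built on it.

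With $\lambda(Q',Q)=2^{\lev(Q)-\lev(Q')}$ and $\lev(Q')\le\lev(Q)+1$, the exponent runs over $-1,0,1,\dots,\lev(Q)-1$. So the sum in (b) is $\sum_{k\le\lev(Q)+1}2^{\lev(Q)-k}$, which is of order $2^{\lev(Q)}$, not bounded by $4$. A fine marked cell would be charged exponentially by the coarse created cells around it. Moving the radius to $\tilde C2^{-\lev(Q)+1}$, as you suggest at the end, does not help: the per-level count is still $O(1)$ at every level and the weights are unchanged. The factor $4$, and the constant $4(4\tilde C+1)^d$ in the statement, come from $\lambda(Q',Q)=2^{\lev(Q')-\lev(Q)}$ (created minus marked), with radius $2\tilde C\,2^{-\lev(Q')}$ and $\lev(Q')\le\lev(Q)+1$. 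Your count of $(4\tilde C+1)^d$ cells per level is then exactly what is needed.

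With the correct weight, the easy and hard cases of (a) are the opposite of yours. If the marked cell $Q_1$ that triggered $Q'_0$ has $\lev(Q_1)\le\lev(Q'_0)$, then $\lambda(Q'_0,Q_1)\ge1$ immediately. The real difficulty is $\lev(Q_1)>\lev(Q'_0)$, i.e.\ a fine marked cell producing a coarser new cell through the recursion in \textbf{mark\_recursive}. Then $\lev(Q_1)\ge2$, so $Q_1$ (active, or a deactivated ancestor produced by \Cref{alg:update_marked_elements}) is not in $\QQQ_0=\QQ_0$. It was therefore created at an earlier iteration because of some marked $Q_2$, and so on. Setting $Q_0:=Q'_0$, this gives marked cells $Q_1,Q_2,\dots$ at strictly decreasing iterations with $\dist(Q_{i-1},Q_i)\le\tilde C2^{-\lev(Q_{i-1})}$ and $\lev(Q_i)\ge\lev(Q_{i-1})-1$, ending at level $0$. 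Let $s$ be the first index with $\lev(Q_s)\le\lev(Q'_0)$, so that $\lev(Q_s)\ge\lev(Q'_0)-1$.

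The idea you are missing is that the chain need not stay in the ball. If $\dist(Q'_0,Q_i)<2\tilde C2^{-\lev(Q'_0)}$ for all $i\le s$, then $\lambda(Q'_0,Q_s)\ge1$. Otherwise, for the first index $r\le s$ leaving the ball,
\[
2\tilde C\,2^{-\lev(Q'_0)}\le\dist(Q'_0,Q_r)\le\tilde C\,2^{-\lev(Q'_0)}+\tilde C\sum_{i=1}^{r-1}2^{-\lev(Q_i)},
\]
hence $\sum_{i=1}^{r-1}\lambda(Q'_0,Q_i)=\sum_{i=1}^{r-1}2^{\lev(Q'_0)-\lev(Q_i)}\ge1$. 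The weight is exactly proportional to the step bound of \Cref{lema: Complejidad}, so leaving the ball already forces an allocation of at least $1$.

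Your chain instead descends through coarser cells, with weights $2^{-i}$ decreasing while the distances grow. Your assertion that a chain in which every term has weight below $1$ must continue indefinitely has no justification, and reaching level $0$ produces no contradiction there.
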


			\section{Local approximation on weakly admissible meshes} \label{S:QIs}
			
			We now present the construction of a quasi-interpolant operator in hierarchical spline spaces. Originally introduced by~\cite{K98} for bivariate splines on infinite uniform knot vectors, this operator was later extended to $d$-dimensional domains with open knot vectors and multiple internal knots in~\cite{BG15}. In that work, the authors established local $L^q$-approximation estimates for $1 \leq q \leq \infty$. This section further extends their results---specifically~\cite[Theorem 4.9]{BG15}---by obtaining optimal estimates in higher-order norms.
			
			\subsection{Quasi-interpolation in hierarchical spline spaces}
			
			In order to define a multilevel operator, we combine local quasi-interpolant operators $P_{\ell}$, defined at each level $\ell = 0, 1, \dots, n-1$, which satisfy certain properties. To define $P_{\ell}$, we follow the ideas and techniques in~\cite{LLM01} using a local $L^2$-projection. For each $\ell$, we consider the subset of the basis $\mathcal{B}_{\ell}$ defined by
			\begin{equation*}
				\mathcal{B}_{\ell,\omega_{\ell}}:=\left\lbrace \beta \in \mathcal{B}_{\ell} \mid \exists Q \in \mathcal{Q}_{\ell}: Q \subset \supp(\beta) \cap \omega_{\ell} \right\rbrace.
			\end{equation*}
			Let $\mathcal{S}_{\omega_{\ell}}:= \Span(\mathcal{B}_{\ell,\omega_{\ell}})\subset \mathcal{S}_{\ell}$. We then define $P_{\ell}: L^{q}(\Omega) \longrightarrow  \mathcal{S}_{\omega_{\ell}}$ for $\ell=0,\dots,n-1$, such that
			\begin{equation} \label{def: quasi-interp level ell}
				P_{\ell}(f):=\sum_{\beta \in\mathcal{B}_{\ell,\omega_{\ell}}} \lambda_{\beta}(f)\beta, \quad \forall f \in L^{q}(\Omega), 
			\end{equation}
			where $\{\lambda_{\beta}\}$ are linear functionals that have local support, form a dual basis of $\mathcal{B}_{\ell ,\omega_{\ell}}$, and are $L^q$-stable as established on~\cite[Proposition 4.3]{BG15}. The following result provides a local version of~\cite[Theorem 4.4]{BG15} summarizing the properties of $P_{\ell}$, and its proof follows the same arguments. From now on, the usual $L^q$-norm in $\Omega$ is denoted by $\|\cdot\|_{q,\Omega}$. Similarly, $W^{k}_{q}(\Omega)$ denotes the standard Sobolev space equipped with the seminorm $|\cdot|_{k,q,\Omega}$. 
			
			\begin{theorem}\label{teo: properties of P_ell}
				Let $\{ P_{\ell} \}_{i=0}^{n-1}$ be a set of operators defined as in \eqref{def: quasi-interp level ell}. For $\ell=0,1,\dots,n-1$, we have:	
				\begin{enumerate}
					\item[(i)] $P_{\ell}$ preserves splines in $\mathcal{S}_{\omega_{\ell}}$, i.e., $P_{\ell}s = s$, for all $s\in\mathcal{S}_{\omega_{\ell}}$.
					\item[(ii)] $P_{\ell}$ is supported in $\omega_{\ell}$, that is, if $f|_{\omega_{\ell}}\equiv 0$, then $P_{\ell}f=0$.
					\item[(iii)] For all $s \in \mathcal{S}_{\ell}$, $P_{\ell}s\equiv s$ in $\omega_{\ell}$.					
				\end{enumerate}
				Furthermore, let $C$ be a union of cells of level $\ell$, and denote its neighborhood by $V_C^\ell := \bigcup \{\tilde{Q}\,\mid\, Q\in\QQ_\ell,\, Q\subset C\}$. Then:
				\begin{enumerate}
					\item[(iv)] (Stability) For all $f \in L^{q}(\omega_{\ell})$ and  $C \subset \Omega_\ell$, $P_{\ell}$ satisfies $\|P_{\ell}f\|_{q,C}\leq C_{S}\|f\|_{q,\,V_C^\ell \cap \omega_{\ell}}$, where the constant $C_{S} > 0$ depends only on $p$.
					\item[(v)] (Approximation) For all $f \in W^{p+1}_{q}(\Omega)$ and $C\subset \omega_\ell$, it holds that  $\|f-P_{\ell}f\|_{q,C} \leq C_{A}h_{\ell}^{p+1}|f|_{p+1,q,V_C^\ell}$, where the constant $C_{A} > 0$ depends only on $p$ and $d$.
				\end{enumerate}
			\end{theorem}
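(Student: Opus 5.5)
The plan is the classical local-projection argument of \cite{LLM01}, localized to $\omega_\ell$, as in \cite[Theorem 4.4]{BG15}. The approach rests on three facts about $P_\ell$ in \eqref{def: quasi-interp level ell}:
\begin{itemize}
\item the dual-basis property $\lambda_\beta(\beta')=\delta_{\beta\beta'}$ for $\beta,\beta'\in\mathcal{B}_{\ell,\omega_\ell}$;
\item each $\lambda_\beta$ depends only on $f$ restricted to a single cell $Q_\beta\in\QQ_\ell$ with $Q_\beta\subset\supp\beta\cap\omega_\ell$, chosen as in \cite{LLM01} via a local $L^2$-projection;
\item the $L^q$-stability $|\lambda_\beta(f)|\le C h_\ell^{-d/q}\|f\|_{q,Q_\beta}$ from \cite[Proposition 4.3]{BG15}.
\end{itemize}

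Properties (i) and (ii) follow directly. For (i), write $s=\sum c_{\beta'}\beta'$ and apply the duality to get $\lambda_\beta(s)=c_\beta$. For (ii), if $f|_{\omega_\ell}=0$ then every $\lambda_\beta(f)=0$, because $Q_\beta\subset\omega_\ell$.

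For (iii), take $s\in\mathcal{S}_\ell$ and a cell $Q\subset\omega_\ell$. Since $\tilde Q\subset\Omega_\ell$, I would restrict $s$ to $\omega_\ell$ and write it as $s|_{\omega_\ell}=\sigma|_{\omega_\ell}$ with $\sigma\in\mathcal{S}_{\omega_\ell}$. Here $\sigma$ keeps exactly the B-splines of $s$ that touch a cell of $\omega_\ell$, and those B-splines are precisely $\mathcal{B}_{\ell,\omega_\ell}$. Then $P_\ell s$ depends only on $s|_{\omega_\ell}$, since the $\lambda_\beta$ are localized in $\omega_\ell$, so $P_\ell s=P_\ell\sigma=\sigma=s$ on $\omega_\ell$.

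For (iv), fix a cell $Q\subset C$. On $Q$ only the $\beta$ with $Q\subset\supp\beta$ are nonzero, and each such $\beta$ has $Q_\beta\subset\supp\beta\cap\omega_\ell\subset\tilde Q\cap\omega_\ell$. Using $0\le\beta\le1$, the bound on the number of overlapping B-splines $(p+1)^d$, and the functional stability, I get $\|P_\ell f\|_{q,Q}\le C\|f\|_{q,\tilde Q\cap\omega_\ell}$. Summing the $q$-th powers over the cells $Q\subset C$ introduces only a bounded overlap factor. The constant is then $p$-dependent; uniformity in $\ell$ comes from the scaling $h_\ell^{-d/q}\cdot h_\ell^{d/q}$.

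For (v), fix a cell $Q\subset C\subset\omega_\ell$ and let $\pi$ be the averaged Taylor polynomial of degree $p$ of $f$ on the box hull of $\tilde Q$. Since $\pi\in\mathcal{S}_\ell$ locally, (iii) gives $P_\ell\pi=\pi$ on $Q$. Then
\[
\|f-P_\ell f\|_{q,Q}\le\|f-\pi\|_{q,Q}+\|P_\ell(f-\pi)\|_{q,Q}\le(1+C_S)\|f-\pi\|_{q,\tilde Q}.
\]
The Bramble--Hilbert lemma on $\tilde Q$, which is a box of side at most $(2p+1)h_\ell$, bounds the last norm by $C h_\ell^{p+1}|f|_{p+1,q,\tilde Q}$. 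Summing over the cells $Q\subset C$ gives the result with $V_C^\ell$.

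The main obstacle is (iii) near $\partial\omega_\ell$. I expect the delicate point to be checking that a level-$\ell$ B-spline that touches $\omega_\ell$ without lying in it does not break the reproduction. I would handle this by insisting that the functional locations satisfy $Q_\beta\subset\omega_\ell$ and that all of $\tilde Q$ lies inside $\Omega_\ell$. The Bramble--Hilbert step also needs $\tilde Q$ to be a box, or at least star-shaped with respect to a ball; for boundary cells this holds because $\tilde Q=\widehat Q\cap\Omega_0$ is a box.
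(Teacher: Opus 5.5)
Your proposal is correct and takes essentially the paper's route. The paper gives no separate proof: it defers to the local $L^2$-projection argument of \cite{LLM01} as carried out in \cite[Theorem 4.4]{BG15}, which is exactly duality plus localization of each $\lambda_\beta$ on a cell $Q_\beta\subset\supp\beta\cap\omega_\ell$ for (i)--(iii), cellwise stability with bounded overlap for (iv), and polynomial reproduction plus Bramble--Hilbert on the box $\tilde Q$ for (v); and your argument for (iii), which writes $s=\sigma$ on $\omega_\ell$ with $\sigma\in\mathcal{S}_{\omega_\ell}$ and then uses (i) and (ii), already settles the boundary concern you raise, with no condition needed beyond $Q_\beta\subset\omega_\ell$.
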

			
			\begin{definition}[A hierarchical quasi-interpolant operator]
				Given the set of quasi-interpolation operators defined in \eqref{def: quasi-interp level ell} for each level of the hierarchy, we define $\Pi:\;L^{q}(\Omega)\longrightarrow \Span\mathcal{H}$ by
				\begin{equation} \label{def: operador Pi multinivel}
					\begin{cases}
						\Pi_{0}:=P_{0},\\
						\Pi_{\ell+1}:=\Pi_{\ell}+P_{\ell +1}(\Id-\Pi_{\ell}), \quad \ell=0,\dots,n-2,\\
						\Pi:=\Pi_{n-1}.
					\end{cases}
				\end{equation}
			\end{definition}
			
			\begin{remark}\label{obs: preserva splines en Scero}
				Note that Theorem \ref{teo: properties of P_ell} (i) implies that $P_{0}s=s$ for all $s \in \mathcal{S}_{0}$. Consequently, the definition of the operator $\Pi$ yields $\Pi s=s$ for all $s \in \mathcal{S}_{0}$. Therefore, the operator preserves splines at the initial level, and in particular, in the tensor product polynomial space $\mathbb{P}_{\mathbf{p}}$, where $\mathbf{p}=(p,\dots,p)$.
			\end{remark}
			
			The next result can be found in~\cite[Lemma 4.8]{BG15} and follows the ideas from~\cite[Lemma 2.2.1.d]{K98}.
			\begin{lemma}
				For $f \in L^{q}(\Omega)$ it holds that $\Pi_{\ell} f=P_{\ell}f$ in $\omega_{\ell}$ for $\ell=0,\dots,n-1$.
			\end{lemma}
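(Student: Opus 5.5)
The plan is to argue by induction on $\ell$, using the recursive definition \eqref{def: operador Pi multinivel} together with the nesting $\omega_{\ell}\subset\omega_{\ell-1}$ of weakly admissible meshes and the reproduction property (iii) of \Cref{teo: properties of P_ell}.

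The base case $\ell=0$ is immediate, since $\Pi_0=P_0$ by definition.

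For the inductive step, assume $\Pi_{\ell}f=P_{\ell}f$ on $\omega_{\ell}$ and consider level $\ell+1$. By definition,
\[
\Pi_{\ell+1}f=\Pi_\ell f+P_{\ell+1}f-P_{\ell+1}\Pi_\ell f .
\]
It therefore suffices to show that $P_{\ell+1}\Pi_\ell f=\Pi_\ell f$ on $\omega_{\ell+1}$.

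The key observation is that $\Pi_\ell f$ lies in $\mathcal S_\ell$. Each $P_k$ maps into $\mathcal S_{\omega_k}\subset\mathcal S_k\subset\mathcal S_\ell$ for $k\le\ell$, and $\Pi_\ell$ is a sum of compositions whose outer factor is some $P_k$ with $k\le \ell$. Hence $\Pi_\ell f\in\mathcal S_\ell\subset\mathcal S_{\ell+1}$.

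Property (iii) of \Cref{teo: properties of P_ell}, applied at level $\ell+1$ with $s=\Pi_\ell f\in\mathcal S_{\ell+1}$, then gives $P_{\ell+1}\Pi_\ell f=\Pi_\ell f$ on $\omega_{\ell+1}$. Consequently $\Pi_{\ell+1}f=P_{\ell+1}f$ on $\omega_{\ell+1}$, which closes the induction.

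With this route the inductive hypothesis is not actually needed; only the membership $\Pi_\ell f\in\mathcal S_{\ell+1}$ is used. Weak admissibility is also not needed for this identity itself; it becomes relevant only in later estimates that exploit $\omega_{\ell+1}\subset\omega_\ell$.

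The only delicate point is justifying (iii), i.e. that $P_{\ell+1}s=s$ on $\omega_{\ell+1}$ for every $s\in\mathcal S_{\ell+1}$, not just for $s\in\mathcal S_{\omega_{\ell+1}}$. Since (iii) is stated earlier in \Cref{teo: properties of P_ell}, I will take it as given. For completeness, it follows by writing $s=s_1+s_2$, where $s_1$ collects the B-splines in $\mathcal B_{\ell+1,\omega_{\ell+1}}$ and $s_2$ the remaining ones. The part $s_2$ vanishes on $\omega_{\ell+1}$, so $P_{\ell+1}s_2=0$ by (ii), while $P_{\ell+1}s_1=s_1$ by (i).
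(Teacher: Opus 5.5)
Your proof is correct and follows essentially the same argument as the result the paper defers to (\cite[Lemma 4.8]{BG15}, after Kraft): since $\Pi_{\ell}f\in\mathcal{S}_{\ell}\subset\mathcal{S}_{\ell+1}$, property (iii) of Theorem~\ref{teo: properties of P_ell} gives $P_{\ell+1}\Pi_\ell f=\Pi_\ell f$ on $\omega_{\ell+1}$, so the recursion reduces to $\Pi_{\ell+1}f=P_{\ell+1}f$ there. Your remarks that neither the inductive hypothesis nor weak admissibility is actually used (so the mention of $\omega_\ell\subset\omega_{\ell-1}$ in your opening plan is superfluous) and your derivation of (iii) from (i) and (ii) are both accurate.
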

			
			The operator $\Pi$ can be expressed in terms of the operators $P_k$ when restricted to the subdomains $\omega_{\ell}$ (cf.~\cite[Theorem 4.7]{BG15}). 
			\begin{lemma}\label{teo: Pi on omeguita ell}
				If $\omega_{n-1} \subset \omega_{n-2} \subset \ldots \subset \omega_{2} \subset \omega_{1} \subset \omega_{0}$, then 
				\begin{equation*}
					\Pi f = P_{\ell}f + \sum_{k=\ell+1}^{n-1}P_{k}(f-P_{k-1}f) \quad \text{in } \omega_{\ell} \quad (\ell=0,\dots,n-1),
				\end{equation*}
				for all $f \in L^{q}(\Omega)$.
			\end{lemma}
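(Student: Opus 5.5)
The plan is to unroll the recursion \eqref{def: operador Pi multinivel} and to use the preceding lemma, which says $\Pi_k f = P_k f$ in $\omega_k$, together with the nestedness of the $\omega_k$. The first step is to write the telescoping identity
\[
\Pi f = \Pi_{n-1} f = \Pi_\ell f + \sum_{k=\ell+1}^{n-1} (\Pi_k f - \Pi_{k-1} f) = \Pi_\ell f + \sum_{k=\ell+1}^{n-1} P_k(f - \Pi_{k-1} f),
\]
which holds on all of $\Omega$ and for every $\ell$. On $\omega_\ell$ the preceding lemma gives $\Pi_\ell f = P_\ell f$, so the first term is already in the desired form.

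The second step handles each summand $P_k(f-\Pi_{k-1}f)$ with $k \ge \ell+1$, with the goal of replacing $\Pi_{k-1}f$ by $P_{k-1}f$. By Theorem~\ref{teo: properties of P_ell}(ii), $P_k$ is supported in $\omega_k$: if $g|_{\omega_k}\equiv 0$ then $P_k g = 0$. Hence, by linearity, $P_k g_1 = P_k g_2$ whenever $g_1 = g_2$ on $\omega_k$. The nestedness hypothesis gives $\omega_k \subset \omega_{k-1}$, and the lemma gives $\Pi_{k-1} f = P_{k-1} f$ on $\omega_{k-1}$. Therefore $f - \Pi_{k-1} f$ and $f - P_{k-1} f$ agree on $\omega_k$, and so $P_k(f-\Pi_{k-1}f) = P_k(f-P_{k-1}f)$ everywhere on $\Omega$, not merely on $\omega_\ell$.

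Combining the two steps, on $\omega_\ell$ we obtain $\Pi f = P_\ell f + \sum_{k=\ell+1}^{n-1} P_k(f-P_{k-1}f)$. The case $\ell = n-1$ is the trivial one, with an empty sum.

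The only delicate point is the meaning of ``supported in $\omega_k$'' in (ii): it must be read as dependence on the values of the argument on $\omega_k$ only, with the functionals $\lambda_\beta$ for $\beta \in \mathcal{B}_{k,\omega_k}$ having supports inside $\omega_k$. This reading is exactly how (ii) is stated, so it can be used directly. Equalities are understood almost everywhere, which is harmless since $\omega_k$ is a union of closed cells.
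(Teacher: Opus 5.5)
Your proof is correct. It is essentially the standard argument behind this lemma: the paper gives no proof of its own and defers to \cite[Theorem 4.7]{BG15}, which rests on the preceding lemma $\Pi_\ell f = P_\ell f$ on $\omega_\ell$, just as your argument does. Your steps are the same as that derivation: you telescope the recursion, replace $\Pi_\ell f$ by $P_\ell f$ on $\omega_\ell$, and swap $\Pi_{k-1}f$ for $P_{k-1}f$ inside $P_k$ using the locality property (ii) together with $\omega_k\subset\omega_{k-1}$.
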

			
			\subsection{Approximation properties of a multilevel quasi-interpolant}
			
			Recall that we consider uniform meshes at each level constituted by hypercubes of edge length $h_{{\ell}}$, and the nested spaces are obtained by dyadic refinement. That is, $\mathcal{S}_{\ell}$ is obtained from $\mathcal{S}_{\ell-1}$ by adding a breakpoint exactly at the midpoint of each edge of the hypercubes in the level $\ell-1$ mesh, yielding $h_{\ell}= h_{\ell-1}/2$ for all $\ell=0,\dots,n-1$. 
			
			We denote by $\mathcal{P}_{s}$ the space of $d$-variate polynomials of total degree at most $s$. The following inverse inequality will be key for proving the main result of this section.
			
			\begin{lemma}(\cite[Lemma 1]{S17})\label{lem: inverse ineq}
				Let $\Gamma$ be a convex body of $\mathbb{R}^{d}$ and let $r_{\Gamma}$ be the radius of the largest ball contained in $\Gamma$. For any $g \in \mathcal{P}_{s}$ and $1 \leq q \leq \infty$ we have
				\begin{equation*}
					\|D^{\bm{\alpha}}g\|_{q,\Gamma}\leq \frac{C}{r_{\Gamma}^{|\bm{\alpha}|}}\|g\|_{q,\Gamma}, \quad \text{for } 0\leq|\bm{\alpha}|\leq s,
				\end{equation*}
				where $C>0$ is a constant independent of $g$ and $\Gamma$.
			\end{lemma}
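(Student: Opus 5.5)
The inverse inequality in Lemma~\ref{lem: inverse ineq} is a classical Markov-type inequality, and I would prove it by reduction to a reference configuration using affine invariance and John's ellipsoid theorem.

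\textbf{Reduction to a single derivative and a normalized body.} By iterating, it suffices to prove the case $|\bm{\alpha}|=1$. Indeed, $D^{\bm{\alpha}}g$ is again a polynomial in $\mathcal{P}_s$ (of lower degree), and the constants multiply. Since there are at most $s$ steps, the final constant depends only on $s$, $d$, $q$.

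For a convex body $\Gamma$, John's theorem provides an ellipsoid $E$ with $E\subset\Gamma\subset d\,E$ (after centering). Let $T$ be the affine map sending the unit ball $B$ onto $E$. Pulling back, $\hat g=g\circ T\in\mathcal{P}_s$, and $\hat\Gamma=T^{-1}\Gamma$ satisfies $B\subset\hat\Gamma\subset dB$.

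\textbf{Estimate on the reference body.} On $\hat\Gamma$, equivalence of norms on the finite-dimensional space $\mathcal{P}_s$ should give
\[
\|\nabla\hat g\|_{q,\hat\Gamma}\le \hat C\,\|\hat g\|_{q,\hat\Gamma}.
\]
The point is that $\hat C$ must be uniform over all such $\hat\Gamma$. I would obtain uniformity by sandwiching: $\|\nabla \hat g\|_{q,\hat\Gamma}\le\|\nabla\hat g\|_{q,dB}\le C_1\|\hat g\|_{q,B}\le C_1\|\hat g\|_{q,\hat\Gamma}$. The middle inequality holds because $\|\cdot\|_{q,B}$ is a norm on polynomials and the left side is a seminorm, both on fixed domains.

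\textbf{Transforming back (main obstacle).} Let $T x= Ax+b$, so $\nabla g=A^{-T}\nabla\hat g\circ T^{-1}$. The Jacobian factors $|\det A|^{1/q}$ cancel on both sides, leaving $\|D^{e_i}g\|_{q,\Gamma}\le \|A^{-1}\|\,C_1\|g\|_{q,\Gamma}$.

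The operator norm $\|A^{-1}\|$ equals the reciprocal of the smallest semi-axis of $E$. This gives a factor $1/r_E$, not $1/r_\Gamma$, and relating the two is the delicate step: one needs that the inradius of $\Gamma$ is comparable to the smallest semi-axis of $E$. Since $E\subset\Gamma$, we have $r_E\le r_\Gamma$, so the bound $1/r_E\ge 1/r_\Gamma$ points the wrong way. The reverse inequality is needed: a ball of radius $r_\Gamma$ inside $\Gamma\subset dE$ forces every semi-axis of $dE$ to be at least $r_\Gamma$, so $r_E\ge r_\Gamma/d$. This yields $\|A^{-1}\|\le d/r_\Gamma$, completing the proof with $C=C(d,s,q)$.

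For $q=\infty$ the same argument applies without the Jacobian. Finally, the constant can be made uniform in $q$ by interpolation, or simply accepted as depending on $q$.
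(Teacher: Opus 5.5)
The paper gives no proof of this lemma; it is imported from \cite{S17}, so there is no internal argument to compare yours with. Your proof is correct. John's theorem gives $\hat\Gamma=T^{-1}\Gamma$ with $B\subset\hat\Gamma\subset dB$. The sandwich $\|\nabla\hat g\|_{q,\hat\Gamma}\le\|\nabla\hat g\|_{q,dB}\le C_1\|\hat g\|_{q,B}\le C_1\|\hat g\|_{q,\hat\Gamma}$ is exactly what makes the reference constant uniform over all such $\hat\Gamma$. The Jacobian factors cancel, and $\|A^{-1}\|\le d/r_\Gamma$ then yields $C=(dC_1)^{|\bm{\alpha}|}$.

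The other common route is a multivariate Markov inequality on convex bodies in terms of the minimal width $w_\Gamma$ (Wilhelmsen, Kr\'o\'o--R\'ev\'esz), combined with $w_\Gamma\ge 2r_\Gamma$. That route gives explicit constants of order $s^2$, most readily in the sup-norm. Your compactness argument gives no explicit constant, but it is elementary and handles every $1\le q\le\infty$ at once.

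Two steps deserve a line more than you give them.

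First, the ball of radius $r_\Gamma$ inside $\Gamma\subset dE$ need not be centred at the centre of $dE$. Reflect it through that centre, which is a symmetry of $dE$, and use convexity to get a concentric ball of radius $r_\Gamma$ in $dE$. Only then does testing along the shortest axis show that every semi-axis of $dE$ is at least $r_\Gamma$. Equivalently, the minimal width of $dE$, which is twice its smallest semi-axis, is at least $2r_\Gamma$.

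Second, ``interpolation'' is not the right justification for uniformity in $q$. The inequality lives only on the subspace $\mathcal{P}_s$, and Riesz--Thorin does not apply to it directly. A direct chain suffices:
\[
\|\nabla\hat g\|_{q,dB}\le |dB|^{1/q}\|\nabla\hat g\|_{\infty,dB}\le C_*|dB|^{1/q}\|\hat g\|_{1,B}\le C_*|dB|^{1/q}|B|^{1-1/q}\|\hat g\|_{q,B}=C_*|B|\,d^{d/q}\|\hat g\|_{q,B}.
\]
Here $C_*$ is the equivalence constant on $\mathcal{P}_s$ between $\hat g\mapsto\|\nabla\hat g\|_{\infty,dB}$ and $\hat g\mapsto\|\hat g\|_{1,B}$. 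Hence $C_1\le C_*|B|\,d^{d}$ for every $q$.

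You should state this rather than leave it optional. \Cref{theo: sobolev norm estimation} asserts a constant depending only on $p$ and $d$, and that claim relies on the inverse inequality being uniform in $q$.
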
 
			
			\begin{theorem}[Local approximation estimates]\label{theo: sobolev norm estimation}
				Let $\mathcal{H}$ be the set of hierarchical B-splines associated with a hierarchy of subdomains $\mathbf{\Omega}_{n}$ of depth $n$, and let $\Pi: L^{q}(\Omega) \longrightarrow \Span \mathcal{H}$ be the multiscale quasi-interpolant defined in \eqref{def: operador Pi multinivel}. If the hierarchical mesh is weakly admissible, i.e.,
				\begin{equation}\label{wahm}
					\omega _{n-1} \subset \omega _{n-2} \subset \ldots \subset \omega _{2} \subset \omega _{1} \subset \omega _{0},
				\end{equation}
				 then, for any cell $Q\in\QQ_{\ell}$ such that $Q\subset \omega_{\ell}$ ($0 \leq \ell \leq n-1$) and all $f \in W_{q}^{p+1}(\Omega)$ ($1\leq q\leq \infty$), we have
				\begin{equation}\label{eq: estimate Sobolev}
					\|D^{\bm{\alpha}}(f-\Pi f)\|_{q,Q}\leq C\;h_{\ell}^{p+1-|\bm{\alpha}|}|f|_{p+1,q, N(Q)},\qquad \forall\,\bm{\alpha},\, 0\leq |\bm{\alpha}|\leq p,
				\end{equation}
				where the neighborhood $N(Q)$ is defined as $N(Q) := \tilde{Q}$ if $Q \subset \Omega_\ell \setminus \Omega_{\ell+1}$, and $N(Q) := \widetilde{\parent(Q)}$ if $Q \subset \Omega_{\ell+1}$. The constant $C>0$ depends on $p$ and $d$, but is independent of $f$. 
			\end{theorem}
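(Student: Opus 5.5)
The approach is to use Lemma~\ref{teo: Pi on omeguita ell} to express $\Pi f$ on $Q$ as a telescoping sum of level operators. Fix $Q\in\QQ_\ell$ with $Q\subset\omega_\ell$, and let $\bar\ell$ be the largest index with $Q\subset\omega_{\bar\ell}$. By weak admissibility~\eqref{wahm}, $Q\subset\omega_k$ exactly for $k\le\bar\ell$. Since $Q\not\subset\Omega_{\ell+2}$ (otherwise it would be a deactivated cell inside $\Omega_{\ell+1}$, which the definition of $N(Q)$ excludes), we have $\bar\ell\in\{\ell,\ell+1\}$. Moreover, $\bar\ell=\ell+1$ can only occur when $Q\subset\Omega_{\ell+1}$.

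Consider first the case $Q\subset\Omega_\ell\setminus\Omega_{\ell+1}$. Then $Q\not\subset\omega_k$ for $k>\ell$. Since $P_k$ is supported in $\omega_k$ (Theorem~\ref{teo: properties of P_ell}(ii)) and $\omega_k$ is a union of level-$k$ cells not meeting the interior of $Q$, each term $P_k(\cdot)$ with $k>\ell$ vanishes on $Q$. Lemma~\ref{teo: Pi on omeguita ell} then gives $\Pi f=P_\ell f$ on $Q$.

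Next take $Q\subset\Omega_{\ell+1}$. Lemma~\ref{teo: Pi on omeguita ell} at level $\ell-1$ (with $Q\subset\omega_\ell\subset\omega_{\ell-1}$), or directly at level $\ell$, gives
\[
\Pi f=P_\ell f+P_{\ell+1}(f-P_\ell f)\quad\text{on }Q,
\]
because all higher terms vanish on $Q$. On each child $Q'\in\children(Q)$, this equals $P_\ell f$ if $Q'\not\subset\omega_{\ell+1}$, and it equals $P_{\ell+1}f+P_{\ell+1}(\ldots)$ rearranged via the lemma at level $\ell+1$ if $Q'\subset\omega_{\ell+1}$.

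The key device is to localize to a polynomial. Let $g\in\mathbb{P}_{\mathbf p}$ be an averaged Taylor polynomial (Bramble--Hilbert) of $f$ on the convex box $N(Q)$. Then
\[
|f-g|_{j,q,N(Q)}\lesssim h_\ell^{p+1-j}|f|_{p+1,q,N(Q)},\qquad 0\le j\le p+1.
\]
Since $g\in\mathcal S_0\subset\mathcal S_k$ for all $k$, Theorem~\ref{teo: properties of P_ell}(iii) gives $P_kg=g$ on $\omega_k$, and hence $\Pi g=g$ on $Q$ (Remark~\ref{obs: preserva splines en Scero}). Therefore
\[
D^{\bm\alpha}(f-\Pi f)=D^{\bm\alpha}(f-g)-D^{\bm\alpha}\Pi(f-g)\quad\text{on }Q.
\]
The first term is bounded directly by Bramble--Hilbert. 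For the second, write $e=f-g$. On each level-$(\ell+1)$ cell $Q'\subset Q$, the function $\Pi e$ is a polynomial of degree $\le p$ in each variable, hence of total degree at most $dp$. Lemma~\ref{lem: inverse ineq} on $Q'$ (with $r_{Q'}\sim h_\ell$) gives
\[
\|D^{\bm\alpha}\Pi e\|_{q,Q'}\lesssim h_\ell^{-|\bm\alpha|}\|\Pi e\|_{q,Q'}.
\]

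It then remains to bound $\|\Pi e\|_{q,Q}$ by stability. In the first case, Theorem~\ref{teo: properties of P_ell}(iv) gives $\|P_\ell e\|_{q,Q}\le C_S\|e\|_{q,\tilde Q}$. In the second case, apply (iv) twice:
\[
\|P_{\ell+1}(e-P_\ell e)\|_{q,Q}\le C_S\|e-P_\ell e\|_{q,V^{\ell+1}_Q},
\]
and $V^{\ell+1}_Q\subset\tilde Q$ since level-$(\ell+1)$ support extensions are smaller. Together with $\|P_\ell e\|_{q,\tilde Q}\lesssim\|e\|_{q,V^\ell_{\tilde Q}}$, this produces a neighborhood contained in the support extension of $\tilde Q$ at level $\ell$. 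The main obstacle is precisely here: ensuring the domains produced by the nested stability bounds stay within $N(Q)=\widetilde{\parent(Q)}$. I would handle this by bounding $\|e-P_\ell e\|_{q,V^{\ell+1}_Q}$ through the approximation estimate (v) instead, which applies because $V^{\ell+1}_Q\subset\tilde Q\subset\omega_{\ell}$. Failing that, I would use Remark~\ref{parent_extendido_Q_incluido_extendido_Parent_Q} and Proposition~\ref{C_Q*_incluido_extendido_parent_Q} to show the relevant unions of $\mathcal N$-neighborhoods lie in $\widetilde{\parent(Q)}$, absorbing any fixed dilation into the constant if needed. Finally, $\|e\|_{q,N(Q)}\lesssim h_\ell^{p+1}|f|_{p+1,q,N(Q)}$; combined with the inverse inequality, this yields~\eqref{eq: estimate Sobolev}.
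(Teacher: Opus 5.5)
Your proposal has a genuine gap at its first step: the reduction to $\bar\ell\in\{\ell,\ell+1\}$ is false. The second case of the statement, $N(Q)=\widetilde{\parent(Q)}$, is exactly the case of a deactivated cell $Q\subset\Omega_{\ell+1}$. Nothing bounds the number of finer levels present inside $Q$: it may lie in $\Omega_{\ell+m}$, or even in $\omega_{\ell+m}$, for any $m$ up to $n-1-\ell$. Corollary~\ref{cor: estimates_active_cells_by_power} applies the theorem to ancestors $\parent_k(Q)$ of active level-$\ell$ cells, which in general contain active cells several levels finer.

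Your claim that the terms with $k\ge\ell+2$ vanish on $Q$ (and that $\Pi f=P_\ell f$ on children $Q'\not\subset\omega_{\ell+1}$) also misreads Theorem~\ref{teo: properties of P_ell}(ii). That property says $P_kf$ depends only on $f|_{\omega_k}$. The range $\mathcal{S}_{\omega_k}$, however, consists of splines supported in $\Omega_k$, which can meet children of $Q$ even when $Q\not\subset\Omega_{\ell+2}$. So on $Q$ you really have $\Pi e=P_\ell e+\sum_{k=\ell+1}^{n-1}P_k(e-P_{k-1}e)$ with up to $n-1-\ell$ nonvanishing terms. The substance of the theorem is a constant independent of that number.

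Stability alone cannot deliver this. It bounds each term only by $C\|e\|_{q,N(Q)}\lesssim h_\ell^{p+1}|f|_{p+1,q,N(Q)}$, so the sum grows with the number of levels. Once you apply the inverse inequality on the level-$k$ cells where $P_k(\cdot)$ is polynomial, you additionally pick up $h_k^{-|\bm\alpha|}$ with $k$ up to $n-1$.

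The missing idea is how the paper controls the sum. Stability gives $\|P_k(e-P_{k-1}e)\|_{q,Q}\le C_S\|e-P_{k-1}e\|_{q,V^k_Q\cap\omega_k}$, and weak admissibility enters exactly here. Since $\omega_k\subset\omega_{k-1}$, on this set $P_{k-1}g=g$ (Theorem~\ref{teo: properties of P_ell}(iii)), and the approximation estimate (v) for $P_{k-1}$ applies. Hence the $k$-th term is at most $Ch_{k-1}^{p+1}|f|_{p+1,q,\widetilde{\parent(Q)}}$, and the geometric series in $h_{k-1}=2^{\ell-k+1}h_\ell$ sums to $Ch_\ell^{p+1}$.

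Your fallback ``$V^{\ell+1}_Q\subset\tilde Q\subset\omega_\ell$'' is not correct either: $Q\subset\omega_\ell$ only gives $\tilde Q\subset\Omega_\ell$. What puts you inside $\omega_{k-1}$ is the intersection with $\omega_k$ coming from (iv), together with $\omega_k\subset\omega_{k-1}$.

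For derivatives, you are right that $\Pi e$ is not a single polynomial on $Q$ in the second case. The paper's display applying Lemma~\ref{lem: inverse ineq} to $\Pi(Tf-f)$ on all of $Q$ glosses over this. Applying the lemma on children does not suffice in the multilevel case, though. Apply it term by term, $\|D^{\bm\alpha}P_k(\cdot)\|_{q,Q}\le Ch_k^{-|\bm\alpha|}\|P_k(\cdot)\|_{q,Q}$, and use $\sum_{k>\ell}h_k^{-|\bm\alpha|}h_{k-1}^{p+1}\le Ch_\ell^{p+1-|\bm\alpha|}$, which holds because $|\bm\alpha|\le p$.
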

			
			\begin{proof}
				Let $f \in W_{q}^{p+1}(\Omega)$ and $Q \subset \omega_{\ell}$. Let $Tf$ be the averaged Taylor polynomial of degree $p$ of $f$ over $\tilde{Q}$ (cf.~\cite[Definition 1]{S17} or~\cite[Definition 4.3.1]{BS02}). By the triangle inequality, and since $\Pi g=g$ for any $g \in \mathbb{P}_{\mathbf{p}}$, we have
				\begin{equation}\label{eq: desigualdad triang con Taylor}
					\|D^{\bm{\alpha}}(f-\Pi f)\|_{q,Q}\leq \|D^{\bm{\alpha}}(f-T f)\|_{q,Q} + \|D^{\bm{\alpha}}\Pi(Tf- f)\|_{q,Q}=:[A]+[B].
				\end{equation}
				We remark that the constant $C$ appearing in the following estimates may differ at each occurrence. Term [A] is bounded above by using standard error estimates for the Taylor polynomial~\cite[Lemma 4.3.8]{BS02}: 
				\begin{equation}\label{parteA}
					[A]\leq \|D^{\bm{\alpha}}(f-T f)\|_{q,\tilde{Q}}\leq C\;\diam(\tilde{Q})^{p+1-|\bm{\alpha}|}|f|_{p+1,q,\tilde{Q}} \leq C\; h_{\ell}^{p+1-|\bm{\alpha}|}|f|_{p+1,q,\tilde{Q}}.
				\end{equation}
				In the last inequality, we used the fact that $\tilde{Q}$ consists of at most $(p+1)^d$ cells of level $\ell$, implying $\diam(\tilde{Q}) \le C h_\ell$. For [B], since $(\Pi (Tf-f))|_{Q}\in \mathbb{P}_{\mathbf{p}}\subset \mathcal{P}_{s}$ with $s=d \,p$, we apply Lemma~\ref{lem: inverse ineq} to obtain:
				\begin{equation}\label{eq: bound B inverse ineq}
					[B] \leq \frac{C}{h_{\ell}^{|\bm{\alpha}|}}\|\Pi(Tf- f)\|_{q,Q},\qquad \text{for }0\leq|\bm{\alpha}|\leq p.
				\end{equation}
				
				We now analyze the two possible cases for $Q$. If $Q \subset \Omega_\ell \setminus \Omega_{\ell+1}$, then $\Pi f|_Q = P_\ell f|_Q$. Using the stability of $P_\ell$ (Theorem~\ref{teo: properties of P_ell} (iv)) and the Taylor polynomial estimates, \eqref{eq: bound B inverse ineq} becomes:
				\begin{equation*}
					[B] \leq \frac{C}{h_{\ell}^{|\bm{\alpha}|}}\|Tf-f\|_{q,\tilde{Q}\cap \omega_\ell} \leq C\, h_{\ell}^{p+1-|\bm{\alpha}|}\,|f|_{p+1,q,\tilde{Q}},
				\end{equation*}
				which, combined with \eqref{parteA}, proves the result for $N(Q) = \tilde{Q}$.
				
				On the other hand, if $Q\subset\Omega_{\ell+1}$, we use Lemma \ref{teo: Pi on omeguita ell} and the stability of $P_{\ell}$ to obtain:
				\begin{align*}
					[B] &\leq \frac{C}{h_{\ell}^{|\bm{\alpha}|}} \Big\| P_{\ell}(Tf- f)+ \sum_{k=\ell+1}^{n-1} P_{k}\big( Tf-f-P_{k-1}(Tf-f)\big) \Big\|_{q,Q} \\
					&\leq \frac{C}{h_{\ell}^{|\bm{\alpha}|}} \Big( \|Tf- f\|_{q,\tilde{Q}} + \sum_{k=\ell+1}^{n-1} \big\|P_{k}\big( Tf-f-P_{k-1}(Tf-f)\big)\big\|_{q,Q} \Big) \\
					&\leq C h_{\ell}^{p+1-|\bm{\alpha}|}\,|f|_{p+1,q,\tilde{Q}} + \frac{C}{h_{\ell}^{|\bm{\alpha}|}} [I].
				\end{align*}
				We estimate the sum $[I]$ by expressing $Q$ as a union of cells of level $k$ and repeatedly applying stability:
				\begin{align*}
					[I] &\leq \sum_{k=\ell+1}^{n-1} \|Tf-f-P_{k-1}(Tf-f)\|_{q,V_Q^k\cap\omega_k} \\
					&\leq \sum_{k=\ell+1}^{n-1} \Big( \|Tf-P_{k-1}(Tf)\|_{q,V_Q^k\cap\omega_k} + \|P_{k-1}f-f\|_{q,V_Q^k\cap\omega_k} \Big) \\
					&\leq\sum_{k=\ell+1}^{n-1} \|P_{k-1}f-f\|_{q,V_Q^k\cap\omega_{k-1}},
				\end{align*}
			where the last inequality results from $\omega_k\subset\omega_{k-1}$ and $Tf-P_{k-1}(Tf)=0$ in $\omega_{k-1}$ (cf. Theorem~\ref{teo: properties of P_ell} (iii)).
				Since $V_Q^k \subset \tilde{Q}$ for all $k \ge \ell+1$, Theorem~\ref{teo: properties of P_ell} (v) implies:
				\begin{equation*}
					[I] \leq \sum_{k=\ell+1}^{n-1} \|P_{k-1}f-f\|_{q,\tilde{Q}\cap\omega_{k-1}} \leq \sum_{k=\ell+1}^{n-1} C h_{k-1}^{p+1}|f|_{p+1,q,V_{\tilde{Q}\cap\omega_{k-1}}^{k-1}} \leq C h_{\ell}^{p+1}|f|_{p+1,q,V_{\tilde{Q}}^\ell},
				\end{equation*}
				where we bounded the geometric sum using $h_{k}=h_{k-1}/2$. Finally, since $V_{\tilde{Q}}^\ell \subset \widetilde{\parent(Q)}$, we conclude that
				\begin{equation*}
					[B] \leq C\: h_{\ell}^{p+1-|\bm{\alpha}|}\,|f|_{p+1,q,\widetilde{\parent(Q)}},
				\end{equation*}
				completing the proof for $N(Q) = \widetilde{\parent(Q)}$.
			\end{proof}
			
			We emphasize that the constant $C$ in~\Cref{theo: sobolev norm estimation} does not depend on the mesh disparity level, contrasting with~\cite[Theorem 4]{S17}. Consequently, we achieve robust local approximation estimates even when the mesh is not strictly admissible (cf.~\cite[Corollary 1]{S17}), requiring only the weaker hypothesis~\eqref{wahm}.

	We close this section with the following key result.
	
		\begin{corollary}
			\label{cor: estimates_active_cells_by_power}
			Let $Q \in \mathcal{Q}_\ell$ be an active cell. Then,
			\begin{equation*}
				\|D^{\bm{\alpha}}(f-\Pi f)\|_{q,Q}\leq C\;h_Q^{p+1-|\bm{\alpha}|}|f|_{p+1,q, N(Q)},\qquad \forall\,\bm{\alpha},\, 0\leq |\bm{\alpha}|\leq p,
			\end{equation*}
			where the local mesh size $h_Q$ and the neighborhood $N(Q)$ are defined as $h_Q:=h_\ell$ and $N(Q) := \tilde{Q}$ if $Q \in \AAA_\ell(\QQQ)$, and $h_Q:=h_k$ and $N(Q) := \widetilde{\parent_{k-1}(Q)}$ if $Q \in \AAA_k(\QQQ)$ for some $k<\ell$. The constant $C>0$ depends on $p$ and $d$, but is independent of $f$. 
		\end{corollary}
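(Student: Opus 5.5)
The plan is to reduce the corollary to \Cref{theo: sobolev norm estimation}, applied at the level $k$ given by the approximation power of $Q$ rather than at the level $\ell$ of $Q$. Let $Q\in\QQ_\ell$ be active. By \Cref{obs: poder aproximacion activas}, $Q\in\AAA_k(\QQQ)$ for some $k\le\ell$, so we split into the cases $k=\ell$ and $k<\ell$. Throughout, $\QQQ$ is assumed weakly admissible, since \eqref{wahm} is what \Cref{theo: sobolev norm estimation} requires.

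\emph{Case $k=\ell$.} Here $Q\subset\omega_\ell$. Because $Q$ is active, $Q\not\subset\Omega_{\ell+1}$, and $\Omega_{\ell+1}$ is a union of cells of level $\ell$, so $Q\subset\Omega_\ell\setminus\Omega_{\ell+1}$ up to boundaries. \Cref{theo: sobolev norm estimation} then gives \eqref{eq: estimate Sobolev} with $N(Q)=\tilde Q$ and mesh size $h_\ell=h_Q$, which is exactly the claim.

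\emph{Case $k<\ell$.} Let $R:=\parent_k(Q)\in\QQ_k$. By definition of $\AAA_k$, $Q\subset\omega_k$, and since $\omega_k$ is a union of cells of level $k$, also $R\subset\omega_k$. Moreover $k<\ell$ and $Q$ is active of level $\ell$, so $Q\subset\Omega_\ell\subset\Omega_{k+1}$. As $\Omega_{k+1}$ is a union of level-$k$ cells, $R\subset\Omega_{k+1}$. We now apply \Cref{theo: sobolev norm estimation} to the cell $R$ at level $k$, in its second case $N(R)=\widetilde{\parent(R)}=\widetilde{\parent_{k-1}(Q)}$. Restricting the left-hand side from $R$ to $Q\subset R$ only decreases the norm, so
\[
\|D^{\bm\alpha}(f-\Pi f)\|_{q,Q}\le\|D^{\bm\alpha}(f-\Pi f)\|_{q,R}\le C\,h_k^{p+1-|\bm\alpha|}|f|_{p+1,q,\widetilde{\parent_{k-1}(Q)}},
\]
which is the asserted estimate with $h_Q=h_k$.

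The main obstacle is the edge case $k=0$ in the second case, where $\parent_{-1}$ is undefined. I would handle it by reading $\widetilde{\parent_{-1}(Q)}$ as the (at most bounded) patch $\Omega_0$, equivalently the extension of the level-$0$ cell. I would then check that the proof of \Cref{theo: sobolev norm estimation} for $\ell=0$ only uses $V^0_{\tilde R}$, which lies in a fixed neighbourhood of $R$ within $\Omega$. A second point to verify is that the first case of the theorem really is the right one when $k=\ell$. The inclusion $Q\not\subset\Omega_{\ell+1}$ follows from activity, and this suffices because the proof of that case only used $\Pi f|_Q=P_\ell f|_Q$. That identity holds on cells of $\omega_\ell$ not covered by $\omega_{\ell+1}$, via \Cref{teo: Pi on omeguita ell}, as the sum there vanishes on $Q$ by property (ii) of \Cref{teo: properties of P_ell}.
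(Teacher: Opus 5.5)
Your proposal is correct and is essentially the paper's own proof. The case $k=\ell$ is a direct application of \Cref{theo: sobolev norm estimation}. For $k<\ell$ you apply its second case to the deactivated ancestor $\parent_k(Q)\subset\omega_k\cap\Omega_{k+1}$ and pass from $\parent_k(Q)$ to $Q$ by monotonicity of the norm. Your explicit check of these inclusions and your convention for $k=0$ supply details the paper leaves implicit.

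One small correction concerns your closing side check. The terms $P_j(\cdot)$, $j>\ell$, in \Cref{teo: Pi on omeguita ell} vanish on an active $Q$ because $\mathcal{S}_{\omega_j}$ consists of splines supported in $\Omega_j\subset\Omega_{\ell+1}$, which meets $Q$ only on its boundary. This does not follow from property (ii), which is an input-locality statement. Moreover, the identity $\Pi f|_Q=P_\ell f|_Q$ can fail for cells $Q\subset\omega_\ell$ with $Q\not\subset\omega_{\ell+1}$ that nevertheless lie in $\Omega_{\ell+1}$.
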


		\begin{proof}
			The first case is a direct application of Theorem~\ref{theo: sobolev norm estimation} since $Q \subset \omega_\ell$. For the second case, in view of~\Cref{Def: poder de proximacion} and~\Cref{obs: poder aproximacion activas}, let $Q' = \parent_k(Q) \subset \omega_k$ be its deactivated ancestor at level $k < \ell$. Applying~\eqref{eq: estimate Sobolev} to $Q'$ yields:
			\begin{equation*}
				\|D^{\bm{\alpha}}(f-\Pi f)\|_{q,Q} \leq \|D^{\bm{\alpha}}(f-\Pi f)\|_{q,Q'} \leq C\;h_{k}^{p+1-|\bm{\alpha}|}|f|_{p+1,q,\widetilde{\parent(Q')}}.
			\end{equation*}
			
		\end{proof}
		
			This result provides the theoretical basis for our algorithm, as it shows that the local approximation error of an active cell is determined by its approximation power (Definition~\ref{Def: poder de proximacion}). Then, by partitioning the marked cells according to their approximation power, we explicitly distinguish those that already satisfy the optimal bound at their level from those whose error is dominated by a coarser $h_k$ for some $k < \ell$.

	\section{Experimental analysis of refinement strategies}\label{S:tests}

In this section, we illustrate the performance of our adaptive refinement algorithm through several simulations and perform comparisons with the method presented in~\cite{BGV2018}, which preserves strictly admissible meshes of class $2$. 
We consider an adaptive loop of the form:
\begin{center}
	\resizebox{0.8\textwidth}{!}{
		\begin{tikzpicture}[
			node distance=0.8cm, 
			block/.style={
				rectangle, 
				draw=black, 
				thick, 
				fill=blue!5, 
				minimum width=2.2cm, 
				minimum height=1cm, 
				font=\sffamily\small\bfseries,
				align=center
			},
			newblock/.style={
				block,
				fill=orange!20,
				draw=orange!80!black
			},
			arrow/.style={
				-{Stealth[scale=1.2]}, 
				thick
			}
			]
			
			\node[block] (solve) {SOLVE};
			\node[block, right=of solve] (estimate) {ESTIMATE};
			\node[block, right=of estimate] (mark) {MARK};
			\node[newblock, right=of mark] (update) {UPDATE\\MARK}; 
			\node[block, right=of update] (refine) {REFINE};
			
			\draw[arrow] (solve) -- (estimate);
			\draw[arrow] (estimate) -- (mark);
			\draw[arrow] (mark) -- (update);
			\draw[arrow] (update) -- (refine);
			
			\draw[arrow] (refine.south) -- ++(0,-0.8) -| (solve.south) ;
			
		\end{tikzpicture}
	}
\end{center}

We explain each module below:
\begin{itemize}
	\item SOLVE: The discrete solution is computed in the hierarchical space defined over the current mesh.
	
	\item ESTIMATE: Element-wise error estimators, denoted by $\mathcal{E}_Q$, are computed.
	
	\item MARK: We select the active cells of the current hierarchical mesh where the estimators have the largest magnitude using a marking strategy. For our examples, we use the maximum strategy (MS) with parameter $\theta$, which consists of marking the active elements $Q\in\QQQ$ such that
	\(\mathcal{E}_Q \geq \theta \displaystyle\max_{Q'\in\QQQ}\mathcal{E}_{Q'}\).
	
	\item UPDATE MARK: We update the set of marked cells using either the WA or SA2 method, as detailed below:
	\begin{itemize}
		\item WA: Applies \Cref{alg: MARK_WEAKLY} (or equivalently, \Cref{alg:compute_marked_elements}), which redefines the set of marked cells to preserve the weakly admissible mesh structure.
		\item SA2: Applies the recursive marking algorithm presented in~\cite[Algorithm 5]{BGV2018}, which determines the cells that must be marked to preserve the strictly admissible mesh structure of class $2$.
	\end{itemize}
	
	\item REFINE: The current hierarchical mesh is refined through the process described in \Cref{def: construccion de refinamiento}, using the active cells marked in the previous step.
\end{itemize}

\subsection{Experimental analysis of a single adaptive step}
Let us compute the $L^2$-projection of a function $f$ onto the hierarchical spline space $\Span\mathcal{H}$. The approximation $s \in \Span\mathcal{H}$ is obtained by solving the standard linear system arising from the orthogonality condition: $\int_\Omega (s-f)g \, dx = 0,$ for all $g \in \Span\mathcal{H}$.

To compare the WA and SA2 methods, we employ the element-wise error estimators $\mathcal{E}_Q = \|f-s\|_{L^2(Q)}$ on the active cells $Q$ of the hierarchical mesh. We evaluate the performance of both algorithms over the region $M$ of initially marked cells, defined as \(M := \bigcup \{Q\,\mid\,Q\in\MM_\ell,\\ \ell=0,\dots,n-1\}\).
The error reduction index \(I_{\text{err}} := \log(e_0/e_1)\)
roughly measures the digits of accuracy gained in $M$, using the errors $e_0=\|f-s_0\|_{L^2(M)}$ and $e_1=\|f-s_1\|_{L^2(M)}$ calculated before and after a refinement step. On the other hand, the efficiency index \(I_{\text{eff}} := \frac{I_{\text{err}}}{ \log\left(\frac{\#\HH_1}{\#\HH_0}\right)}\) quantifies the magnitude of the error decay rate with respect to the added degrees of freedom (DOFs).

Below, we present some concrete examples. In each case, we start with an initial space of bicubic splines of maximum smoothness associated with a strictly admissible mesh of class $2$, where the hierarchical subdomains ${\Omega_\ell}$ are clustered (see \Cref{Def: Jerearquia subdominio agrupada}).
Starting from the MARK module and concluding at ESTIMATE, we perform a single iteration of the adaptive loop. The marked cells are updated using both the WA and SA2 methods; for each case, we report the resulting refined meshes along with their efficiency and error reduction indices

\begin{example}\label{ej: funcion diagonal}
	Let us consider $f(x,y)= \arctan(25(x-y))$ over $\Omega = [0,1]\times[0,1]$. \Cref{fig:meshes_comparacion_diagonal} shows the plot of $f$ and the marked region $M$ obtained using $\theta = 0.7$ in the maximum strategy on the initial mesh. Note that in this example, this strategy has marked a region $M$ of optimal cells at the finest level. \Cref{fig:meshes_comparacion_diagonal} displays the meshes obtained using the two approaches for the UPDATE MARK module. With the WA method, a new hierarchical space is obtained where all necessary B-spline functions of the finest level were added so that region $M$ is contained in $\omega_{3}$. With the SA2 method, only the marked cells were refined, as this is sufficient to maintain the strictly admissible class $2$ mesh structure.
	
	\begin{figure}[h]
		\includegraphics[width=.32\textwidth]{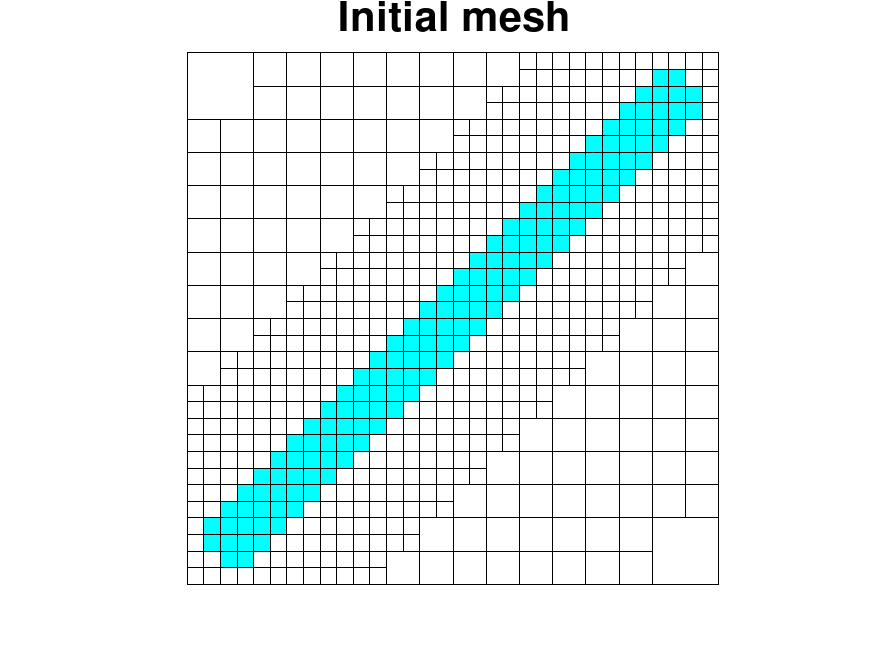}
		\includegraphics[width=.32\textwidth]{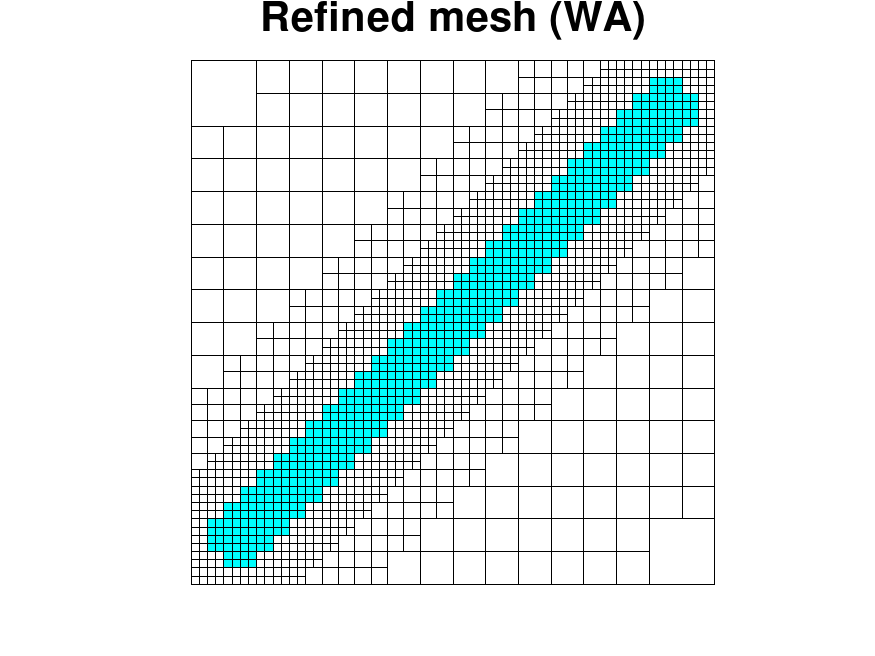}
		\includegraphics[width=.32\textwidth]{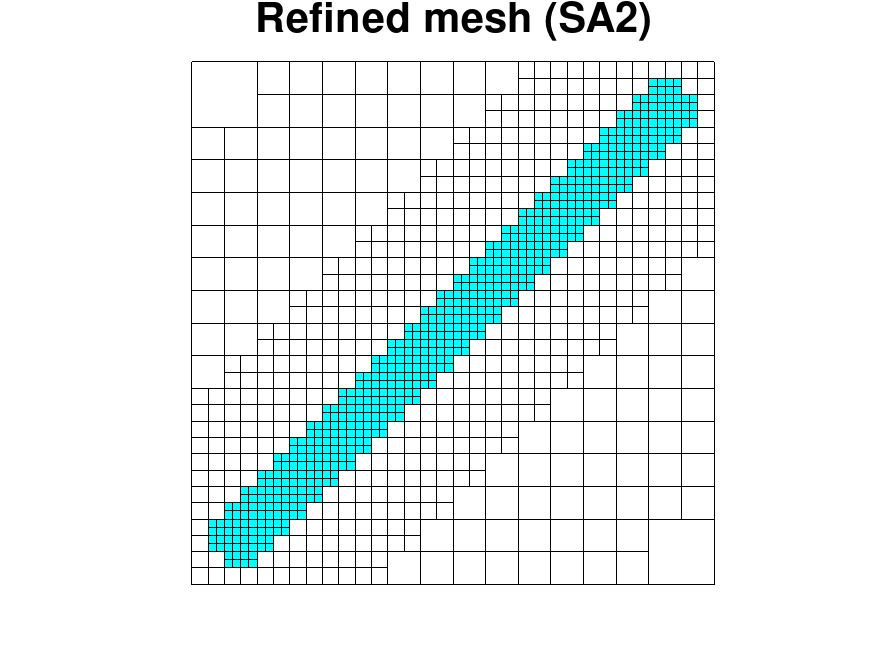}
		\caption{Initial hierarchical mesh and refined meshes obtained via the WA and SA2 methods for \Cref{ej: funcion diagonal}. The shaded region indicates the initially marked elements where an improvement of the approximation is sought.}
		\label{fig:meshes_comparacion_diagonal}
	\end{figure}
	\begin{table}[h]
		\small
		\centering
		\begin{tabular}{ccccc}
			\toprule
			Method & DOFs & $L^2(M)$-error & $I_{\text{eff}}$ & $I_{\text{err}}$\\
			\midrule
			-- & 757  & $1.68\times 10^{-3}$ & --   & --   \\
			WA      & 1771 & $5.75\times 10^{-5}$ & 3.97 & 1.47 \\
			SA2     & 974  & $6.73\times 10^{-4}$ & 3.63 & 0.40 \\
			\bottomrule
		\end{tabular}
		\caption{Comparison of efficiency and error reduction indices between the proposed method (WA) and strictly admissible refinement (SA2) for \Cref{ej: funcion diagonal}.}
		\label{tabla: Error vs DOFs diagonal un paso}
	\end{table}
	
	The first row of \Cref{tabla: Error vs DOFs diagonal un paso} shows the number of degrees of freedom (DOFs) of the initial mesh and the $L^2$ error in the marked region. The second and third rows report the information after refinement using the WA and SA2 methods, respectively. Based on the obtained error indices, we conclude that in this case, WA reduces the error in the region of interest to a greater extent than SA2 (one additional decimal digit), despite presenting comparable efficiency indices. 
\end{example}

\begin{example}\label{ej: funcion gauss}
	We consider $f(x,y)=\exp{\left(\frac{-((x - 0.5)^2 + (y - 0.5)^2)}{(2(0.1)^2)}\right)}$ over $\Omega = [0,1]\times[0,1]$. To illustrate the behavior of the refinement methods, we examine two different initial mesh configurations. For the first one, shown in \Cref{fig:meshes_comparacion_centro_05}, the marking strategy with parameter $\theta = 0.5$ identifies a region predominantly composed of optimal cells. As can be observed in \Cref{tab:comparativa_mallas_a}, the error index demonstrates that WA reduces the error in the region of interest more effectively than SA2.

	\begin{figure}[h]
		\includegraphics[width=.32\textwidth]{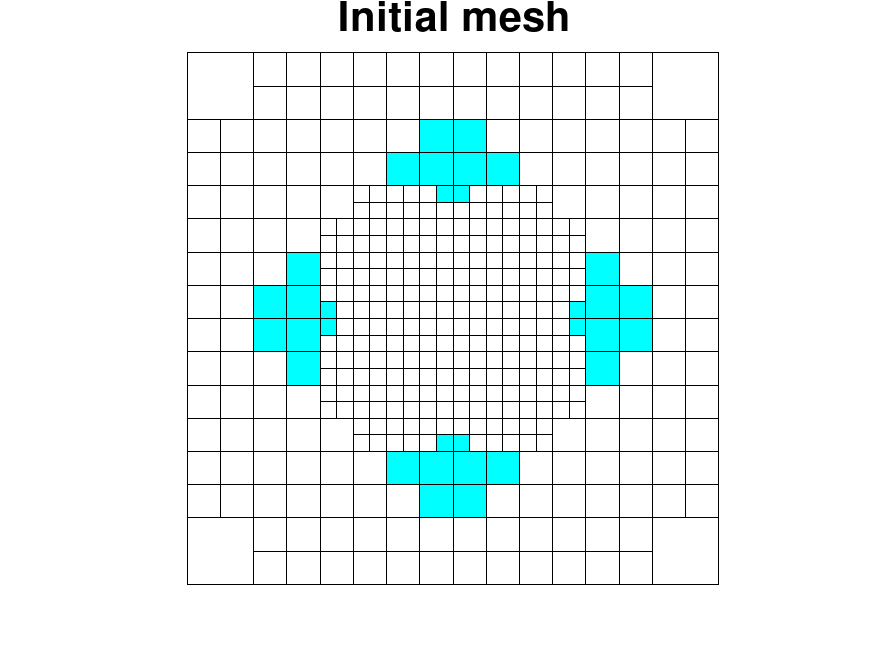}
		\includegraphics[width=.32\textwidth]{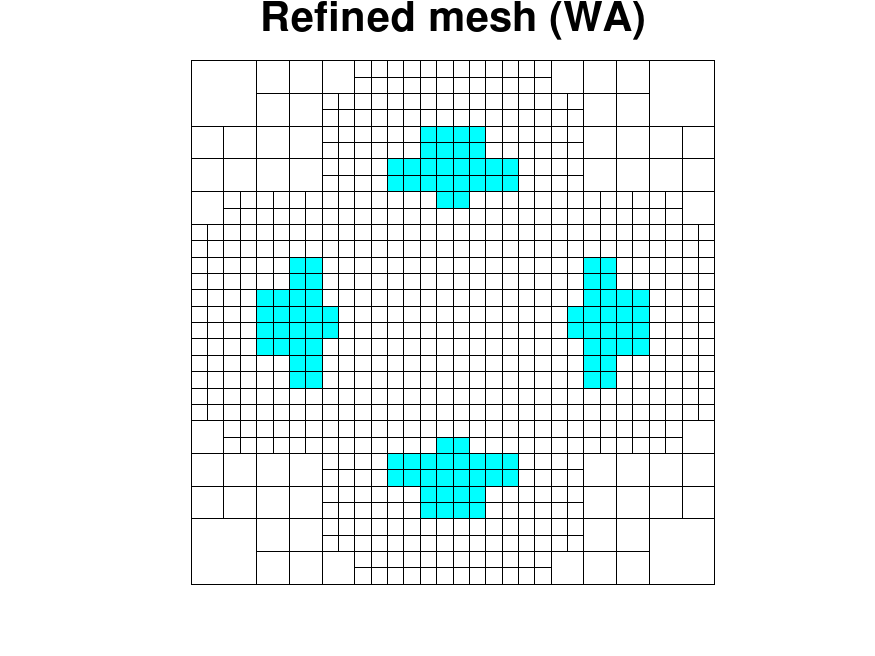}
			\includegraphics[width=.32\textwidth]{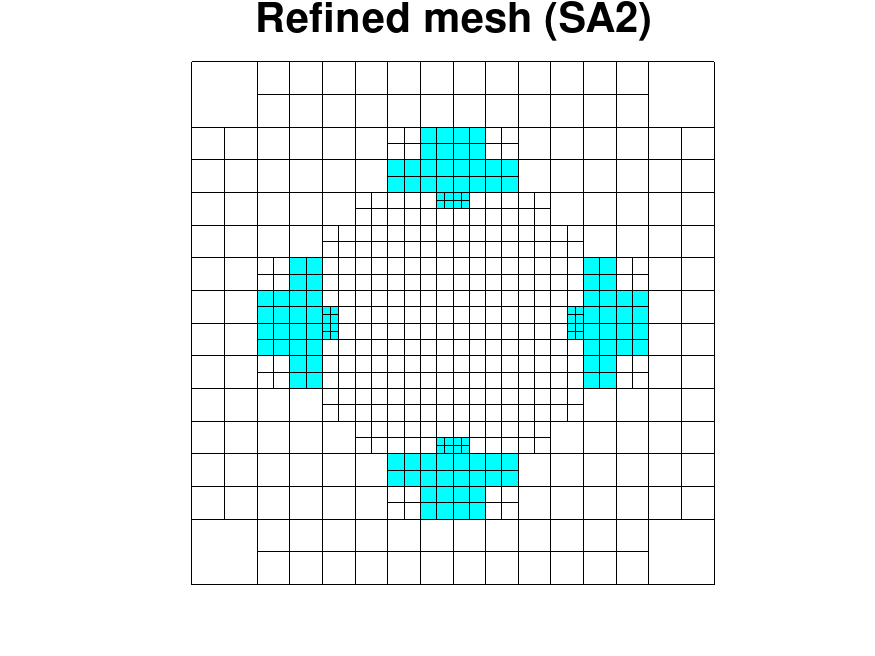}
		\caption{A initial hierarchical mesh and refined meshes obtained via the WA and SA2 methods for \Cref{ej: funcion gauss}. The shaded region indicates the initially marked elements where an improvement of the approximation is sought.}
		\label{fig:meshes_comparacion_centro_05}
	\end{figure}

	\begin{table}[h]
		\small
		\centering
		\vspace{0.3cm} 
		
		\begin{subtable}{0.48\textwidth}
			\centering
			\small 
			\begin{tabular}{ccccc}
				\toprule
				Method & DOFs & $L^2(M)$-error & $I_{\text{eff}}$ & $I_{\text{err}}$\\
				\midrule
				-- & 457  & $1.47\times 10^{-5}$ & --   & --   \\
				WA      & 853 & $7.77\times 10^{-7}$ & 4.71 & 1.27 \\
				SA2     & 529  & $4.56\times 10^{-6}$ & 7.99 & 0.50 \\
				\bottomrule
			\end{tabular}
			\caption{Corresponding to the mesh layout shown in \Cref{fig:meshes_comparacion_centro_05}}
			\label{tab:comparativa_mallas_a} 
		\end{subtable}
		\hfill
		\begin{subtable}{0.48\textwidth}
			\centering
			\small
			\begin{tabular}{ccccc}
				\toprule
				Method & DOFs &  $L^2(M)$-error &  $I_{\text{eff}}$ & $I_{\text{err}}$\\
				\midrule
				-- & 253  & $4.80\times 10^{-5}$ & --   & --   \\
				WA      & 601 & $5.40\times 10^{-6}$ & 2.52 & 0.94 \\
				SA2     & 513  & $1.08\times 10^{-5}$ & 2.10 & 0.64 \\
				\bottomrule
			\end{tabular}
			\caption{Corresponding to the mesh layout shown in \Cref{fig:meshes_comparacion_centro_07}}
			\label{tab:comparativa_mallas_b} 
		\end{subtable}
		
		\caption{Comparison of efficiency and error reduction indices between the proposed method (WA) and strictly admissible refinement (SA2) for \Cref{ej: funcion gauss}.}
		\label{tabla:comparativa_mallas} 
	\end{table}

Next, we consider a different layout of active cells in the initial mesh, as shown in \Cref{fig:meshes_comparacion_centro_07}. For this second setup, the marked region contains a higher number of suboptimal cells. Nevertheless, the results in \Cref{tab:comparativa_mallas_b} confirm that the WA method still exhibits superior error reduction compared to the SA2 method.

	\begin{figure}[h]\centering
		\includegraphics[width=.32\textwidth]{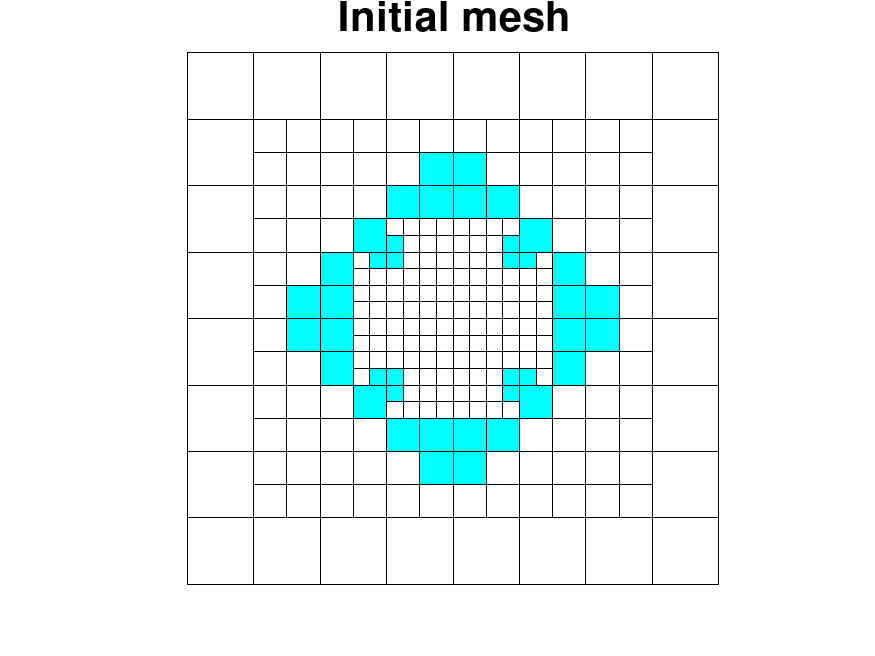}
		\includegraphics[width=.32\textwidth]{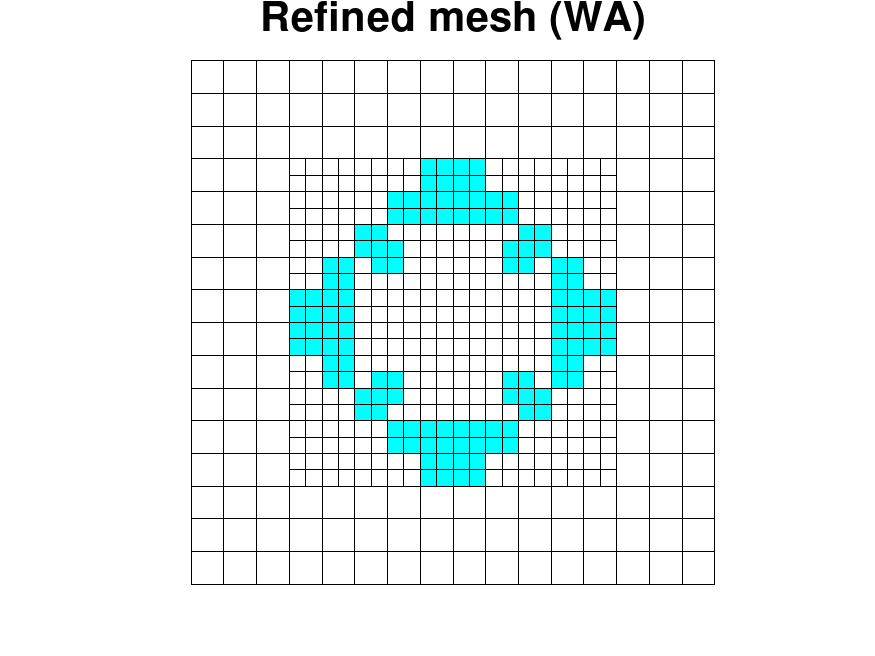}
		\includegraphics[width=.32\textwidth]{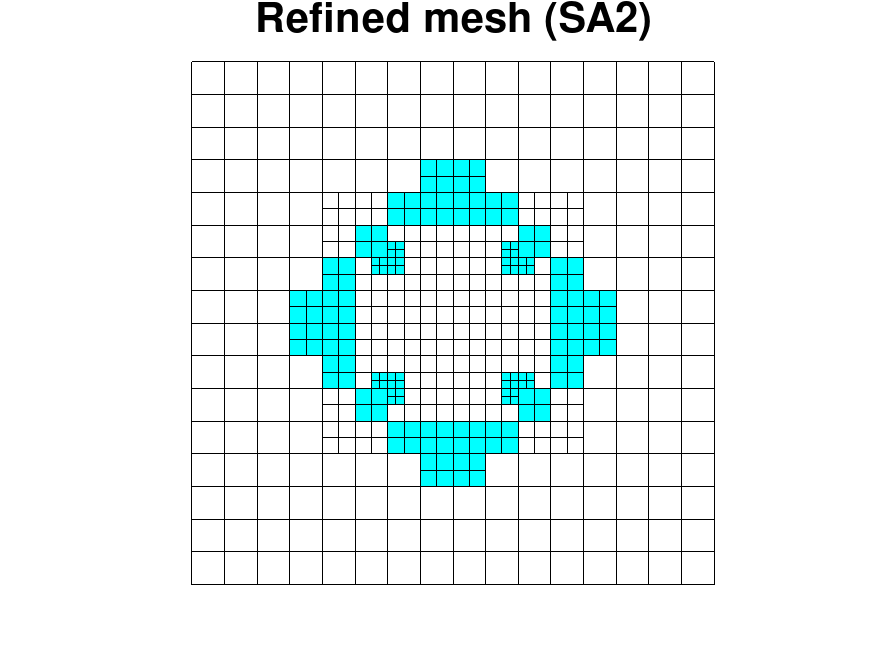}
		\caption{A initial hierarchical mesh and refined meshes obtained via the WA and SA2 methods for \Cref{ej: funcion gauss}. The shaded region indicates the initially marked elements where an improvement of the approximation is sought.}
		\label{fig:meshes_comparacion_centro_07}
	\end{figure}

\end{example}

\subsection{Experimental analysis of the full adaptive cycle}

Let us consider the Poisson equation with Dirichlet boundary conditions given by
\begin{equation}\label{eq: ecuacion Poisson}
	\begin{cases}
		-\Delta u = f \quad &\text{in}\; \Omega \\
		u= g \quad &\text{on}\; \partial \Omega
	\end{cases}
\end{equation}
where $f$ and $g$ are given and $\Omega$ is the unit square $[0,1]\times[0,1]$. We want to solve a Galerkin discretization in a hierarchical spline space to approximate the weak solution $u$ of~\eqref{eq: ecuacion Poisson}. Once the discrete solution is computed, we use the element-based residual error estimators $\mathcal{E}_Q$ introduced in~\cite{BGi15}.

Next, we present a comparative performance analysis of the adaptive loop when the UPDATE MARK module utilizes the WA method on the one hand, and SA2 on the other.

\begin{example}\label{ej: funcion diagonal poisson}
	Let $f$ and $g$ be such that $u(x,y)= \arctan(25(x-y))$ is the solution to problem \eqref{eq: ecuacion Poisson}. We consider a bicubic tensor-product spline space of maximum smoothness whose mesh $\QQQ_0$ consists of an $8\times8$ cell layout. We also set the parameter $\theta = 0.5$ for the maximum strategy.

	\begin{figure}[h]
		\centering
		\includegraphics[width=.4\textwidth]{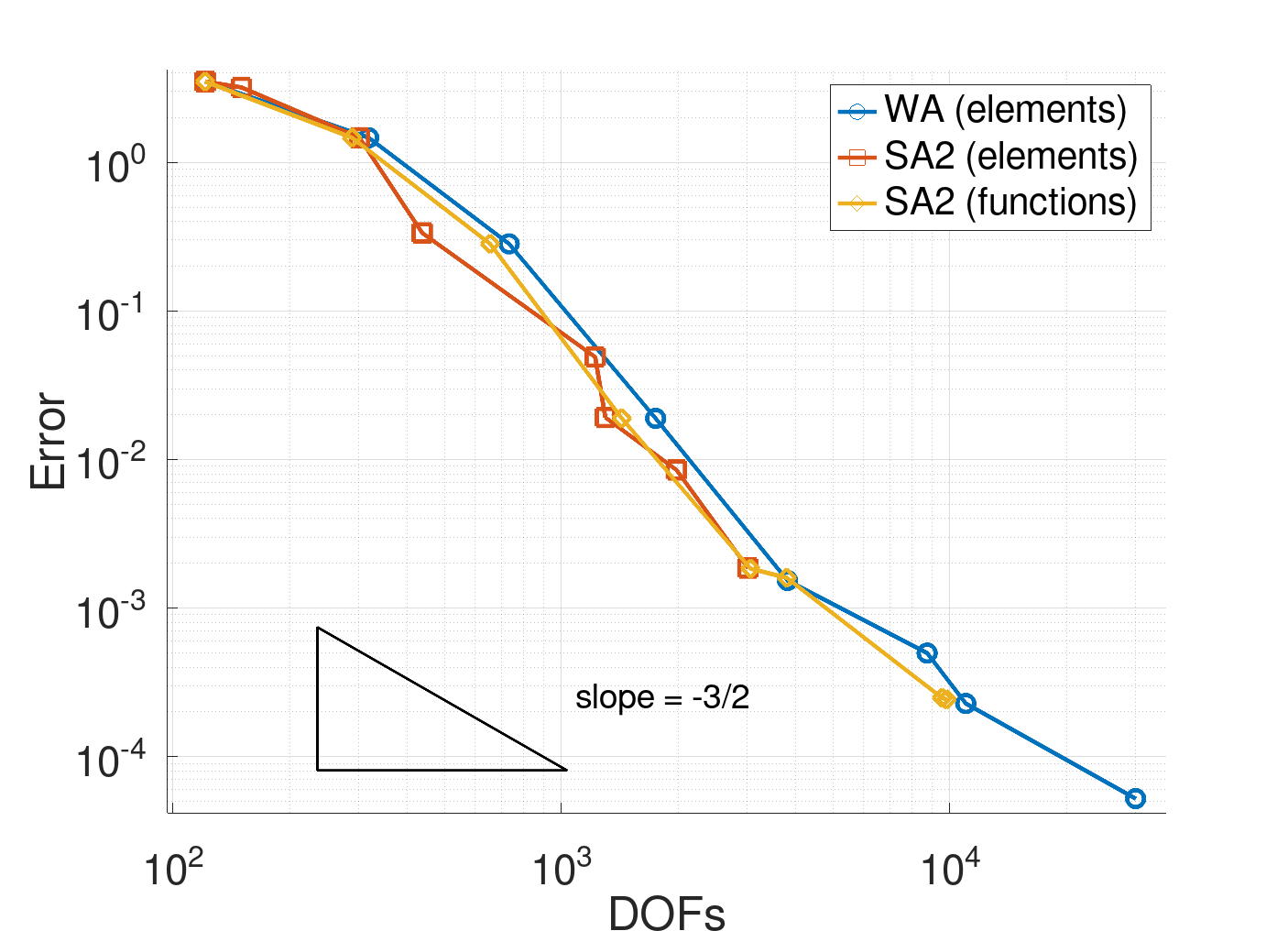}
		\caption{Decay of the energy norm error versus degrees of freedom for the WA and SA2 methods with element-based error estimators, and for the SA2 method with function-based error estimators.}
		\label{fig: Error_vs_DOFS_diagonal_2}
	\end{figure}
	
	Independent runs of 8 iterations were performed for both methods, yielding an optimal experimental order of convergence. As illustrated in \Cref{fig: Error_vs_DOFS_diagonal_2}, reaching an error of approximately $2\times 10^{-3}$ requires a similar number of degrees of freedom (3000--4000 DOFs) in both cases; however, the WA-based refinement achieves this level in only 5 iterations. 
	
	\Cref{fig: Mallas_EA2_DA_diagonal_1} displays the meshes obtained with both procedures at the iteration where they reach a similar error: iteration $5$ for WA and iteration $8$ for SA2. We can observe that in the hierarchical mesh obtained with the SA2 method, there are finest-level cells that are somewhat isolated and do not significantly enrich the space. Conversely, with the WA method, all hierarchical subdomains $\Omega_\ell$ are unions of B-spline supports from the previous level. This occurs because the WA method guarantees that the refined mesh correspond to a clustered hierarchy of subdomains (see \Cref{Def: Jerearquia subdominio agrupada}). 	
	
	In light of these observations, we explore an alternative to ensure that SA2 meshes are also clustered at each iteration. By using the function-based error estimators presented in~\cite{BG18_aposteriori}, we ensure that the set of marked cells is the union of B-spline supports at each level. This leads to a significant improvement in the SA2 method; as shown in \Cref{fig: Error_vs_DOFS_diagonal_2}, it now reaches an error nearly an order of magnitude lower than the element-based version after 8 iterations.

	Note that at iteration 5, both the function-based SA2 and WA exhibit similar errors and DOFs. However, at iteration 6, WA continues to effectively reduce the error, while SA2 refines very little and, consequently, no significant error reduction is achieved.
	
	\begin{figure}[ht]
		\centering
		\hspace*{-1.7cm}
		\includegraphics[width=.55\textwidth]{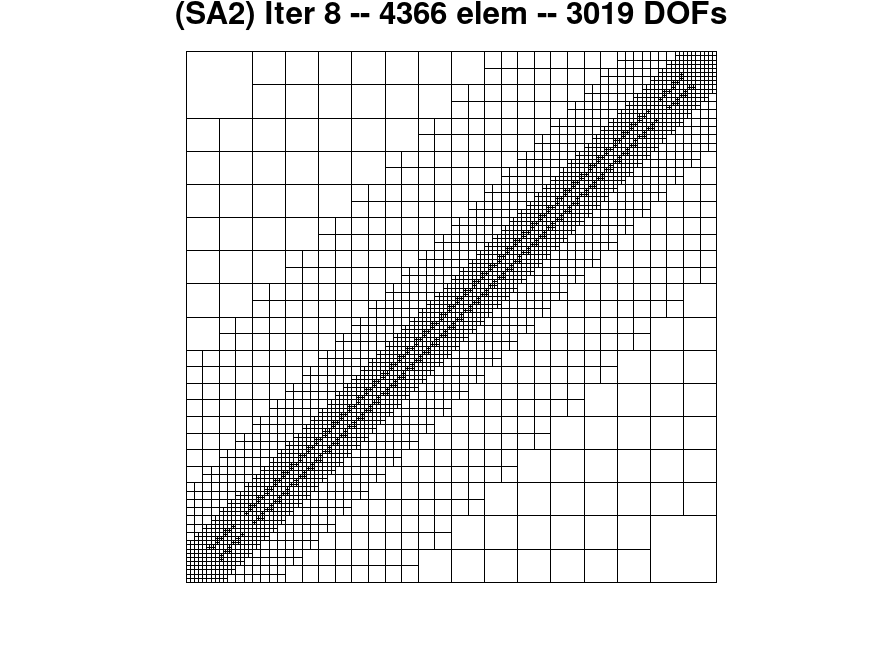}
		\hspace*{-1.5cm}
		\includegraphics[width=.55\textwidth]{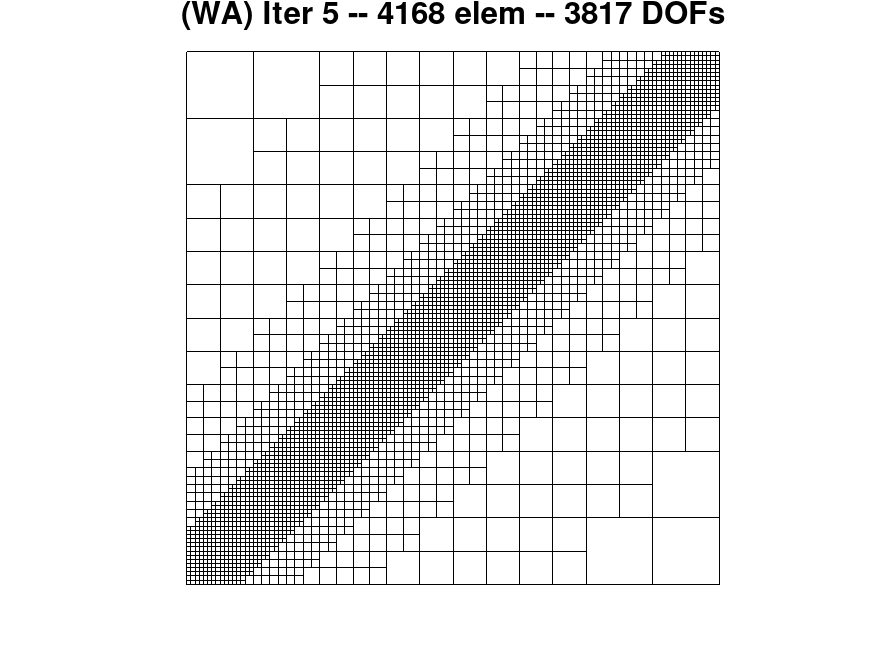}
	\caption{Hierarchical meshes for \Cref{ej: funcion diagonal poisson}. Left: iteration 8 of the SA2 method. Right: iteration 5 of the WA method. Both cases achieve a similar error, number of elements and degrees of freedom.}
		\label{fig: Mallas_EA2_DA_diagonal_1}
	\end{figure}

\end{example}

	\section*{Acknowledgements}
	This work was partially supported by Universidad Nacional del
	Litoral through grant CAI+D-2024 85520240100018LI, and by Agencia Nacional de Promoción Científica y Tecnológica through grant PICT-2020-SERIE A-03820 (Argentina). This support is gratefully acknowledged.
	

\def\cprime{$'$}

\end{document}